\let\counterwithin\Relax
\DeclareSymbolFont{AMSb}{U}{msb}{m}{n}
\DeclareMathSymbol{\N}{\mathbin}{AMSb}{"4E}
\DeclareMathSymbol{\Z}{\mathbin}{AMSb}{"5A}
\DeclareMathSymbol{\R}{\mathbin}{AMSb}{"52}
\DeclareMathSymbol{\Q}{\mathbin}{AMSb}{"51}
\DeclareMathSymbol{\C}{\mathbin}{AMSb}{"43}
\newcommand{\ee}{\textrm{e}}
\newcommand{\ii}{\textrm{i}}
\newcommand{\Ai}{\mathrm{Ai}}
\newcommand{\pone}{P1}
\newcommand{\ptwo}{P2}
\newtheorem{lemma}{Lemma}[section]
\newtheorem{theorem}[lemma]{Theorem}
\newtheorem{proposition}[lemma]{Proposition}
\newtheorem{corollary}[lemma]{Corollary}
\newcommand{\Pstat}{\mathcal{P}_{\rm stat}} %
\newcommand{\Pexit}{\mathcal{P}_{\rm exit}} %
\newcommand{\Pentr}{\mathcal{P}_{\rm entr}} %
\newcommand{\Pendp}{\mathcal{P}_{\rm endp}} %
\newcommand{\cV}{\mathcal{V}}
\title{
Numerical evaluation of oscillatory integrals via automated steepest descent contour deformation} 
\author{%
A.\ Gibbs\footnotemark[1], %
D.\ P.\ Hewett\footnotemark[1], 
D.\ Huybrechs\footnotemark[2], 
 \\[6pt]
\footnotemark[1] Department of Mathematics, University College London, London, UK\\
\footnotemark[2] Department of Computer Science, KU Leuven, Leuven, Belgium
}
\date{}
\begin{document}
	
	\maketitle
	\begin{abstract}
		Steepest descent methods combining complex contour deformation with numerical quadrature provide an efficient and accurate 		%
		approach for the evaluation of highly oscillatory integrals. 
However, unless the phase function governing the oscillation is particularly simple, their application requires a significant amount of a priori analysis and expert user input, to determine the appropriate contour deformation, and to deal with the non-uniformity in the accuracy of standard quadrature techniques associated with the coalescence of stationary points (saddle points) with each other, or with the endpoints of the original integration contour. 
In this paper we present a novel algorithm for the numerical evaluation of oscillatory integrals with general polynomial phase functions, which automates the contour deformation process and avoids the difficulties typically encountered with coalescing stationary points and endpoints. 
The inputs to the algorithm are simply the phase and amplitude functions, the endpoints and orientation of the original integration contour, and a small number of numerical parameters. 
By a series of numerical experiments we demonstrate that the algorithm is accurate and efficient over a large range of frequencies, even for examples with a large number of coalescing stationary points and with endpoints at infinity. As a particular application, we use our algorithm to evaluate cuspoid canonical integrals from scattering theory. A Matlab implementation of the algorithm is made available and is called PathFinder.
	\end{abstract}

\section{Introduction}\label{sec:intro}
	
In this paper we consider numerical evaluation of the integral
\begin{equation}\label{eq:I}
I = \int_\Gamma f(z)\e^{\ii\omega g(z)}\dd{z},
\end{equation}
where $\Gamma$ is a contour in $\C$, possibly starting and/or ending at infinity, $f$ and $g$ are functions of a complex variable, and $\omega>0$ is a frequency parameter. 
Such integrals arise in numerous application areas, particularly in wave phenomena and quantum mechanics, and are generally challenging to evaluate numerically, especially when $\omega$ is large, because the presence of the exponential factor $\e^{\ii\omega g(z)}$ means that the integrand may undergo rapid oscillations and/or significant variations in amplitude along the integration contour. 

{The numerical evaluation of oscillatory integrals is a well-studied topic, and a number of different approaches have been developed for integrals of the form \eqref{eq:I}, applicable under certain assumptions on $f$, $g$ and $\Gamma$. For an overview of the field and links to relevant literature we refer the reader to \cite{DeHuIs:18} and \cite[\S36.15]{DLMF}.}

When $f$ and $g$ are analytic, Cauchy's theorem provides the possibility of deforming the integration contour so as to make numerical evaluation easier. This is the basis of \emph{steepest descent (SD) methods}, in which one aims to deform $\Gamma$ onto a contour, or, more typically, a union of contours, which we term the \emph{steepest descent (SD) deformation}, on which $\Re[g(z)]$ is constant, so that the exponential factor $\e^{\ii\omega g(z)}$ is no longer oscillatory. By the Cauchy-Riemann equations, these contours coincide with the steepest descent curves of $-\Im[g(z)]$, and they connect endpoints of the original integration contour, valleys at infinity (sectors in which the integrand decays rapidly as $|z|\to\infty$), and \textit{stationary points }of $g$, which are points $\xi\in\C$ at which $g'(\xi)=0$. 
\footnote{Stationary points are often referred to as ``saddle points'' because they are saddle points of the functions $\Im[g(z)]$ and $\Re[g(z)]$, which cannot possess local maxima or minima by the maximum modulus principle.}  Along each SD contour, away from stationary points the integrand typically decays exponentially, with the rate of decay increasing with increasing $\omega$, 
and as $\omega\to\infty$ the value of the integral is dominated by local contributions close to the endpoints of $\Gamma$ and any stationary points traversed by the SD deformation. 
In the \emph{asymptotic} steepest descent method (described e.g.\ in \cite{bleistein1986asymptotic,wong2001asymptotic}), one exploits this to obtain an asymptotic expansion for the integral, valid as $\omega\to\infty$, by performing a local Taylor expansion of the integrand around the endpoints and relevant stationary points, and reducing the local integrals along the SD contours to simpler integrals that can be expressed in terms of known special functions. 

In the \emph{numerical} steepest descent (NSD) method (described e.g.\ in \cite[\S5]{DeHuIs:18}) one evaluates the integrals along the SD contours numerically. This involves numerically tracing an appropriate segment of each SD contour in the SD deformation %
and applying suitable numerical quadrature rules to evaluate the associated contributions to the integral. 

In principle, NSD is a highly accurate and efficient method for evaluating integrals of the form \eqref{eq:I} for moderate or large $\omega$. 
Indeed, under appropriate assumptions, the NSD method outputs approximations which, for a fixed number of quadrature points $N$, are asymptotic to \eqref{eq:I} as $\omega\to\infty$, 
with the asymptotic accuracy improving with increasing $N$ (see, e.g., \cite[Thm~5.7]{DeHuIs:18}).
Furthermore, if $f$ and $g$ are sufficiently well behaved it can also be the case that the NSD approximation converges to \eqref{eq:I} as $N\to\infty$, for fixed $\omega>0$, with a cost that remains bounded 
as $\omega\to\infty$. %

In practice, however, applying the NSD method to an integral of the form \eqref{eq:I} often requires significant expert user input.
This is because: %
\begin{enumerate}
\item[(\pone)] Determining the SD contour deformation corresponding to a given $g$ and $\Gamma$ requires careful a priori analysis. 
\item[(\ptwo)] 
Parametrizing SD contours from or near stationary points, and evaluating integrals along them, is fraught with numerical difficulties, 
especially 
when stationary points are close to other stationary points or endpoints of $\Gamma$.
\end{enumerate}
The issues described in (\pone) and (\ptwo) are particularly troublesome when one wishes to evaluate \eqref{eq:I} for multiple instances of a phase function $g(z)=g(z,\mathbf{c})$ depending on a set of parameters $\mathbf{c}\in \C^q$. This is because, firstly, the number and location of the stationary points, and the nature of the SD deformation, have to be determined for each different value of $\mathbf{c}$, and, secondly, stationary points may coalesce as $\mathbf{c}$ approaches certain regions in parameter space, leading to a non-uniformity in the accuracy of the resulting NSD approximations. 

The problem of stationary point coalescence in the context of NSD was studied in detail in \cite{HuJuLe:19} in the special case of the cubic phase function $g(z,c)=\frac{z^3}{3}-cz$, for $c\in \C$, which for $c\neq 0$ has a pair of order one stationary points which coalesce as $c\to0$ (at $z=0$) into a single stationary point of order two for $c=0$.\footnote{The \textit{order} of a stationary point $\xi$ is the multiplicity of $\xi$ as a root of $g'$.} 
In this case, the SD deformation and contour parametrization were carried out manually by analytically inverting the phase (illustrating (P1)), but the resulting integrals were found to be nearly singular for small $c$, leading to poor accuracy of standard NSD approximations (illustrating (P2)). 
It was shown in \cite{HuJuLe:19} how to construct a family of non-standard quadrature rules for this integral which perform uniformly well for $c\approx 0$ using complex-valued Gaussian quadrature, producing quadrature nodes that in general lie off the SD deformation. In principle, similar rules could be developed for more complicated coalescences involving higher order stationary points and/or endpoints of $\Gamma$. However, for each type of coalescence a bespoke quadrature rule would have to be developed, and a general catalogue of such rules is not yet available in the literature.

In contrast to \cite{HuJuLe:19}, our aim is not to develop an optimized method for a specific instance of \eqref{eq:I}, but rather to present a relatively simple algorithm that can evaluate \eqref{eq:I} accurately, for a general class of $f$ and $g$, without the need for expert user input or a priori analysis, even in the case of coalescing stationary points, thus addressing problems (P1) and (P2). 
Our specific focus in this paper is on the case where $f$ is entire and $g$ is a polynomial. The extension of our approach to more general cases where $f$ and/or $g$ have pole or branch point singularities 
is the subject of ongoing research.\footnote{{We note that NSD-based methods for cases where $f$ has a singularity at or close to the endpoint of the integration contour have been presented previously - see e.g.\ \cite{huybrechs2007sparse,gibbs2020fast}, where such methods are applied in the context of integral equation methods for high frequency scattering.}} 
Necessarily, in aiming for generality and robustness we will sacrifice some efficiency. 
{In particular, in contrast to standard NSD, the error in the approximations produced by our algorithm does not in general decay as $\omega\to\infty$.} 
Nonetheless, our {algorithm} is designed to be rapidly convergent as $N\to\infty$ with approximately $\omega$-independent {error} and {$\omega$-independent computational cost}, and the fact that this is realised in practice is demonstrated by extensive numerical experiments in \S\ref{sec:Numerics}. %

Our algorithm follows the basic principles of NSD, combining complex contour deformation with numerical quadrature. However, in contrast to standard NSD our algorithm does not trace SD contours directly from stationary points. Instead, stationary points are enclosed in a bounded ``non-oscillatory region'' within which the integrand is guaranteed to undergo at most a fixed number of oscillations. The original contour $\Gamma$ is replaced by a ``quasi-SD deformation'' comprising a union of straight-line contours in the non-oscillatory region, for which numerical quadrature is straightforward, and SD contours outside the non-oscillatory region, on which standard NSD quadrature techniques can be applied. 
By excluding a neighbourhood of the stationary points from the region in which SD contours are traced, our algorithm avoids the problems mentioned in (\ptwo) associated with stationary-point/stationary-point and/or stationary-point/endpoint coalescence. This not only ``uniformizes'' the accuracy of our algorithm compared to standard NSD, but it also enables us to tackle the problem (\pone) by automating the contour deformation step. For the latter, we first perform low accuracy SD contour tracing outside the non-oscillatory region to build a graph describing the global connections (via SD contours) between the endpoints of $\Gamma$, the different components of the non-oscillatory region, and the valleys at infinity, and then determine the quasi-SD deformation using a shortest path algorithm, before refining the accuracy of the SD contour tracing at the quadrature stage.  

One other problem with standard NSD is that it typically degenerates as $\omega\to 0$, because the quadrature points diverge to infinity \cite[\S5.2.4]{DeHuIs:18}. This issue has been addressed in the special case $g(z)=z$ for bounded $\Gamma$ in \cite{asheim2013gaussian,celsus2021kissing}; however, it remains an open problem for general $g(z)$. 
Our algorithm is well-behaved in the limit as $\omega\to 0$ for general polynomial $g(z)$, since it reduces to standard non-oscillatory quadrature for sufficiently small $\omega$ for any bounded $\Gamma$. 

Our algorithm is implemented in the open-source Matlab code ``PathFinder'', available at \textsf{github.com/AndrewGibbs/PathFinder} \cite{PathFinder}. The basic user input to the code is a function handle for the amplitude \texttt{f}, the coefficients of the polynomial phase \texttt{g}, endpoints \texttt{a} and \texttt{b} (complex numbers, or angles in the case of infinite endpoints), the frequency parameter \texttt{omega}, and a parameter \texttt{N} controlling the number of quadrature points to be used. Approximating the integral \eqref{eq:I} using PathFinder can be done with the following Matlab command:
\begin{align}
\label{eq:PathFinderCode}
\texttt{PathFinder(a,b,f,g,omega,N,\textquotesingle infcontour\textquotesingle,[A B])}
\end{align}
Here \texttt{\textquotesingle infcontour\textquotesingle} is an optional input for which the user should supply a Boolean array \texttt{[A B]} (whose default value is \texttt{[false false]}) such that \texttt{A} (respectively \texttt{B}) is  \texttt{true} if the endpoint \texttt{a} (resp.\ \texttt{b}) is infinite and \texttt{false} if it is finite. Examples of PathFinder code will be given in \S\ref{sec:Numerics}.  
Advanced users can also adjust a small number of other tuning parameters, whose role will be discussed during the presentation of our algorithm. 

An outline of the paper is as follows. 
In \S\ref{sec:Algorithm} we provide a detailed description of our algorithm, first presenting an overview of the main steps, and then providing details of how each step is realised in PathFinder. 
In \S\ref{sec:Theory} we present some theoretical results underpinning our approach. 
In \S\ref{sec:Details} we discuss some further implementation details, and  
in \S\ref{sec:Numerics} we exhibit numerical results demonstrating the performance of our algorithm on a range of challenging integrals with large $\omega$ and complicated stationary point configurations.

We end this introduction by remarking that integrals with coalescing stationary points are of fundamental importance in numerous applications, including the study of optics and high frequency (short wavelength) acoustics, where they describe the wave field in the vicinity of geometrical singularities (or ``catastrophes'') in the classical ray-theoretic framework, Kelvin's celebrated ship-wave problem, and the theory of molecular collisions in quantum mechanics and theoretical chemistry. 
A catalogue of such integrals, along with links to relevant literature, can be found in 
\cite[\S36]{DLMF}. 
In \S\ref{sec:Applications} we show how PathFinder can be applied to accurately calculate these types of integrals. 

\section{Algorithm description}
\label{sec:Algorithm}
In this section we present our algorithm for the numerical approximation of \eqref{eq:I} when $f$ is entire\footnote{When $\Gamma$ is infinite we additionally implicitly assume that 
$f$ is sufficiently well-behaved at infinity {(i.e., does not grow too fast at infinity in the relevant directions)} for the integral \eqref{eq:I} to converge. Note that in many cases of interest, numerical evaluation of the integral to high accuracy after path deformation only requires $f$ to be analytic in a small (and shrinking with increasing $\omega$) neighbourhood of the stationary points.} 
and 
$g$ is a polynomial.

We start with some definitions and basic facts. 
Let 
	\begin{equation}\label{eq:gfull}
	g(z) = \sum_{j=0}^J\alpha_jz^j,
	\end{equation}
for some $J\in \N$, $J\geq 1$, and $\alpha_j\in\C$, $j=0,\ldots,J$, with $\alpha_J\neq 0$.
Then $g$ has at most $J-1$ \textit{stationary points}, which are the solutions of
\begin{align}
 \label{eq:StatPointEqn}
  g'(z) = \sum_{j=1}^J j\alpha_jz^{j-1}=0. 
\end{align}
We denote the set of all stationary points by $\Pstat$. 
We define the \textit{valleys} at infinity to be the sectors of angular width $\pi/J$ centred on the angles
\begin{equation}\label{eq:val}
		\cV := {\left\{\frac{(2(m-1)+1/2)\pi - \arg{(\alpha_J)}}{J}:\quad m=1,\ldots,J \right\}}.
		\end{equation}
These have the property that if $z=r\e^{\ii \theta}$ with $\theta\in (v-\pi/(2J),v+\pi/(2J))$ for some $v\in \cV$ then $\e^{\ii \omega g(z)}\to 0$ as $r\to\infty$.  
For each $\eta\in\C\setminus\Pstat$ there exists a unique SD contour $\gamma_{\eta}$ beginning at $\eta$ and ending either at a stationary point $\xi\in\Pstat$ or at a valley $v\in \cV$, on which $\Re g(z)= \Re g(\eta)$ for $z\in \gamma_\eta$ (see, e.g.,\ \cite{Bo:52a}). 

We let $\Pendp$ denote the set of finite endpoints of the integration contour $\Gamma$, which could have zero, one or two elements. We assume for now that any infinite endpoint of $\Gamma$ is at one of the valleys $v\in\cV$; see \S\ref{sec:InfiniteEndpoints} for extensions.  
We now provide a high-level overview of our algorithm. 
The following steps will be explained in more detail in sections \ref{sec:statpoints}-\ref{sec:evaluation}.

\begin{enumerate}
	\item\label{step:getSPs} Compute the set of stationary points $\Pstat$ (the solutions of \eqref{eq:StatPointEqn}). 
	\item\label{step:defineradii}
For each $\xi\in\Pstat$, select a radius $r_\xi>0$ for which the function $\ee^{\ii \omega g(z)}$ is considered %
``non-oscillatory'' on the closed ball $\Omega_\xi$ of radius $r_\xi$ centred at $\xi$. %
These balls may overlap. However, if two balls overlap significantly, indicating near coalescence, one of the stationary points (along with its associated ball) is removed from the set $\Pstat$. 
This removal process continues recursively until no pair of balls is judged to overlap too much. 
	We call $\{\Omega_\xi\}_{\xi\in \Pstat}$ the \emph{non-oscillatory balls}, and their union 
\begin{align}
	\label{eq:NonOscDef}
\Omega:=\bigcup_{\xi\in \Pstat}\Omega_\xi
	\end{align}
	 the \emph{non-oscillatory region}. 
	\item\label{step:steepestexits} 
	Find the local minima of $|\ee^{\ii \omega g(z)}|$ 
	on the boundary of the non-oscillatory region $\Omega$. 
We call these points \emph{exits}, and 
denote by $\Pexit$ the set of all exits.

\item\label{step:tracepaths} For each %
$\eta\in\Pexit\cup (\Pendp \setminus \Omega)$, 
trace the SD contour $\gamma_\eta$ from $\eta$, and determine whether 
\begin{itemize}
\item[(i)] $\gamma_\eta$ enters $\Omega$ at some point $z\in \partial\Omega\setminus\{\eta\}$, or 
\item[(ii)] $\gamma_\eta$ converges towards a valley $v\in \cV$ without entering $\Omega$. 
\end{itemize}
We call points $z\in\partial\Omega$ determined in case (i) \textit{entrances}, and denote by $\Pentr$ the set of all entrances. 
	\item\label{step:graph} Construct a graph $G$ with a vertex for each of the elements of $\Pstat$, $\Pendp$, $\Pexit$, $\Pentr$ and $\cV$. 
Add edges between the vertices of $G$ as follows:
\begin{itemize}
\item For each $\xi\in \Pstat$, add an edge between each pair of elements of $(\Pstat\cup \Pendp\cup\Pexit \cup \Pentr)\cap \Omega_\xi$.
\item For each pair $\xi,\xi'\in \Pstat$, $\xi\neq \xi'$, for which $\Omega_\xi\cap \Omega_{\xi'}\neq\emptyset$, add an edge between $\xi$ and $\xi'$, if not already added in the previous step. 
\item For each $\eta\in \Pexit\cup (\Pendp \setminus \Omega)$, add an edge between $\eta$ and the entrance $z\in \Pentr$ or the valley $v\in \cV$ to which the SD contour $\gamma_\eta$ leads.
\end{itemize}
Find the shortest path (in the graph-theoretic sense) between the vertices corresponding to the endpoints of $\Gamma$. 

\item\label{step:quadrature} Generate quadrature nodes and weights for the evaluation of each of the contour integrals corresponding to the edges in the shortest path. For an edge between two points in the non-oscillatory region, use a straight-line contour. For an edge between an exit or an endpoint of $\Gamma$ to an entrance or a valley, use a refined version of the SD contour traced in step 4. The union of all the contours corresponding to the edges of the shortest path defines the ``quasi-SD deformation'' of the original integration contour. 
Finally, use the quadrature nodes and weights to approximate the integrals over the contours in the quasi-SD deformation 
and sum them to obtain an approximation of the original integral \eqref{eq:I}.
\end{enumerate}

In Figures \ref{fig:intro_eg} and \ref{fig:intro_eg_graph} we illustrate the outcome of the above steps 
for the particular choice of phase function 
\begin{equation}\label{eq:intro_eg_phase}
	\begin{split}
g(z)=&\frac{z^7}{7}+z^6\,\left(\frac{7}{20}+\frac{13}{30}{}\mathrm{i}\right)+z^5\,\left(-\frac{1047}{2000}+\frac{543}{1000}{}\mathrm{i}\right)+z^4\,\left(-\frac{4409}{8000}-\frac{5077}{8000}{}\mathrm{i}\right)\\&+z^3\,\left(\frac{711}{2000}-\frac{4441}{6000}{}\mathrm{i}\right)+z^2\,\left(\frac{237}{800}-\frac{207}{800}{}\mathrm{i}\right)+z\,\left(\frac{63}{1000}-\frac{77}{2000}{}\mathrm{i}\right)
\end{split}
\end{equation}
and the parameters $\omega=40$, $a=-1.5$, $b=2$, $N=10$, using the default parameter set for PathFinder (see Table \ref{tab:Parameters}). 
For this choice of $g$ there is one order 2 stationary point and 4 order one stationary points. In Figure \ref{fig:intro_eg} we plot these stationary points, along with their non-oscillatory balls, and the SD contours traced from the exits. 
Such plots can be generated in PathFinder by adding the optional \texttt{\textquotesingle plot\textquotesingle} flag. The ball centred at the stationary point $\xi=-\ii$ contains two entrances, reached by SD contours from the balls above.  
In Figure \ref{fig:intro_eg_graph} we plot the graph $G$, 
using the optional PathFinder input flag \texttt{\textquotesingle plot graph\textquotesingle}. This graph, in addition to edges corresponding to the SD contours shown in Figure \ref{fig:intro_eg}, contains edges corresponding to contours between points in the non-oscillatory region, including connections within the two overlapping balls. 
The shortest path between $a$ and $b$, which is highlighted with thick lines in Figure \ref{fig:intro_eg_graph}, corresponds to the quasi-SD deformation, the integral over which is equal to \eqref{eq:I} by Cauchy's Theorem.
The integral is discretised using $N$ quadrature points on each contour in the quasi-SD deformation that makes a non-negligible contribution to the integral (see \S\ref{sec:evaluation}) - these points are plotted in Figure \ref{fig:intro_eg} in red.

The process of computing all the SD contours and the selection of a subset thereof via the shortest path algorithm addresses problem (P1). 
Surrounding stationary points by balls, and only tracing SD contours outside the balls, means that we avoid having to determine the local structure of the SD contours and compute integrals along them near stationary points, 
addressing problem (P2).

\begin{figure}[pt!]
	\centering
\subfloat[SD contours and quasi-SD deformation]{\includegraphics[width=0.8\linewidth]{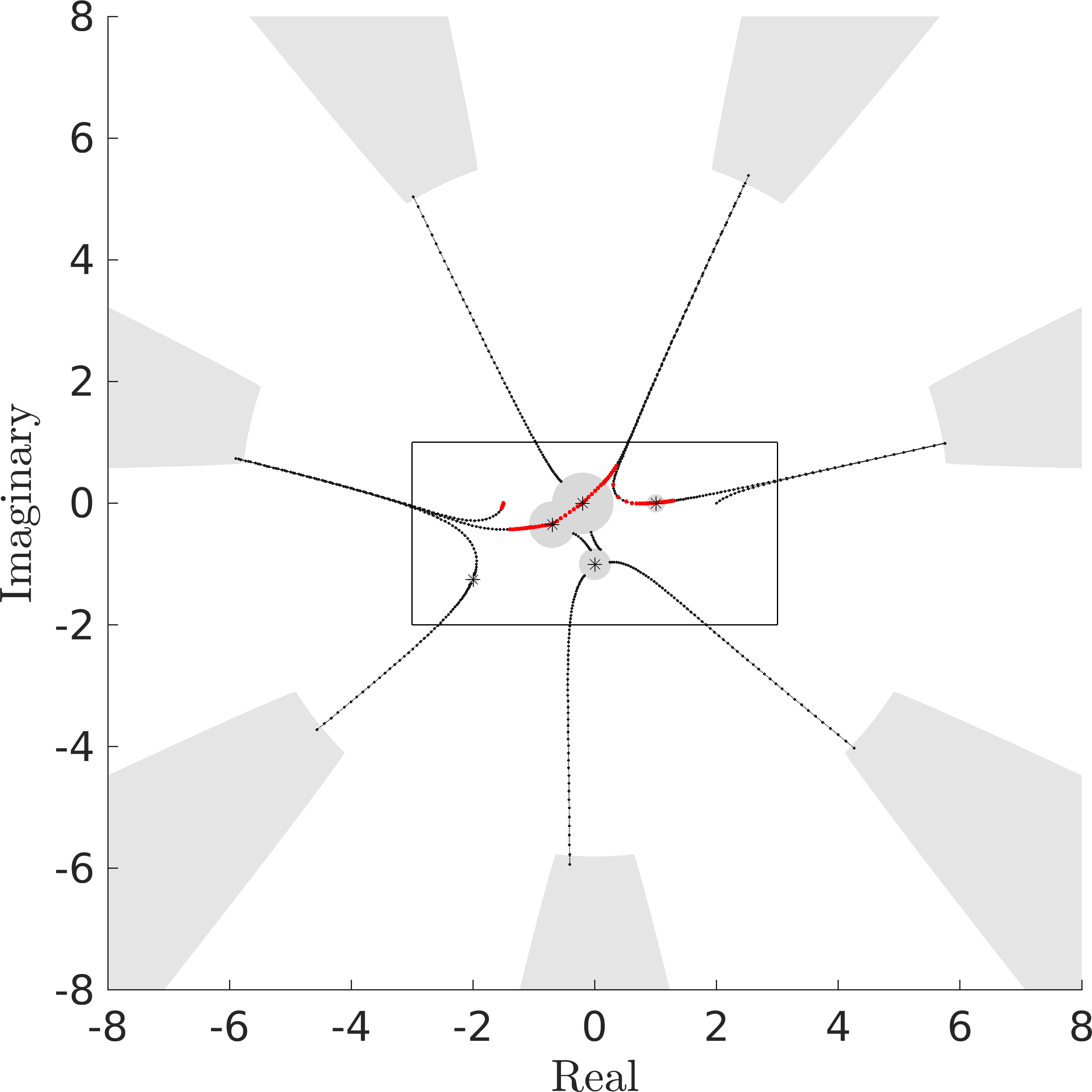}}\\
\subfloat[Zoom of boxed region near the origin]{\includegraphics[width=\linewidth]{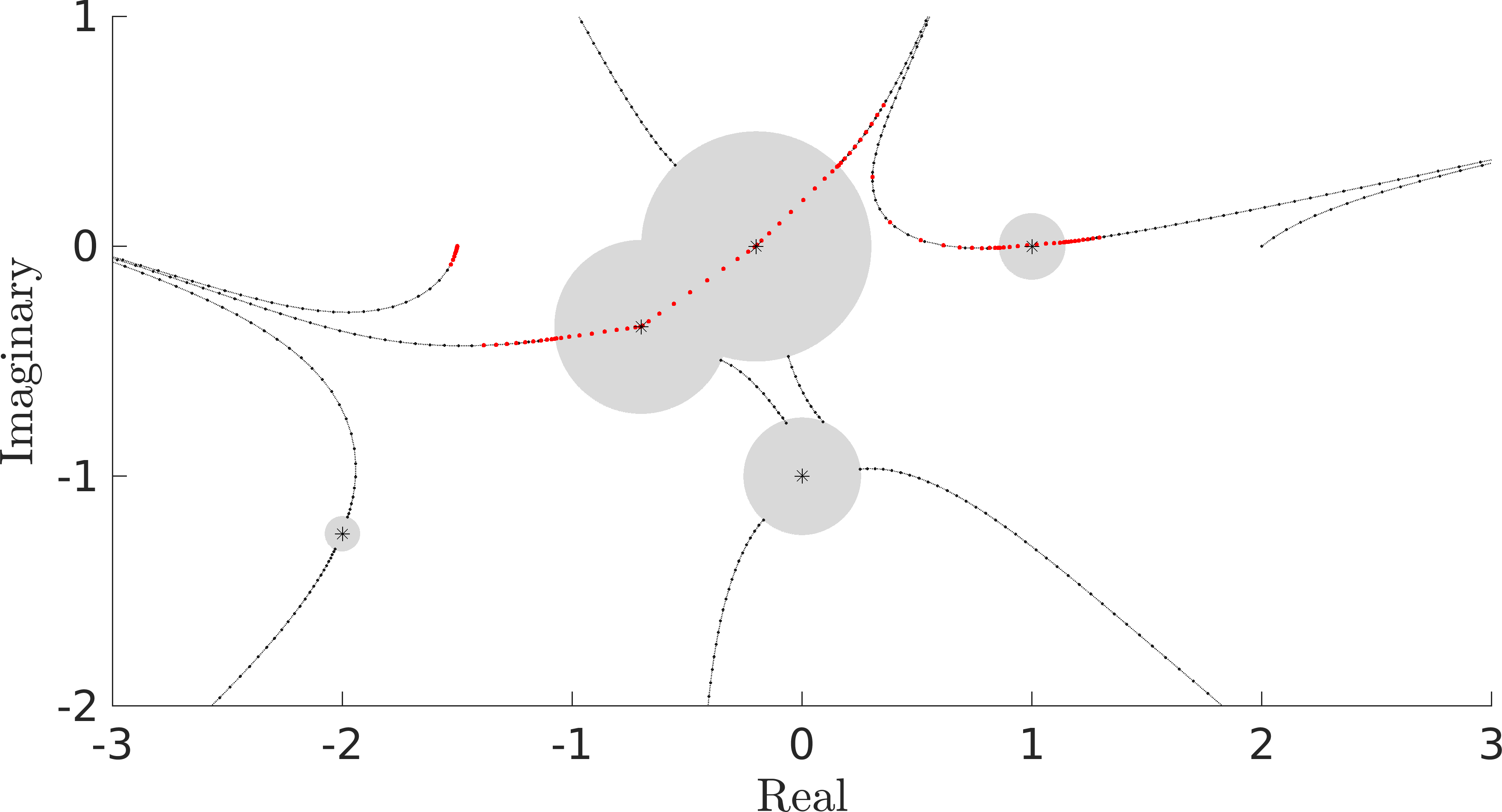}}\\
	\caption{Output of algorithm when applied with phase \eqref{eq:intro_eg_phase}, $\omega=40$, $a=-1.5$, $b=2$, $N=10$, and the default parameter set (see Table \ref{tab:Parameters}). Here we observe stationary points {$\xi\in\Pstat$} (black stars) surrounded by balls {$\Omega_\xi$} (grey) {whose union is the non-oscillatory region $\Omega$}, SD contours (black lines) traced from exits {$\eta\in\Pexit$} and finite endpoints {$\eta\in\Pendp$ to either valleys $v\in\mathcal{V}$ at infinity or to entrances $z\in \Pentr$}, and quadrature points (red points) allocated along the appropriate contours in the quasi-SD deformation. 
The ``region of no return'' {$R_v$} (see \S\ref{sec:NoReturn}) around {each of} the valleys {$v\in\mathcal{V}$} at infinity is also shaded grey. %
}
\label{fig:intro_eg}
\end{figure}

\begin{figure}[pt!]
	\centering
\subfloat[Graph $G$]{\includegraphics[width=\linewidth]{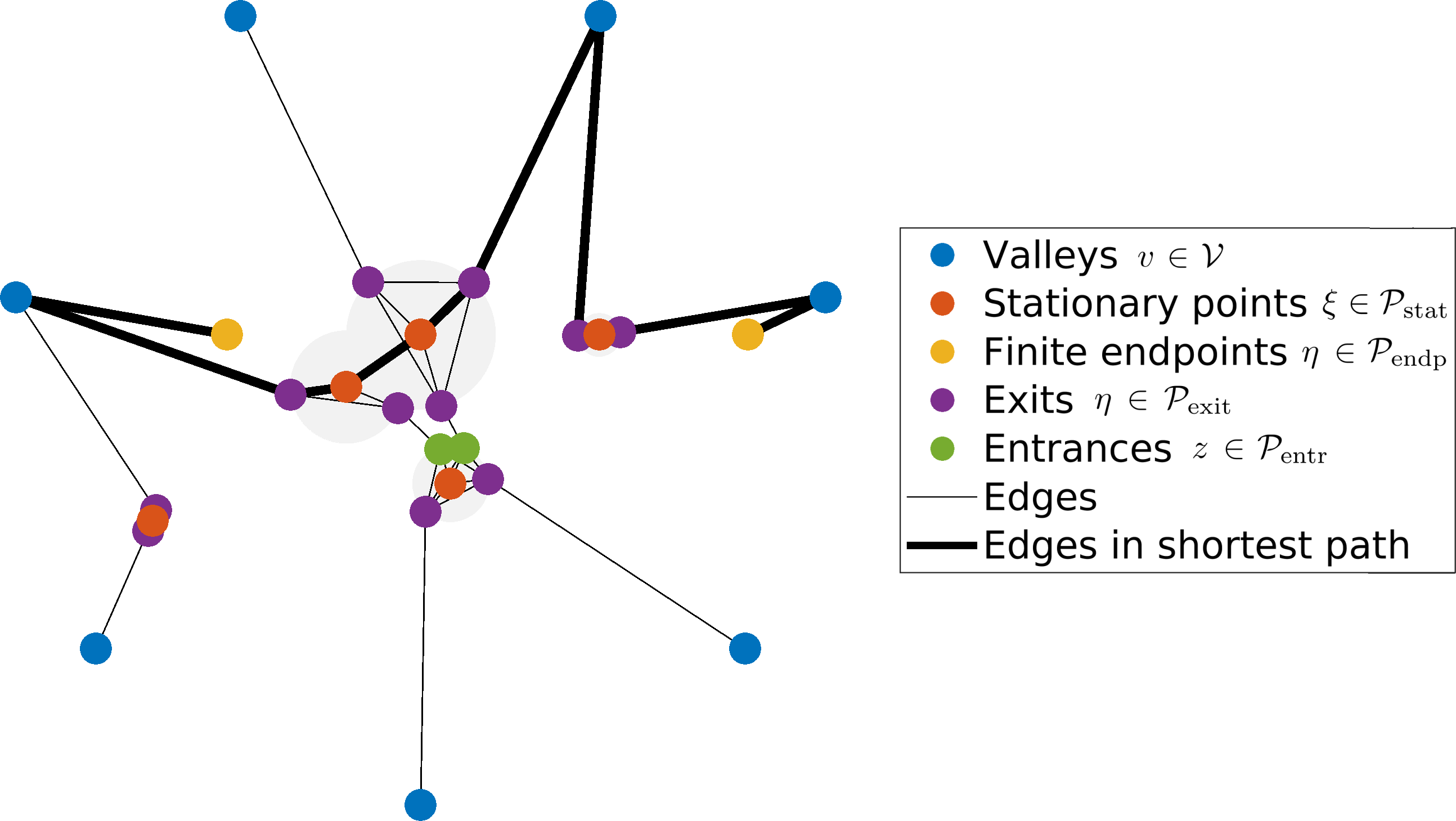}}\\
\subfloat[Zoom of the graph $G$]{\includegraphics[width=.6\linewidth]{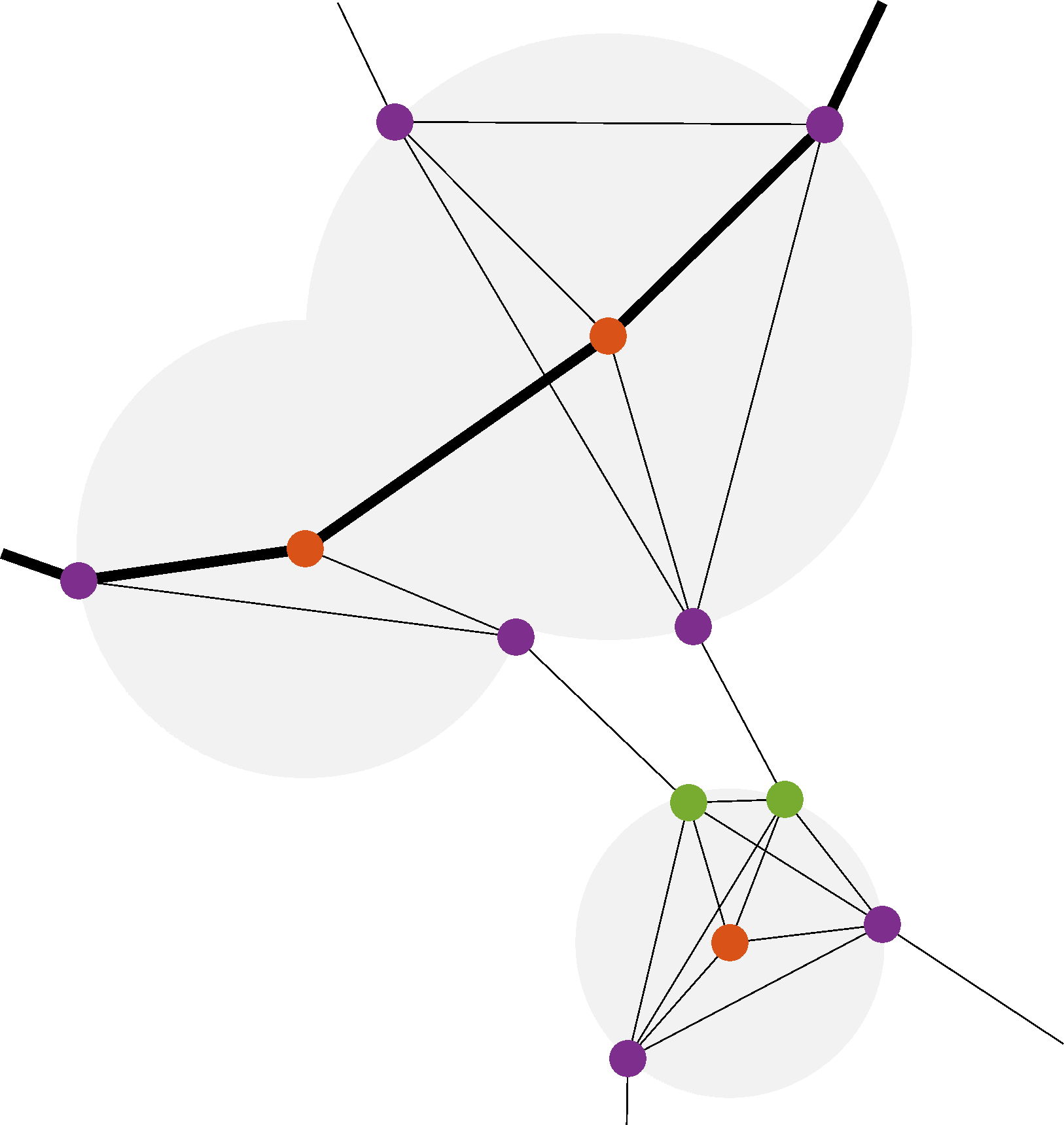}}

\caption{The graph $G$ corresponding to the problem considered in Figure \ref{fig:intro_eg}. The thick line represents the shortest path between the endpoints {of $\Gamma$}, which in this case are both finite. The balls (shaded grey) are included for ease of comparison with Figure \ref{fig:intro_eg}. The lower figure zooms in on the centre of the upper figure, showing the multiple edges that are constructed inside the balls.}\label{fig:intro_eg_graph}
\end{figure}

In the following subsections we provide further details of how we carry out the steps outlined above in PathFinder. 
\subsection{Step 1 - Computing stationary points}
\label{sec:statpoints}
Computing the stationary points of $g$ (the roots of $g'(z)$) requires us to find the complex roots of the polynomial \eqref{eq:StatPointEqn}. In our implementation we compute stationary points using the Matlab \verb|roots| command, which applies a companion matrix approach. We note that obtaining highly accurate values for the positions of stationary points is not critical to our algorithm, since the stationary points are enclosed in the non-oscillatory region and we never trace SD contours from them. Indeed, the difficulty in distinguishing numerically between multiple roots and roots of higher order contributes to the motivation for considering such non-oscillatory regions.

\subsection{Step 2 - Defining the non-oscillatory region}
\label{sec:DefNonOsc}
The non-oscillatory region $\Omega$ was defined in \eqref{eq:NonOscDef} to be a union of balls centred at the elements of $\Pstat$. We choose the radii of the balls as follows. First fix some user-defined constant $C_{\rm ball}>0$. Then, given $\xi\in \Pstat$, 
define
\begin{align}
\label{eq:rxiDef}
r_\xi:= \max\{r>0: |z-\xi|\leq r\Rightarrow \omega|g(z)-g(\xi)|\leq C_{\rm ball}\}. 
\end{align}
This definition enforces an upper bound on the number of oscillations within each ball. 
Accordingly, the region $\Omega$ shrinks to the stationary points as $\omega\to\infty$ and expands to fill the whole complex plane as $\omega\to 0$. 

In our implementation we approximate $r_\xi$ numerically as follows. Let $N_{\rm ball}\in \N$ be a user-defined parameter. For each $n\in\{1,\ldots,N_{\rm ball}\}$ we consider the ray $\{z=\xi + r\ee^{\ii 2\pi n/N_{\rm ball}}, \,r>0\}$, and compute the smallest positive root $r_n>0$ of the function $u_n(r):=\omega^2|g(\xi + r\ee^{\ii 2\pi n/N_{\rm ball}})-g(\xi)|^2-C_{\rm ball}^2$, which is a polynomial in $r$ of degree $2J$. 
For this root-finding problem we use the Matlab \verb|roots| command; in case this command produces no positive real roots (because of stability issues) we resort to a bisection approach instead.  
We then take as our approximation to $r_\xi$ the positive number $\max_{n\in\{1,\ldots,N_{\rm ball}\}} r_n$.

When elements of $\Pstat$ are close it is natural to amalgamate their respective non-oscillatory balls. To do this systematically we adopt an iterative approach. 
Let $\delta_{\rm ball}>0$ be a user-defined parameter. 
\begin{itemize}
\item 
For each pair $\xi_1,\xi_2\in \Pstat$ compute
\[d_{\xi_1,\xi_2}:=|\xi_1-\xi_2|/\max(r_{\xi_1},r_{\xi_2}).\]
\item
If $\min_{\xi_1,\xi_2} d_{\xi_1,\xi_2}<\delta_{\rm ball}$  let $\xi_1,\xi_2$ be a pair realising the minimum.
Remove from $\Pstat$ whichever of $\xi_1,\xi_2$ has the smaller associated ball radius (or choose arbitrarily between them if $r_{\xi_1}=r_{\xi_2}$).
\item Repeat the previous step 
until either $\min_{\xi_1,\xi_2} d_{\xi_1,\xi_2}\geq \delta_{\rm ball}$, or there is only one element of $\Pstat$ remaining.
\end{itemize} 

\subsection{Step 3 - Determining the exits}
The exits associated with each $\xi\in \Pstat$ are defined to be the local minima on $\partial\Omega_\xi\setminus \bigcup_{\xi'\in \Pstat,\xi'\neq \xi}\Omega_{\xi'}^\circ$ of the function $|\ee^{\ii \omega g(z)}|$, equivalently of the function $-\Im g(z)$. 

For each $\xi\in \Pstat$ the function $-\Im g(z)$ restricted to the $\partial\Omega_\xi$ is a trigonometric polynomial. Using this fact, in our implementation we determine the local minima of $-\Im g(z)$ on $\partial\Omega_\xi$ by finding the roots of the derivative of $-\Im g(z)$ in the angular direction (which is also a trigonometric polynomial) by the companion matrix approach of \cite[\S2.2]{boyd2006computing}, and keep only the real roots corresponding to local minima. 
We discard all those minima corresponding to points inside $\bigcup_{\xi'\in \Pstat,\xi'\neq \xi}\Omega_{\xi'}^\circ$, and add the remaining minima to the set $\Pexit$. 

\subsection{Step 4 - Tracing the SD contours}
\label{sec:SDTracing}
Given $\eta\in\Pexit\cup (\Pendp \setminus \Omega)$, the SD contour $\gamma_{\eta}$ beginning at $\eta$ is the unique curve on which $\Re g(z)$ is constant, with $-\Im g(z)$ decreasing along $\gamma_\eta$. It can be parametrized in terms of a parameter $p\geq 0$ as $z=h_\eta(p)$, where $h_\eta(p)$ is defined implicitly by
\begin{align}
\label{eq:SDparam}
g(h_{\eta}(p)) = g(\eta) + 
\ii p,
 \qquad h_{\eta}(0)=\eta.
\end{align}
Differentiating \eqref{eq:SDparam} with respect to $p$ gives
\begin{align}
\label{eq:ODE}
h_\eta'(p) = 
\frac{\ii}{g'(h_{\eta}(p))}
=:F(h_\eta(p)), \qquad h_{\eta}(0)=\eta,
\end{align}
which is a first order ODE initial value problem for $h_\eta(p)$. 
By solving \eqref{eq:ODE} numerically one can trace the contour $\gamma_{\eta}$ until it either (i) enters the non-oscillatory region $\Omega$, or (ii) one can be sure that it will tend to a valley $v\in \cV$, without entering $\Omega$. 
For (ii) we appeal to the theoretical result in Theorem \ref{thm:NoReturn}, which provides a ``region of no return'' $R_v$ associated with each valley $v\in\cV$ for which it is guaranteed that if an SD contour enters $R_v$ it will never leave $R_v$, and will converge to $v$. 

Staying away from stationary points ensures that the factor $1/g'$ in the right-hand side of \eqref{eq:ODE} does not get too large. 

In our implementation we trace the SD contour using a predictor-corrector approach, combining a forward Euler step for \eqref{eq:ODE} and a Newton iteration for \eqref{eq:SDparam}, to generate approximations 
$h_\eta^{(n)}\approx h_\eta(p_n)$ 
on a mesh $0=p_0<p_1<p_2<\ldots<p_{n_{\rm max}}$, where the total number of steps $n_{\rm max}$ is determined as part of the algorithm, as discussed below. 

As the initial value we take $h_\eta^{(0)}=\eta$. Then, given $h_\eta^{(n)}$, to compute $h_\eta^{(n+1)}$ we first apply a forward Euler step for the ODE \eqref{eq:ODE}, with adaptive step length
\[p_{n+1}-p_n = \delta_{\rm ODE}\min\left(2\frac{|g'(h_\eta^{(n)})|^2}{|g''(h_\eta^{(n)})|},|g'(h_\eta^{(n)})|
\dist(h_\eta^{(n)},\Pstat)\right),\]
where $\delta_{\rm ODE}\in(0,1)$ is a user-specified parameter. The first argument of the minimum is included to ensure stability of the solver - note that $F'(h) = -\frac{\ii g''(h)}{(g'(h))^2}$ and we might
expect instability if the local step length were as large as $2/|F'(h)| = 2\frac{|g'(h)|^2}{|g''(h)|}$. 
The second argument is included to ensure that the solver ``slows down'' as it approaches the non-oscillatory region $\Omega$, so that we can detect whether $\gamma_\eta$ enters $\Omega$ or not. To ensure that $|h_\eta^{(n+1)}-h_\eta^{(n)}| \leq \delta_{\rm ODE} d$, where $d:=\dist(h_\eta^{(n)},\Pstat)=\min_{\xi\in \Pstat}|h_\eta^{(n)}-\xi|$, we require that $p_{n+1}-p_n\leq \frac{\delta_{\rm ODE} d}{|F(h_\eta^{(n)})|}=\delta_{\rm ODE} d|g'(h_\eta^{(n)})|$. This also ensures that $h_\eta^{(n+1)}$ remains far enough from $\Pstat$, so that \eqref{eq:ODE} doesn't get too large.

After each forward Euler step, we correct $h_\eta^{(n+1)}$ by running a Newton iteration to enforce \eqref{eq:SDparam} (with $p=p_{n+1}$ fixed), 
until the Newton step size $|\frac{g(h_\eta^{(n+1)})-g(\eta)-\ii p_{n+1}}{g'(h_\eta^{(n+1)})}|$ is smaller than $\delta_{\rm coarse}\dist(h_\eta^{(n+1)},\Pstat)$, for some user-specified tolerance $\delta_{\rm coarse}>0$.

We repeat this process for $n=0,1,2,\ldots$ until either 
\begin{itemize}
\item[(i)] $h_\eta^{(n)}\in \Omega_\xi$ for some $\xi\in \Pstat$, in which case we add $z=h_\eta^{(n)}$ to the set $\Pentr$ of entrances. Note that in general the point $z=h_\eta^{(n)}$ will lie inside $\Omega_\xi^\circ$ rather than on $\partial\Omega_\xi$, but will be closer to $\partial\Omega_\xi$ the smaller $\delta_{\rm ODE}$ is; 
\item[] or
\item[(ii)] $h_\eta^{(n)}\in R_v$ for some $v\in \cV$, in which case, by Theorem \ref{thm:NoReturn}, $\gamma_\eta$ converges to the valley $v$. 
Here the ``region of no return'' $R_v$ is defined by
\begin{align}
\label{eq:RegOfNoReturn}
R_v:=\{z\in\C: |\arg z-v|_{2\pi}
<\pi/(2J) \text{ and }G(|z|,|\arg z-v|_{2\pi})>0\},
\end{align}
where
\begin{align}
\label{eq:AbsTheta}
|\theta|_{2\pi}:=\min_{m\in\Z}|\theta-2\pi m|,
\end{align}
and, for $r>0$ and $\theta\in(0,\pi/(2J))$, 
\begin{align}
\label{eq:RegFunction}
G(r,\theta) := J|\alpha_J|r^{J-1}\min\left(1/\sqrt{2},\cos{J\theta}\right) - 
\sum_{j=1}^{J-1} j|\alpha_j|r^{j-1}.
\end{align}
For further explanation of the meaning of $R_v$ see \S\ref{sec:NoReturn} below.

A necessary condition for a point $z$ to lie in $R_v$ is that 
$|z|\geq r_*$, where $r_*>0$ is the unique positive solution of the polynomial equation $G(r_*,\pi/(4J))=0$, i.e.\ the solution of
\[ \frac{J|\alpha_J|r_*^{J-1}}{\sqrt{2}}= 
\sum_{j=1}^{J-1} j|\alpha_j|r_*^{j-1}.\]
Having found $r_*$ once and for all (using the Matlab \texttt{roots} commnad), to check that a point $z$ lies in $R_v$ we first check 
that $|z|\geq r_*$. 
If so, we then check that $|\arg z-v|_{2\pi}<\pi/(2J)$. If so, we then check that $G(|z|,|\arg z-v|_{2\pi})>0$. 
The point of introducing $r_*$ is so that we don't compute $G(|z|,|\arg z-v|_{2\pi})$ unless absolutely necessary.
\end{itemize}

In either case, tracing of the SD contour stops and we set $n_{\rm max}=n$ for this contour.

\subsection{Step 5 - Finding the shortest path}

The construction of the graph $G$ requires no further explanation. 
To find the shortest path in $G$ between the endpoints of the original contour $\Gamma$ we apply the standard Dijkstra shortest path algorithm  \cite[\S10.6.2]{Ro:19}.

\subsection{Step 6 - Evaluating the contour integrals}
\label{sec:evaluation}

The quasi-SD contour deformation corresponding to the graph-theoretic shortest path between the endpoints of $G$ calculated during step 5 involves integrals over three types of contour: 
\begin{itemize}
\item[\quad\textbf{Type 1}:] Straight line contours between points in the non-oscillatory region;
\item[\quad\textbf{Type 2}:] Infinite SD contours from exits/endpoints to valleys;
\item[\quad\textbf{Type 3}:] Finite SD contours from exits/endpoints to entrances.
\end{itemize}

Some of these contours will make a larger contribution to the value of the original integral \eqref{eq:I} than others. It is natural to neglect contours that make a negligibly small contribution. 
In our implementation, we only compute the contribution from a contour $\gamma$ in the quasi-SD deformation if at least one of the finite endpoints $\eta$ of $\gamma$ satisfies 
$|\ee^{\ii \omega g(\eta)}|/M>\delta_{\rm quad}$, 
where $\delta_{\rm quad}\geq 0$ is a small, user-specified parameter and 
\[ M:=\max |\ee^{\ii \omega g(\xi)}|,\]
where the maximum is taken over all $\xi\in\Pstat\cup\Pendp\cup\Pexit$ appearing in the shortest path corresponding to the quasi-SD deformation. 

In our implementation, for Type 1 contours we use Gauss-Legendre quadrature, for Type 2 contours we use either Gauss-Laguerre quadrature (which is the default choice in PathFinder) or truncated Gauss-Legendre quadrature, 
and for Type 3 contours we use (possibly truncated) Gauss-Legendre quadrature, as detailed below. By default our implementation uses the same number $N$ of quadrature points on each contour in the quasi-SD deformation whose contribution we compute, regardless of the type of integral (we comment on this in \S\ref{sec:NumberOfQuadPoints}). 
Accordingly, if $N_{\rm cont}$ is the number of these contours then the total number of quadrature points used in the algorithm, $N_{\rm tot}$, is given by 
\begin{align}
\label{eq:Ntot}
N_{\rm tot} = N N_{\rm cont}.
\end{align}

\subsubsection{Evaluation of integrals over Type 1 contours}
\label{sec:Type1}
Let $z_0,z_1\in\Omega$, and let $\gamma$ be the straight-line contour in $\C$ starting at $z_0$ and ending at $z_1$, parametrized by 
\begin{align}
\label{eq:zDef}
z_{[z_0,z_1]}(t) = \frac{1}{2}\Big((z_1-z_0)t + (z_0+z_1)\Big), \qquad t\in[-1,1].
\end{align}
Given $N\in \N$, let $t_m^{\rm Leg}$ and $w_m^{\rm Leg}$, for $m=1,\ldots,N$, denote the nodes and weights for standard $N$-point Gauss-Legendre quadrature on $[-1,1]$. 
Our quadrature approximation to the integral over $\gamma$ is then:
\begin{align}
\label{eq:GauLegApplication}
\int_{\gamma}f(z)\ee^{\ii\omega g(z)}\,\dd z \approx \frac{z_1-z_0}{2} \sum_{m=1}^{N} w_m^{\rm Leg} f(z_{[z_0,z_1]}(t_m^{\rm Leg}))\ee^{\ii\omega g(z_{[z_0,z_1]}(t_m^{\rm Leg}))}. 
\end{align}

\subsubsection{Evaluation of integrals over Type 2 contours}
\label{sec:Type2}
Let $\eta\in \Pexit\cup(\Pendp\setminus\Omega)$ be such that the SD contour $\gamma$ from $\eta$ leads to a valley. Parametrizing $\gamma$ by \eqref{eq:SDparam}, for $p\in[0,\infty)$, noting \eqref{eq:ODE}, and rescaling $p=\tilde{p}/\omega$, we have
\begin{align}
\label{eq:SDIntegralInfinite}
\int_{\gamma}f(z)\ee^{\ii\omega g(z)}\,\dd z = \frac{\ee^{\ii \omega g(\eta)}}{\omega}\int_0^{\infty} \tilde{f}(\tilde{p})\ee^{-\tilde{p}}\,\dd \tilde{p}, 
\end{align}
where 
\[\tilde{f}(\tilde{p}):=f(h_\eta(\tilde{p}/\omega))h_\eta'(\tilde{p}/\omega)=\ii\frac{f(h_\eta(\tilde{p}/\omega))}{g'(h_\eta(\tilde{p}/\omega))}. \]
By tracing contours outside of $\Omega$, the contours remain a positive distance from $\Pstat$. This ensures that $\tilde{f}$ is analytic in a complex neighbourhood of $[0,\infty)$. 

By default, PathFinder evaluates the integral on the right-hand side of \eqref{eq:SDIntegralInfinite} by Gauss-Laguerre quadrature. Let $t_m^{\rm Lag}$ and $w_m^{\rm Lag}$, for $n=1,\ldots,N$, denote the standard Gauss-Laguerre nodes and weights on $[0,\infty)$. 
Our quadrature approximation to the integral over $\gamma$ is then:
\begin{align}
\label{eq:GauLagApplication}
\int_{\gamma}f(z)\ee^{\ii\omega g(z)}\,\dd z \approx \frac{\ee^{\ii \omega g(\eta)}}{\omega} \sum_{m=1}^{N}w_m^{\rm Lag}\tilde{f}(t_m^{\rm Lag}).
\end{align}
To evaluate $\tilde{f}(t_m^{\rm Lag})$ we need accurate computations of $h_\eta(t_m^{\rm Lag}/\omega)$ for $m=1,\ldots,N$. For this, for each $m$ we run a Newton iteration on \eqref{eq:SDparam} with $p=t_m^{\rm Lag}/\omega$ fixed, until the magnitude of the increment is smaller than a user-specified tolerance $\delta_{\rm fine}>0$. Typically we take $\delta_{\rm fine}$ to be  considerably smaller than the tolerance $\delta_{\rm coarse}$ used in the Newton iteration in step 4, since when carrying out quadrature we require higher accuracy in our approximation of the SD contour than is required for determining the global structure of the quasi-SD deformation in step 4. As the initial guess for the Newton method we use a piecewise linear interpolant of the points $\{(p_0,h_\eta^{(0)}),(p_1,h_\eta^{(1)}),\ldots,(p_{n_{\rm max}},h_\eta^{(n_{\rm max})})\}$ computed in step 4, where $n_{\rm max}$ denotes the total number of steps taken in the ODE solve in step 4 before the contour tracing algorithm terminated. If $p_{n_{\rm max}}<t_{N}^{\rm Lag}/\omega$ then before running the Newton iteration we first need to restart the contour tracing algorithm of step 4 to extend the SD contour until $p_{n_{\rm max}}\geq t_{N}^{\rm Lag}/\omega$, so that there are points to interpolate between.

As an alternative, one can evaluate the integral over a Type 2 contour using truncated Gauss-Legendre quadrature, as suggested in \cite{Tr:22}. 
To activate this alternative in PathFinder one should add the optional input \texttt{\textquotesingle inf quad rule\textquotesingle, \textquotesingle legendre\textquotesingle}. 
In this case we truncate the integral to 
\begin{align}
\label{eq:Type2Truncation}
\int_{\gamma}f(z)\ee^{\ii\omega g(z)}\,\dd z \approx \frac{\ee^{\ii \omega g(\eta)}}{\omega}\int_0^{P} \tilde{f}(\tilde{p})\ee^{-\tilde{p}}\,\dd \tilde{p},
\end{align}
for some $P>0$, then apply Gauss-Legendre quadrature on $[0,P]$, to obtain the approximation
\begin{align}
\label{eq:Type2GauLeg}
\int_{\gamma}f(z)\ee^{\ii\omega g(z)}\,\dd z \approx \frac{P\ee^{\ii \omega g(\eta)}}{2\omega} \sum_{m=1}^{N}w_m^{\rm Leg}\tilde{f}(z_{[0,P]}(t_m^{\rm Leg}))\ee^{-z_{[0,P]}(t_m^{\rm Leg})},
\end{align}
where we compute $h_\eta(z_{[0,P]}(t_m^{\rm Leg}/\omega))$ (which is required for the evaluation of $\tilde{f}(z_{[0,P]}(t_m^{\rm Leg}))$) by the same Newton iteration discussed above for $h_\eta(t_m^{\rm Lag}/\omega)$.   
For the truncation point $P$ we take %
\begin{align}
 \label{eq:PDef1}
P = L,
 \end{align} 
where
\begin{align}
\label{eq:LDef}
L := -\log\left(\delta_{\rm quad} M /|\ee^{\ii \omega g(\eta)}|\right),
\end{align}
which describes the point at which the magnitude of the exponential part of the integrand drops below $\delta_{\rm quad}$ times its maximum value $M$ on the quasi-SD deformation. 

\subsubsection{Evaluation of integrals over Type 3 contours}
\label{sec:Type3}
Let $\eta\in \Pexit\cup(\Pendp\setminus\Omega)$ be such that the SD contour $\gamma$ from $\eta$ leads to an entrance $z\in \Pentr$. 
In this case we apply (possibly truncated) Gauss-Legendre quadrature as in formulas \eqref{eq:Type2Truncation} and \eqref{eq:Type2GauLeg}, but now with 
\begin{align}
\label{eq:PDef2}
P = \min(p_{n_{\rm max}}/\omega,L),
\end{align}
where $p_{n_{\rm max}}$ is defined as in \S\ref{sec:SDTracing} and $L$ is defined as in \S\ref{sec:Type2}. 

In the case where the minimum is attained by $p_{n_{\rm max}}/\omega$, so that the whole contour is considered, a potential inconsistency arises, because the application of the higher accuracy Newton iteration described in \S\ref{sec:Type2} for the calculation of $h_\eta(z_{[0,P]}(t_m^{\rm Leg}/\omega))$ corresponds implicitly to a slight shifting of the endpoint of the contour $\gamma$ away from the entrance $z=h_\eta^{(n_{\rm max})}$ added to the graph $G$ in step 5. 
To avoid this inconsistency, in our implementation, in step 4, whenever the contour tracing terminates in case (i), we run a Newton iteration on the final point $h_\eta^{(n_{\rm max})}$ with the high accuracy tolerance $\delta_{\rm fine}$, before adding it to the list of entrances $\Pentr$. Note that this may mean that $h_\eta^{(n_{\rm max})}$ lies very slightly outside $\Omega$. 
\section{Theoretical results}
\label{sec:Theory}
In this section we collect some theoretical results that motivate the design of our algorithm. 
\subsection{Removal of stationary points}\label{sec:removal}

In \S\ref{sec:DefNonOsc} we described our algorithm for removing stationary points from the set $\Pstat$ when they are close. When removing stationary points and their associated non-oscillatory balls, we need to ensure that the removed stationary points still lie inside one of the remaining non-oscillatory balls, so that we don't encounter any stationary points along the trajectory in our ODE solve for the SD contour tracing (see the discussion in \S\ref{sec:SDContours} below). In this section we provide a sufficient condition on the parameter $\delta_{\rm ball}$ for this to be guaranteed. 

\begin{proposition}
Suppose that in the removal algorithm of \S\ref{sec:DefNonOsc}, $n$ stationary points have been removed from $\Pstat$. Then for any stationary point $\xi$ that was removed, there exists $\xi'\in\Pstat$ such that $|\xi-\xi'|\leq n\delta_{\rm ball} r_{\xi'}$. 
\end{proposition}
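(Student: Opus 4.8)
The plan is to prove the statement by induction on $n$, the number of stationary points removed so far. The base case $n=0$ is vacuous (no stationary points removed). For the inductive step, suppose the claim holds after $n$ removals, and consider the $(n+1)$-st removal. At that moment the current set $\Pstat$ (before this removal) consists of the original stationary points minus the $n$ already removed; by the inductive hypothesis each of those $n$ removed points lies within $n\delta_{\rm ball}r_{\xi'}$ of some $\xi'$ in the current $\Pstat$. The removal algorithm picks a pair $\xi_1,\xi_2$ in the current $\Pstat$ realising $\min d_{\xi_1,\xi_2}<\delta_{\rm ball}$, and removes whichever has the smaller ball radius; say it removes $\xi_1$, so $r_{\xi_1}\le r_{\xi_2}$ and $|\xi_1-\xi_2|/\max(r_{\xi_1},r_{\xi_2}) = |\xi_1-\xi_2|/r_{\xi_2} < \delta_{\rm ball}$, i.e.\ $|\xi_1-\xi_2|<\delta_{\rm ball}r_{\xi_2}$.

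The key point is then a ``redirection'' argument: $\xi_2$ may itself be removed at some later stage, so to get a bound in terms of a point that survives \emph{to the end} (or at least remains in $\Pstat$ after all $n+1$ removals under consideration), I need to chain the triangle inequality. Concretely, I would argue as follows. After the $(n+1)$-st removal, either $\xi_2\in\Pstat$, in which case $\xi_1$ is within $\delta_{\rm ball}r_{\xi_2}\le (n+1)\delta_{\rm ball}r_{\xi_2}$ of a current element and we are done for $\xi_1$; and each of the previously-removed $n$ points, which was within $n\delta_{\rm ball}r_{\xi'}$ of some $\xi'$ in the pre-removal $\Pstat$, is still fine if that $\xi'\ne\xi_1$. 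The only points whose ``anchor'' $\xi'$ just got removed are those (among the $n$ earlier removals) that were anchored to $\xi_1$. For such a point $\xi$ we have $|\xi-\xi_1|\le n\delta_{\rm ball}r_{\xi_1}$ and $|\xi_1-\xi_2|<\delta_{\rm ball}r_{\xi_2}$; since $r_{\xi_1}\le r_{\xi_2}$, the triangle inequality gives $|\xi-\xi_2|\le n\delta_{\rm ball}r_{\xi_1}+\delta_{\rm ball}r_{\xi_2}\le (n+1)\delta_{\rm ball}r_{\xi_2}$, and $\xi_2$ is still in $\Pstat$. This re-anchors all the orphaned points to $\xi_2$ with the bound $(n+1)\delta_{\rm ball}r_{\xi_2}$, completing the induction.

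The main obstacle is bookkeeping the re-anchoring carefully: one must track, after each removal, a function assigning to every removed stationary point a current ``anchor'' in $\Pstat$ together with the bound $(\text{number of removals so far})\cdot\delta_{\rm ball}\cdot(\text{anchor radius})$, and check that a single removal step increases this bound by at most one unit of $\delta_{\rm ball}r$ while never orphaning a point permanently. A clean way to package this is to state and prove the stronger invariant: \emph{after $n$ removals, there is a map $\phi$ from the set of removed points to the current $\Pstat$ with $|\xi-\phi(\xi)|\le n\delta_{\rm ball}r_{\phi(\xi)}$ for all removed $\xi$}; the base case, the non-orphaned points, the just-removed point, and the re-anchored points are then the four cases of the inductive step as sketched above. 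A small subtlety to mention is the case where the algorithm halts because only one element of $\Pstat$ remains — then the final iteration still removes one point using the same pair-selection, and the argument applies verbatim; and the inequality $r_{\xi_1}\le r_{\xi_2}=\max(r_{\xi_1},r_{\xi_2})$ is exactly what is needed to turn $|\xi_1-\xi_2|/\max(r_{\xi_1},r_{\xi_2})<\delta_{\rm ball}$ into the usable form $|\xi_1-\xi_2|<\delta_{\rm ball}r_{\xi_2}$ with the surviving point's radius on the right.
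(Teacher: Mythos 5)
Your proof is correct and is essentially the same induction as in the paper: both pick the pair realising the minimum of $d_{\xi_1,\xi_2}$, observe that the removed point is within $\delta_{\rm ball}$ times the survivor's (larger) radius of the survivor, and re-anchor any previously removed point whose anchor was just removed via the triangle inequality, using the fact that the survivor's radius dominates. The only difference is cosmetic (you remove $\xi_1$, the paper removes $\xi_2$), and the ``stronger invariant'' you propose with the map $\phi$ is exactly the bookkeeping the paper's inductive hypothesis already encodes implicitly.
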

\begin{proof}
We proceed by induction on $n$. The result is trivially true for $n=0$. Assume that it is true after the removal of $n$ points, and suppose that the $(n+1)$st point is now to be removed. Let $\xi_1,\xi_2$ denote the pair of points selected as realising $\min_{\xi_1,\xi_2} d_{\xi_1,\xi_2}$, and without loss of generality suppose that $\xi_2$ is the point to be removed (so that $r_{\xi_1}\geq r_{\xi_2}$). Then $|\xi_2-\xi_1|\leq \delta_{\rm ball}r_{\xi_1}\leq (n+1) \delta_{\rm ball}r_{\xi_1}$, so the claimed property holds for $\xi_2$. Furthermore, by the inductive hypothesis, for each point	 $\xi$ previously removed, there exists $\xi'\in\Pstat$ such that $|\xi-\xi'|\leq n\delta_{\rm ball} r_{\xi'}$. If $\xi'\neq \xi_2$ then $\xi'$ will still be present in $\Pstat$ after the removal of $\xi_2$, and $|\xi-\xi'|\leq n\delta_{\rm ball} r_{\xi'}\leq (n+1)\delta_{\rm ball} r_{\xi'}$. On the other hand, if $\xi'=\xi_2$ then 
$\xi'$ will not be present in $\Pstat$ after the removal of $\xi_2$, but $\xi_1$ will be, and by the triangle inequality 
\[|\xi-\xi_1|\leq |\xi-\xi_2|+ |\xi_2-\xi_1|
\leq n\delta_{\rm ball} r_{\xi_2} + \delta_{\rm ball} r_{\xi_1}\leq (n+1)\delta_{\rm ball} r_{\xi_1},\]
completing the inductive step. 
\end{proof}

As a consequence, we obtain the following. 
\begin{corollary}
If $J>2$ and $0<\delta_{\rm ball}\leq 1/(2(J-2))$ then, after the removal algorithm has run, for every stationary point $\xi$ there exists $\xi'\in\Pstat$ such that $\xi\in \Omega_{\xi'}$ and $\dist(\xi,\partial\Omega_{\xi'})\geq r_{\xi'}/2$.
\end{corollary}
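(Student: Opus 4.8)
The plan is to combine the Proposition with two elementary observations: a bound on the total number of stationary points that can be removed, and the fact that the removal criterion controls how deeply a removed point sits inside a surviving ball. First I would count removals. Since $g$ has degree $J$, the derivative $g'$ has degree $J-1$, so $|\Pstat|\le J-1$ initially. The removal algorithm stops once only one element of $\Pstat$ remains, so at most $(J-1)-1 = J-2$ stationary points are ever removed; hence the integer $n$ appearing in the Proposition satisfies $n\le J-2$. (This is where the hypothesis $J>2$ is used, to guarantee $J-2\ge 1$ and that the bound is meaningful.)

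Next I would apply the Proposition directly: for any removed stationary point $\xi$, there exists $\xi'\in\Pstat$ (a \emph{surviving} point) with $|\xi-\xi'|\le n\,\delta_{\rm ball}\,r_{\xi'}\le (J-2)\,\delta_{\rm ball}\,r_{\xi'}$. Now invoke the hypothesis $\delta_{\rm ball}\le 1/(2(J-2))$, which gives $(J-2)\,\delta_{\rm ball}\le 1/2$, and therefore $|\xi-\xi'|\le r_{\xi'}/2$. This immediately yields both claimed conclusions: $|\xi-\xi'|\le r_{\xi'}/2 < r_{\xi'}$ shows $\xi\in\Omega_{\xi'}$ (the closed ball of radius $r_{\xi'}$ about $\xi'$), and since $\dist(\xi,\partial\Omega_{\xi'}) = r_{\xi'}-|\xi-\xi'|\ge r_{\xi'}-r_{\xi'}/2 = r_{\xi'}/2$, the distance bound follows as well. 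For a stationary point $\xi$ that was \emph{not} removed, the statement is trivial with $\xi'=\xi$, since then $|\xi-\xi'|=0$ and $\dist(\xi,\partial\Omega_\xi)=r_\xi\ge r_\xi/2$; I would mention this case briefly for completeness.

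There is essentially no serious obstacle here — the corollary is a packaging of the Proposition together with the crude count $n\le J-2$. The only point requiring a moment's care is the bookkeeping on the number of removals: one must note that the loop terminates either when the minimal normalized distance $\min_{\xi_1,\xi_2} d_{\xi_1,\xi_2}\ge\delta_{\rm ball}$ or when $|\Pstat|=1$, and in the worst case all but one point is removed, giving exactly $J-2$ removals when $g'$ has the maximal number $J-1$ of roots. Everything else is a one-line triangle-inequality computation, so I would keep the writeup short.
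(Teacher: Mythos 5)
Your argument is correct and is exactly the reasoning the paper leaves implicit behind the phrase ``as a consequence'': $g'$ has degree $J-1$, so at most $J-2$ removals can occur before the loop halts, and substituting $n\le J-2$ together with $\delta_{\rm ball}\le 1/(2(J-2))$ into the Proposition's bound $|\xi-\xi'|\le n\delta_{\rm ball}r_{\xi'}$ gives $|\xi-\xi'|\le r_{\xi'}/2$, from which both conclusions follow. Your handling of the non-removed case by taking $\xi'=\xi$ is the right way to close the statement for all stationary points.
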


\subsection{Region of no return for SD contours}
\label{sec:NoReturn}
The following result establishes a \emph{region of no return}: once an SD contour enters this region, we can say with certainty which valley it will converge to. 
The idea behind this result is that in the region of no return the highest degree term $\alpha_J z^J$ of the polynomial $g$ is sufficiently dominant over the lower degree terms that the SD contours inside the region converge to the same valley as those corresponding to the monomial phase $\alpha_J z^J$. 

\begin{theorem}[Region of no return]
\label{thm:NoReturn}
Let $g$, $\cV$ and $R_v$, for $v\in \cV$, be as in \eqref{eq:gfull}, \eqref{eq:val} and \eqref{eq:RegOfNoReturn}. The regions $R_v$, $v\in \cV$, contain no stationary points of $g$. Furthermore, if an SD contour enters $R_v$ for some $v\in\cV$, it never leaves $R_v$.
\end{theorem}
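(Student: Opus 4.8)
The plan is to work in polar coordinates $z = re^{\ii\phi}$ with $\phi = \arg z$, and to track the two quantities that matter along an SD contour: the ``angular distance to the valley'' $\theta := |\arg z - v|_{2\pi}$ and the modulus $r = |z|$. I would establish two facts: (a) the regions $R_v$ contain no stationary points, and (b) $R_v$ is forward-invariant under the SD flow. For (a), observe that on $R_v$ we have $\theta < \pi/(2J)$ and $G(r,\theta) > 0$, and since $G(r,\theta) \leq J|\alpha_J| r^{J-1} - \sum_{j=1}^{J-1} j|\alpha_j| r^{j-1}$, positivity of $G$ forces $|g'(z)| \geq J|\alpha_J| r^{J-1} - \sum_{j=1}^{J-1} j|\alpha_j| r^{j-1} \geq G(r,\theta) > 0$ by the reverse triangle inequality applied to $g'(z) = \sum_{j=1}^{J} j\alpha_j z^{j-1}$; hence $g'(z) \neq 0$, so $z \notin \Pstat$.

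For (b), the key geometric picture is that an SD contour is a curve of constant $\re g$ along which $-\im g$ is strictly decreasing, i.e.\ $\im g$ is strictly increasing. I would parametrize by $p$ as in \eqref{eq:SDparam}, so $\dfrac{\dd}{\dd p} g(h(p)) = \ii$, and compute how $\theta$ and $r$ evolve. The heuristic is that in $R_v$ the monomial $\alpha_J z^J$ dominates, and for the pure monomial phase the SD contours through a point with $\theta < \pi/(2J)$ move \emph{towards} the valley (decreasing $\theta$) while $r$ increases; the function $G$ is precisely the quantitative margin by which the lower-order terms fail to reverse this. Concretely, I would show that on $\partial R_v$ the SD flow points strictly inward. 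The boundary of $R_v$ has two pieces: the angular walls $\theta = \pi/(2J)$ and the curved part $\{G(r,\theta) = 0\}$ (together with, implicitly, $r$ bounded below by $r_*$ where $G(r_*,\pi/(4J))=0$, guaranteeing $R_v$ is ``far out''). On the angular walls, one shows $\dfrac{\dd}{\dd p}\theta < 0$, i.e.\ the contour is pushed back toward $v$; this is where the $\cos J\theta$ term in \eqref{eq:RegFunction} enters, since at $\theta = \pi/(2J)$ one has $\cos J\theta = 0$ and the relevant component of the flow is controlled by $|\alpha_J| r^{J-1} \sin J\theta$ versus the lower-order terms. On the curved boundary, one shows the flow cannot cross outward by showing that wherever $G > 0$ the modulus $r$ is strictly increasing along the contour (so the contour is swept to larger $r$, and since $G(r,\theta)$ is increasing in $r$ for $r$ large — again using $r \geq r_*$ — it stays positive), while simultaneously $\theta$ stays below $\pi/(2J)$ by the angular-wall argument. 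Convergence to $v$ then follows: $r \to \infty$ (since $r$ is increasing and cannot be bounded, as $\im g$ would then be bounded contradicting its strict unbounded growth along the contour) and $\theta \to 0$ (monotone and bounded, with the limit forced to $0$ since any positive limiting $\theta$ with $r\to\infty$ would eventually violate $G>0$ unless $\cos J\theta$ stays bounded away from $0$ — a compactness/limiting argument pins $\theta \to 0$).

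\textbf{Main obstacle.} The hard part will be the careful sign analysis of $\dfrac{\dd}{\dd p}\theta$ and $\dfrac{\dd}{\dd p} r$ along the contour in terms of $g'(z)$ and the individual monomials. From $\dfrac{\dd}{\dd p} g(h(p)) = \ii$ we get $h'(p) = \ii / g'(h(p))$, so $\dfrac{\dd}{\dd p}|h|^2 = 2\re(\bar h h') = 2\re(\ii \bar h / g'(h)) = 2\,\im(\bar h / \overline{g'(h)})$ and similarly $\dfrac{\dd}{\dd p}\arg h = \im(h'/h) = \re(1/(h\,g'(h)))$ up to sign bookkeeping; one must then expand $h\, \overline{g'(h)}$ (or $1/(h g'(h))$), isolate the top-degree contribution $J|\alpha_J|^2 r^{2J-1}$-type terms with the correct trigonometric factor, and bound the cross terms by $\sum_{j<J} j|\alpha_j| r^{j-1}$ times the appropriate power — exactly reproducing the structure of $G$ in \eqref{eq:RegFunction}, including the slightly mysterious $\min(1/\sqrt 2, \cos J\theta)$, which I expect comes from needing a single clean lower bound valid both near the angular wall (where $\cos J\theta$ is small) and deeper inside (where one wants the uniform constant $1/\sqrt 2$, consistent with the definition of $r_*$ via $\theta = \pi/(4J)$). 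Getting all these inequalities to line up with the exact form of $G$ is the technical crux; the topological conclusion (forward invariance plus convergence) is then a routine consequence of the flow pointing inward on $\partial R_v$ together with monotonicity of $r$ and $\theta$.
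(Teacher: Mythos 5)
Your part (a) argument (reverse triangle inequality on $g'$, using $\min(1/\sqrt 2,\cos J\theta)\leq 1$) is correct and matches the paper, and your polar-coordinate computations of $dr/dp$, $d\theta/dp$ via $h'(p)=\ii/g'(h(p))$, together with the strategy of bounding the top-degree term against the lower-order ones, are exactly the right computational ingredients and reappear in the paper's proof.

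However, the forward-invariance argument has a genuine gap at the curved boundary $\{G(r,\theta)=0\}$. You argue that $G$ stays positive because $r$ increases along the flow and $G$ is increasing in $r$, while $\theta$ remains below $\pi/(2J)$. But $G$ also decreases in $\theta$ (through $\min(1/\sqrt 2,\cos J\theta)$), and $\theta$ is \emph{not} monotone in $R_v$: on the central ray $\theta=0$ the top-degree contribution to $d\theta/dp$ vanishes (it scales like $\sin J\theta$), so lower-order terms can push $\theta$ upward; and since the threshold radius for $G>0$ diverges as $\theta\to\pi/(2J)$, you cannot outrun the problem by sending $r\to\infty$. What is needed at a point of $\{G=0\}$ is a sign on $\partial_r G\,dr/dp+\partial_\theta G\,d\theta/dp$, and your argument controls only the first term. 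The paper sidesteps the curved boundary entirely via a decomposition: it introduces a two-parameter family of truncated sectors $S_v(R,\theta')=\{z:|\arg z-v|_{2\pi}<\theta',\ |z|>R\}$ together with a threshold function $\tilde G(R,\theta')$, and shows each sector with $\tilde G(R,\theta')>0$ is forward-invariant. Each such sector has boundary consisting of just an arc and two rays, where precisely your inequalities $\Im\,\partial_r g>0$ and $\mp(1/r)\Im\,\partial_\theta g>0$ apply (with a monotonicity-in-$r$ lemma for the auxiliary polynomial $\phi(r)=J|\alpha_J|r^{J-1}\sin J\theta'-\sum_{j<J}j|\alpha_j|r^{j-1}$ upgrading the ray estimate from $r=R$ to all $r\geq R$). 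Since $R_v$ is the union of these sectors, a contour that enters $R_v$ enters one of them, never leaves that sector, and hence never leaves $R_v$. This union-of-sectors device is the missing structural idea in your write-up; it is also what explains the $\min(\sin J\theta',\cos J\theta')$ in $\tilde G$ and the restriction to $\theta'\geq\pi/(4J)$, which together produce the $\min(1/\sqrt 2,\cos J\theta)$ appearing in $G$.
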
		

\begin{proof}
That $R_v$ contains no stationary points follows because if $G(r,\theta)>0$ then
\[ J|\alpha_j||z|^{J-1}>\sum_{j=1}^{J-1} j|\alpha_j||z|^{j-1} 
\geq \left| \sum_{j=1}^{J-1} j\alpha_jz^{j-1}\right|,
\]
so that $g'(z)\neq0$.

Now fix $v\in \cV$. Given $\theta'\in(0,\pi/(2J))$ and $R>0$ we define the sector
\[S_v(R,\theta'):=\{z\in\C: |\arg{z}-v|_{2\pi}<\theta' \text{ and }|z|>R\}, \]
with $|\cdot|_{2\pi}$ defined as in \eqref{eq:AbsTheta}. 
We also define the function 
\begin{align}
\label{eq:RegFunction2}
\tilde{G}(R,\theta') := |J||\alpha_J|R^{J-1}\min\left(\sin{J \theta'},\cos{J\theta'}\right) - 
\sum_{j=1}^{J-1} j|\alpha_j|R^{j-1},
\end{align}
which for each fixed $\theta'$ is a polynomial in $R$ of degree $J-1$.

We claim that if $\theta'\in(0,\pi/(2J))$ and 
$\tilde{G}(R,\theta')>0$, then if an SD contour enters $S_v(R,\theta')$ it never leaves $S_v(R,\theta')$. 
To prove this, we show that if an SD contour intersects $\partial S_v(R,\theta')$ then the direction of descent always points into $S_v(R,\theta')$. 
Since $\partial S_v(R,\theta')$ is the union of the sets
\[\{z\in\C:|\arg z-v|_{2\pi}\leq \theta'\text{ and }|z|=R\} \] 
and
\[ \{z\in\C:|\arg z-v|_{2\pi}=\theta'\text{ and }|z|\in[R,\infty)\},\]
it suffices to show that, in polar coordinates $(r,\theta)$, 
			\begin{align}
			&\Im\frac{\partial g}{\partial r}>0,\quad\text{for }|\theta-v|_{2\pi}\leq \theta' \text{ and } r=R,\label{eq:case1}\\
			&\mp\Im\frac{1}{r}\frac{\partial g}{\partial \theta}>0,\quad\text{for }\theta=v\pm \theta' \text{ (mod $2\pi$) and } r\geq R.\label{eq:case2}
			\end{align}
For \eqref{eq:case1}, let $|\theta-v|_{2\pi}\leq \theta'$. Since
			\begin{align*}
			\frac{\partial g(r\e^{\ii \theta})}{\partial r}&=\sum_{j=1}^J j\alpha_{j}\e^{\ii j\theta}r^{j-1}
			\end{align*}
and
$\Im[\alpha_J \e^{\ii J \theta}] = |\alpha_J|\cos{(J|\theta-v|_{2\pi}})$ 
(using the definition of $v$)
we have that
			\begin{align*}
			\Im\frac{\partial g(r\e^{\ii \theta})}{\partial r} &\geq J|\alpha_{J}|r^{J-1}\cos(J|\theta-v|_{2\pi}) - \sum_{j=1}^{J-1} j|\alpha_{j}|r^{j-1},
			\end{align*}
	so a sufficient condition for \eqref{eq:case1} to hold is that
\begin{align}
\label{eq:Cond1}
J|\alpha_{J}|R^{J-1}\cos(J\theta') - \sum_{j=1}^{J-1} j|\alpha_{j}|R^{j-1}>0.
			\end{align}			
For \eqref{eq:case2}, let $\theta=v\pm\theta'$ (mod $2\pi$). Since
			\begin{align*}
			\frac{1}{r}\frac{\partial g(r\e^{\ii \theta})}{\partial \theta}&=\sum_{j=1}^J \ii j\alpha_{j}\e^{\ii j\theta}r^{j-1},
			\end{align*}
and
$\Im[\ii\alpha_J \e^{\ii J \theta}]=\Im[\ii\alpha_J \e^{\ii J (v\pm\theta')}] = \mp|\alpha_J|\sin(J\theta')$ 
we have that
	\begin{align*}
	\left.\mp\Im\frac{1}{r}\frac{\partial g(r\e^{\ii \theta})}{\partial \theta}\right|_{\theta=v\pm\theta'} &\geq J|\alpha_{J}|r^{J-1}\sin(J\theta') - \sum_{j=1}^{J-1} j|\alpha_{j}|r^{j-1}=:\phi(r).
			\end{align*}
The function $\phi(r)$ has the property that if $R>0$ and $\phi(R)>0$ then $\phi(r)>0$ for all $r\geq R$. 
To see this, note that
\[ \phi(r) = r^{J-1}\Big(J|\alpha_{J}|\sin(J\theta') - \sum_{j=1}^{J-1} j|\alpha_{j}|r^{j-J}\Big),\]
and that the term in brackets is a strictly decreasing function of $r$, which tends to $-\infty$ as $r\to 0$ and to $J|\alpha_{J}|\sin(J\theta')>0$ as $r\to\infty$. 
Hence a sufficient condition for \eqref{eq:case2} is that $\phi(R)>0$, i.e.
\begin{align}
\label{eq:Cond2}
J|\alpha_{J}|R^{J-1}\cos(J\theta') - \sum_{j=1}^{J-1} j|\alpha_{j}|R^{j-1}>0.
			\end{align}	
Since the assumption $\tilde{G}(R,\theta')>0$ implies both \eqref{eq:Cond1} and \eqref{eq:Cond2}, our claim is proved. 

The statement of the theorem then follows 
by noting that the region $R_v$ is the union of all the sectors $S_v(R,\theta')$ such that $0<\theta'\leq \pi/(2J)$ and $\tilde{G}(R,\theta')>0$. 
We note that if $0<\theta'<\pi/(4J)$ then $\sin{J\theta'}<\sin{\pi/4}$, so that if $\tilde{G}(R,\theta')>0$ then $\tilde{G}(R,\pi/(4J))>0$. This implies that the union can actually be taken over $\pi/(4J)\leq \theta'<\pi/(2J)$ only, justifying the definition of the function $G$ in \eqref{eq:RegFunction}.
\end{proof}

\subsection{Quadrature error}
\label{sec:QuadError}
In \S\ref{sec:DefNonOsc} we defined the non-oscillatory region as a union of balls on which the exponential $\ee^{\ii \omega g(z)}$ undergoes a bounded number of oscillations. {In this section} we show that the definition \eqref{eq:rxiDef} strikes a balance between the accuracy of our quadrature approximations to the integrals outside and inside this region. 

{We note that, as already mentioned in \S\ref{sec:intro}, in contrast to standard NSD approximations the error in our method does not in general decay as $\omega\to\infty$. In the special case where we are tracing a single infinite SD contour from a fixed ($\omega$-independent) endpoint to a valley at infinity, without stationary points nearby, the error in our method would indeed decay as $\omega\to\infty$ in the same way as for standard NSD (e.g., \cite[Thm.~5.5]{DeHuIs:18}), up to errors introduced by our ODE for SD contour tracing. However, in the general case 
our algorithm traces steepest descent contours from points on the edges of the non-oscillatory balls, whose radii depend in a non-trivial way on $\omega$ and the distribution of stationary points, so the standard NSD theory does not apply. Also, the NSD-style Gauss-Laguerre quadrature along SD contours forms just one step in our algorithm, and other steps (such as the quadrature inside the non-oscillatory region) have their own non-trivial $\omega$-dependence. As a result, the $\omega$-dependence of the overall error is much harder to predict for our algorithm than for standard NSD in the simplest setting. 
Nonetheless, in practice we observe errors that remain bounded as $\omega\to \infty$ for fixed $N$. We shall provide some theoretical justification for this below, and back this up with extensive numerical evidence in \S\ref{sec:Numerics}.
}

\subsubsection{Quadrature in the non-oscillatory region}
\label{sec:Non-Osc}
The Type 1 straight line contour integrals between points in the non-oscillatory region are evaluated using Gauss-Legendre quadrature, as detailed in \S\ref{sec:Type1}. 
To assess the accuracy of this we note the following theorem, which is a simple consequence of the standard error analysis presented in \cite[Chap.~19]{Tr:13}. 
\begin{theorem}
\label{thm:GauLeg}
Let $z_0,z_1\in\C$. Suppose that $\gamma$ is a straight-line contour in $\C$ starting at $z_0$ and ending at $z_1$ and that there exists $\rho>0$, $C>0$ and $\xi_*\in\C$ 
such that $f$ is analytic and bounded in $z_{[z_0,z_1]}(B_\rho)$, where $B_\rho$ is a standard Bernstein ellipse (relative to $[-1,1]$) and $z_{[z_0,z_1]}$ is defined as in \eqref{eq:zDef},   
and 
\begin{align}
 \label{eq:GauLegCond}
 \omega|g(\xi_*)-g(z)|\leq C, \qquad z\in z_{[z_0,z_1]}(B_\rho).
 \end{align} 
Let $I$ and $Q$ denote the left- and right-hand sides of \eqref{eq:GauLegApplication}, respectively. 
Then, for some $\tilde{C}>0$, depending only on $\rho$, 
\begin{align}
\label{eq:GauLegBound}
\left| I-Q \right| \leq \tilde{C}|z_1-z_0|\|f\|_{L^\infty(z_{[z_0,z_1]}(B_\rho))}\ee^{-\omega \Im[g(\xi_*)]}\ee^{C}\rho^{-2N}.
\end{align}
\end{theorem}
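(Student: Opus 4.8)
The plan is to reduce the bound \eqref{eq:GauLegBound} to the standard convergence estimate for Gauss--Legendre quadrature applied to an analytic integrand on $[-1,1]$, as in \cite[Chap.~19]{Tr:13}. First I would change variables in the integral $I$ using the parametrization \eqref{eq:zDef}, writing
\begin{align*}
I = \int_\gamma f(z)\ee^{\ii\omega g(z)}\,\dd z = \frac{z_1-z_0}{2}\int_{-1}^1 F(t)\,\dd t, \qquad F(t):= f(z_{[z_0,z_1]}(t))\ee^{\ii\omega g(z_{[z_0,z_1]}(t))},
\end{align*}
so that $Q$ is precisely the $N$-point Gauss--Legendre rule applied to $\frac{z_1-z_0}{2}\int_{-1}^1 F(t)\,\dd t$. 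The classical result (e.g.\ \cite[Thm.~19.3]{Tr:13}) states that if $F$ is analytic inside and on the Bernstein ellipse $B_\rho$ then the $N$-point Gauss--Legendre error is bounded by a constant (depending only on $\rho$) times $\|F\|_{L^\infty(B_\rho)}\rho^{-2N}$. Hence it remains only to verify that $F$ extends analytically to $z_{[z_0,z_1]}(B_\rho)$ and to estimate $\|F\|_{L^\infty(B_\rho)}$.

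The analyticity is immediate: $z_{[z_0,z_1]}$ is an affine (hence entire) map, $g$ is a polynomial, and $f$ is assumed analytic and bounded on $z_{[z_0,z_1]}(B_\rho)$, so $F$ is analytic on $B_\rho$. For the sup-norm estimate, I would split
\begin{align*}
|F(t)| = |f(z_{[z_0,z_1]}(t))|\,\bigl|\ee^{\ii\omega g(z_{[z_0,z_1]}(t))}\bigr| \leq \|f\|_{L^\infty(z_{[z_0,z_1]}(B_\rho))}\,\ee^{-\omega\Im[g(z_{[z_0,z_1]}(t))]},
\end{align*}
and then control the exponent by writing $\Im[g(z)] = \Im[g(\xi_*)] + \Im[g(z)-g(\xi_*)] \geq \Im[g(\xi_*)] - |g(z)-g(\xi_*)|$; the hypothesis \eqref{eq:GauLegCond} gives $\omega|g(z)-g(\xi_*)|\leq C$ for $z\in z_{[z_0,z_1]}(B_\rho)$, so $-\omega\Im[g(z)]\leq -\omega\Im[g(\xi_*)] + C$. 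Combining, $\|F\|_{L^\infty(B_\rho)} \leq \|f\|_{L^\infty(z_{[z_0,z_1]}(B_\rho))}\ee^{-\omega\Im[g(\xi_*)]}\ee^{C}$. Plugging this into the classical error bound and absorbing the factor $|z_1-z_0|/2$ and the $\rho$-dependent constant into $\tilde{C}$ yields exactly \eqref{eq:GauLegBound}.

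I do not anticipate a genuine obstacle here — the statement is explicitly flagged as ``a simple consequence'' of \cite[Chap.~19]{Tr:13}. The only point requiring a modicum of care is bookkeeping: making sure the Bernstein ellipse is taken relative to $[-1,1]$ (as stated) and that its affine image $z_{[z_0,z_1]}(B_\rho)$ is the domain on which $f$ is assumed analytic, so that the hypotheses align with the form of $F$ after the change of variables; and tracking that the constant $\tilde{C}$ depends only on $\rho$ (the classical constant is $4/(\rho-1)$ for the $2N$-rate normalization, times the harmless $1/2$), with everything else appearing explicitly as stated factors. The $\ee^C$ factor is the quantitative manifestation of the ``bounded number of oscillations'' design principle: because $\omega|g(z)-g(\xi_*)|$ is controlled on the whole Bernstein ellipse, not just on the contour, the quadrature error does not degrade with $\omega$ beyond the benign $\ee^{-\omega\Im[g(\xi_*)]}$ amplitude factor.
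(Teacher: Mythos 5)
Your argument is correct and matches the paper's own proof: both reduce to Trefethen's Theorem 19.3 by bounding the transplanted integrand $f(z)\ee^{\ii\omega(g(z)-g(\xi_*))}$ on the affine image of the Bernstein ellipse using hypothesis \eqref{eq:GauLegCond}, with the $\ee^{-\omega\Im[g(\xi_*)]}$ factor emerging from the extracted constant $\ee^{\ii\omega g(\xi_*)}$. The only cosmetic difference is that the paper factors this constant out of the integral before applying the bound, whereas you keep it inside the exponent and separate it at the end.
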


\begin{proof}
Noting that 
\[I = \ee^{\ii \omega g(\xi_*)}\int_{\gamma}f(z)\ee^{\ii\omega (g(z)-g(\xi_*))}\,\dd z\]
and that
\[ |f(z)\ee^{\ii\omega (g(z)-g(\xi_*))}|\leq \|f\|_{L^\infty(z_{[z_0,z_1]}(B_\rho))} \ee^{C}, \qquad z\in z_{[z_0,z_1]}(B_\rho), \]
the result follows from \cite[Thm 19.3]{Tr:13}.
\end{proof}

Theorem \ref{thm:GauLeg} motivates the definition of the non-oscillatory region in \eqref{eq:rxiDef}. Indeed, if the assumptions of Theorem \ref{thm:GauLeg} hold with $\rho$ and $C$ independent of $\omega$ then the bound \eqref{eq:GauLegBound} guarantees $\omega$-independent exponential convergence for $\omega$ bounded away from zero. 
However, even when \eqref{eq:rxiDef} is satisfied, 
the relationship between $\xi_*$, $\rho$, $C$ and $\omega$ is beyond our control in general because the ellipse may extend beyond the non-oscillatory region, so that $C>C_{\rm ball}$. Thus we cannot control the factor $\ee^C$ entirely based on condition \eqref{eq:rxiDef}.

Still, the bound \eqref{eq:GauLegBound} shows that the quadrature error decreases with increasing $N$. The precise rate of decrease depends on a balance between the decay of $\rho^{-2N}$ and the growth of $\ee^C$ and $\|f\|_{L^\infty(z_{[z_0,z_1]}(B_\rho))}$ for increasing $\rho$. We quantify this in the special case of monomial phase in \S\ref{sec:Monomial}.

\subsubsection{Quadrature for the SD contours}		
\label{sec:SDContours}

For Type 2 or Type 3 integrals along SD contours we use either Gauss-Laguerre or (possibly truncated) Gauss-Legendre quadrature, as detailed in \S\ref{sec:Type2} and \S\ref{sec:Type3}. We expect these rules to converge rapidly to the true value of the integral as the number of quadrature points $N$ tends to infinity, provided that the integrand is analytic and bounded in a suitable region of the complex $\tilde{p}$ plane. 

For Gauss-Laguerre the following result appeared recently in \cite[Thm~6.3]{Wang23}.

\begin{theorem}
\label{thm:GauLag}
Suppose that $\tilde{f}$ is analytic inside and on the parabola $P_\rho:=\{z\in\C:\sqrt{-z}=\rho\}$ for some $\rho>0$, where the branch cut is along the positive real axis and $\sqrt{-z}$ is real and positive on the negative real axis, that $\tilde{f}$ grows at most algebraically as $z\to\infty$ inside the parabola, and that the integral 
\[\mathcal{K}_\rho:=\int_{P_\rho}|\ee^{-z}\sqrt{-z}\tilde{f}(z)|\,\dd z\]
is finite. 
Let $I$ and $Q$ denote the left- and right-hand sides of \eqref{eq:GauLagApplication}, respectively. 
Then
\begin{align}
\label{eq:GauLagBound}
\left| I-Q \right| \leq \mathcal{K}_\rho\frac{\ee^{-\omega\Im[ g(\eta)]}}{\omega}\ee^{-4\rho\sqrt{N}}.
\end{align} 
\end{theorem}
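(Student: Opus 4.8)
The plan is to use the classical contour-integral representation of the Gaussian quadrature error functional, with the Bernstein ellipse of the finite-interval theory (cf.\ Theorem~\ref{thm:GauLeg}) replaced by the parabola $P_\rho$; this is the Laguerre analogue of that analysis, and is carried out in detail in \cite{Wang23}. As a first step I would reduce to the pure quadrature error: by \eqref{eq:SDIntegralInfinite} and \eqref{eq:GauLagApplication},
\[
I-Q=\frac{\ee^{\ii\omega g(\eta)}}{\omega}\,E_N[\tilde f],\qquad
E_N[\tilde f]:=\int_0^\infty\tilde f(x)\ee^{-x}\,\dd x-\sum_{m=1}^N w_m^{\mathrm{Lag}}\tilde f(t_m^{\mathrm{Lag}}),
\]
and since $|\ee^{\ii\omega g(\eta)}|=\ee^{-\omega\Im[g(\eta)]}$ it suffices to prove $|E_N[\tilde f]|\le\mathcal{K}_\rho\,\ee^{-4\rho\sqrt N}$.

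Next I would set up the kernel representation. Let $L_N$ denote the degree-$N$ Laguerre polynomial, whose zeros are exactly the nodes $t_m^{\mathrm{Lag}}$, and let $q_N(z):=\int_0^\infty\frac{L_N(x)\ee^{-x}}{z-x}\,\dd x$ be the associated function of the second kind, which is analytic on $\C\setminus[0,\infty)$ and satisfies $q_N(z)=\mathcal{O}(z^{-N-1})$ as $z\to\infty$ (by orthogonality of $L_N$ to all lower-degree polynomials). The ratio $K_N:=q_N/L_N$ has simple poles only at the $t_m^{\mathrm{Lag}}\in(0,\infty)$, with residues equal to the Gauss--Laguerre weights, $w_m^{\mathrm{Lag}}=q_N(t_m^{\mathrm{Lag}})/L_N'(t_m^{\mathrm{Lag}})$, and the Plemelj jump of $q_N$ across the cut is $q_N(x+\ii 0)-q_N(x-\ii 0)=-2\pi\ii\,L_N(x)\ee^{-x}$. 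Deforming a contour hugging $[0,\infty)$ outwards onto $P_\rho$ — which is legitimate because $\tilde f$ is analytic inside and on $P_\rho$, the nodes and the ray $[0,\infty)$ lie in the interior of the region bounded by $P_\rho$, and the $\mathcal{O}(z^{-N-1})$ decay of $q_N$ against the at-most-algebraic growth of $\tilde f$ kills the contribution at infinity — then yields
\[
E_N[\tilde f]=\frac{1}{2\pi\ii}\oint_{P_\rho}K_N(z)\,\tilde f(z)\,\dd z .
\]

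The crux of the proof is then the pointwise kernel bound, uniform on $P_\rho$,
\[
|K_N(z)|\;\le\;|\ee^{-z}\sqrt{-z}|\;\ee^{-4\rho\sqrt N},
\]
with normalisation constants absorbed as in \cite{Wang23}. Writing $w:=\sqrt{-z}$, so that $P_\rho=\{-w^2:\re w=\rho\}$, this comes from Plancherel--Rotach / Perron-type asymptotics of the Laguerre functions: off $[0,\infty)$ the polynomial $L_N(z)$ behaves like $\ee^{z/2+2\sqrt N\,w}$ and the recessive solution $q_N(z)$ like $\ee^{-z/2-2\sqrt N\,w}$, up to factors algebraic in $N$ and $w$ (the surviving $\ee^{-z}$ can be tracked through the Wronskian/norm identity $q_N/L_N=\sum_{k\ge N}c_k/(L_kL_{k+1})$, which produces an $\ee^{-z}$ together with $\ee^{-4\sqrt k\,w}$), so on $P_\rho$ the ratio is bounded by $\ee^{-4\sqrt N\re w}=\ee^{-4\rho\sqrt N}$ times $|\ee^{-z}|$ and an algebraic factor dominated by $|\sqrt{-z}|$ (using $|\sqrt{-z}|\ge\re\sqrt{-z}=\rho>0$ on $P_\rho$). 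Substituting into the representation above,
\[
|E_N[\tilde f]|\le\frac{1}{2\pi}\oint_{P_\rho}|K_N(z)|\,|\tilde f(z)|\,|\dd z|
\le\ee^{-4\rho\sqrt N}\int_{P_\rho}|\ee^{-z}\sqrt{-z}\,\tilde f(z)|\,\dd z=\mathcal{K}_\rho\,\ee^{-4\rho\sqrt N},
\]
which together with the reduction step completes the proof.

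The main obstacle is the uniform kernel bound: one needs Laguerre asymptotics valid simultaneously in the far field $|z|\to\infty$ and through the vertex region near $z=0$, where the behaviour is Bessel-type rather than purely exponential, and sharp enough to give the exact exponent constant $4$ and the exact weight $\ee^{-z}\sqrt{-z}$ appearing in $\mathcal{K}_\rho$. Making this rigorous is precisely the content of \cite[Thm~6.3]{Wang23}, which I would invoke rather than reprove; everything else (the reduction, the residue computation of the weights, the Plemelj jump, and the contour deformation justified by analyticity plus decay) is routine.
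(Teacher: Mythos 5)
Your proposal is correct and takes essentially the same route as the paper: the paper states Theorem~\ref{thm:GauLag} with no proof at all, attributing it directly to \cite[Thm~6.3]{Wang23}, and you likewise defer the hard kernel estimate to that reference. The reduction $I-Q=\frac{\ee^{\ii\omega g(\eta)}}{\omega}E_N[\tilde f]$ with $|\ee^{\ii\omega g(\eta)}|=\ee^{-\omega\Im[g(\eta)]}$, and your sketch of the kernel-representation and contour-deformation argument underlying \cite{Wang23}, are accurate, but the paper itself simply treats the whole statement as a cited result.
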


This result implies that our Gauss-Laguerre quadrature approximation should converge root-exponentially as $N\to\infty$, provided that $f$ is sufficiently well-behaved at infinity. 
The presence of singularities in the complex $\tilde{p}$-plane limits the size of $\rho$, and hence the convergence rate. 
We know from \eqref{eq:SDIntegralInfinite} that our integrand is singular at points $\tilde{p}\in\C$ where $g'(h_\eta(\tilde{p}/\omega))=0$, i.e. where $h_\eta(\tilde{p}/\omega)=\xi$ for some stationary point $\xi$. Since we only trace SD contours outside the non-oscillatory region (which contains the stationary points), we know that there cannot be singularities on the SD contour itself. 
If the start point $\eta$ lies on an SD contour emanating from a stationary point $\xi$ then we expect there to be a singularity in the $\tilde{p}$-plane at $\tilde{p}=\omega\Im[g(\xi)-g(\eta)]<0$. We show in \S\ref{sec:Monomial} that in the special case of monomial phase this singularity lies at $\tilde{p}=-C_{\rm ball}$, which implies root-exponential convergence independent of $\omega$ for $\omega$ bounded away from zero. Determining the locations of the other possible singularities in the complex $\tilde{p}$-plane is more challenging, since it involves study of the (multivalued) inverse of $g$. We leave further theoretical investigation of this to future work.

\subsubsection{Results for monomial phase}		
\label{sec:Monomial}

It is instructive to consider the special case of a monomial phase $g(z)=z^J$ for some $J\in\N$. In this case there is a single stationary point of order $J-1$ at $\xi=0$, and $g(0)=0$.  
Following the prescription \eqref{eq:rxiDef}, we obtain a ball radius 
\[ r_0=(C_{\rm ball}/\omega)^{1/J}.\]

We first consider a Type 1 integral in the non-oscillatory region. For simplicity we choose $f(z)\equiv 1$.  Specifically, we consider the evaluation of the integral 
\[ \int_0^{r_0\ee^{\ii \theta}} \ee^{\ii \omega g(z)}\,\dd z,\]
for some $\theta\in[0,2\pi]$. 
Taking $\xi_*=0$, we can apply Theorem \ref{thm:GauLeg} with any $\rho>1$, and the resulting scaled and translated Bernstein ellipse surrounding $[0,r_0\ee^{\ii\theta}]$ is tightly contained in the disc $|z|\leq sr_0$, where $\rho$ and $s$ are related by
\[ \rho = 2s-1+\sqrt{(2s-1)^2-1} =2s-1+2\sqrt{s^2-s}.\]
Hence condition \eqref{eq:GauLegCond}\ is satisfied, independently of $\theta$, with 
\[ C = C_{\rm ball} s^J,\]
which is independent of $\omega$ but dependent on $J$.  
When $s$ is large, we have $\rho\approx 4s$, and in this regime the error bound provided by \eqref{eq:GauLegBound} for Gauss-Legendre quadrature is approximately proportional to 
\[ (C_{\rm ball}/\omega)^{1/J}\ee^{C_{\rm ball} s^J}(4s)^{-2N}.\]
As a function of $s$, with $J$ and $N$ fixed, this quantity is minimised where its $s$-derivative vanishes, which occurs where
\[ C_{\rm ball}Js^J -2N=0,\]
i.e.\ where
\[ s = \left(\frac{2N}{C_{\rm ball}J}\right)^{1/J}.\]
Accordingly, the error bound is approximately proportional to
\[ (C_{\rm ball}/\omega)^{1/J} 16^J \left(\frac{8eN}{C_{\rm ball}J}\right)^{-2N/J}.\]
Thus we expect super-exponential convergence as $N\to\infty$ for fixed $J$. However, we expect the convergence to be slower the larger $J$ is. 

Next we consider a Type 2 integral over an SD contour, again with $f(z)\equiv 1$. Specifically, we consider the evaluation of the integral 
\[ \int_{r_0\ee^{\ii v}}^{\infty\ee^{\ii v}} \ee^{\ii \omega g(z)}\,\dd z,\]
where $v=((2j+1/2)\pi)/J$ for some $j\in\{1,\ldots,J\}$.
Following our method, the contour is parametrized by 
\[ h_\eta(p) = (r_0^J + p)^{1/J}\ee^{\ii v}, \qquad p\in [0,\infty), \]
and, recalling \eqref{eq:ODE} and \eqref{eq:SDIntegralInfinite}, after rescaling $p=\tilde{p}/\omega$ 
the integral becomes
\[ \frac{\ee^{-C_{\rm ball}}\ee^{\ii v}}{\omega^{1/J} J}\int_0^\infty (C_{\rm ball} + \tilde{p})^{1/J - 1}\ee^{-\tilde{p}}\,\dd \tilde{p}.\]
The integrand has a branch point at
\[ \tilde{p} = -C_{\rm ball},\]
but we note that the distance between the branch point and the positive real $\tilde{p}$-axis equals $C_{\rm ball}$, which is independent of both $\omega$ and $J$.
 
For truncated Gauss-Legendre the relevant theory can be found in \cite[Chap.~19]{Tr:13} (and see also \cite{Tr:22}). 
Due to the branch point at $\tilde{p}=-C_{\rm ball}$, as $N\to\infty$ we obtain exponential convergence to the integral over the interval $[0,P]$, where $P$ is given by either \eqref{eq:PDef1} or \eqref{eq:PDef2}. 
In the case where $P=L$, by the definition of $L$ in \eqref{eq:LDef}, we expect the truncation error to have relative order $\delta_{\rm quad}$.

\subsubsection{Number and distribution of quadrature points}
\label{sec:NumberOfQuadPoints}

PathFinder uses a fixed number $N$ of quadrature points on each contributing contour, and that number is the same both for integrals within and outside the non-oscillatory region, i.e., for Gauss--Legendre and Gauss--Laguerre quadrature. Thus, increasing the single parameter $N$ provides a way of uniformly improving accuracy.

The theoretical results in this section (specifically, Theorems \ref{thm:GauLeg} and \ref{thm:GauLag}) imply that the precise rate of improvement with respect to $N$ depends on the type of integral being approximated. They suggest even that a different strategy for the distribution of quadrature points may be superior. Indeed, exponential convergence of Gauss--Legendre 
for Type 1 integrals in the non-oscillatory region is not balanced with root-exponential convergence of Gauss--Laguerre for Type 2 integrals outside. Similarly, convergence rates of Gauss-Laguerre and truncated Gauss-Legendre outside the non-oscillatory region are different. Our choice of a fixed parameter $N$ is inspired on the one hand by simplicity, and on the other hand by the lack of robust methods to optimize parameters in alternative schemes. For example, we have shown in \S\ref{sec:Monomial} that the convergence rate of Gauss-Legendre for Type 1 integrals may depend on the order of nearby stationary points. While this can be quantified precisely for the case of monomial phase, it is not at all clear how to generalise this analysis when a cluster of multiple stationary points is present. Hence, 
stationary point order 
is a quantity that we deliberately do not explicitly compute, estimate or rely on in any way. Implicitly, of course, it plays a big role, and it does so mainly via the definition of the ball of the radius in \eqref{eq:rxiDef}. 

The main practical benefit of the theoretical analysis of quadrature error in this section is the guarantee that $N$ is a robust parameter for improving accuracy. Concerning possible future improvements, rather than attempting to optimize the quadrature point distribution a priori, we believe a more promising development would be the ability to invoke standard adaptive quadrature schemes along the contours for a given function $f$. 
However, it should be borne in mind that quadrature forms just one step in our algorithm, and that the other steps (particularly the SD path tracing) incur a non-negligible cost overhead, that should also be considered when trying to further optimize performance. 

\section{Further implementation aspects}
\label{sec:Details}

In this section we discuss some additional aspects of the implementation of our algorithm in PathFinder. %

\subsection{Default parameter values}
\label{sec:Defaults}

In Table \ref{tab:Parameters} we list the user-specified parameters in our algorithm, along with the default values used in all our numerical results in \S\ref{sec:Numerics}. These were determined as the result of extensive numerical experiments on a range of examples, not detailed here. Instructions on how to adjust these parameters away from their default values can be found at \textsf{github.com/AndrewGibbs/PathFinder}.

\begin{table}[t!]
\centering
\begin{tabular}{|c|c|c|c|}
\hline
Parameter & Domain & Meaning & Default \\
\hline
$C_{\rm ball}$ & $(0,\infty)$ & 
Governs maximum number of 
& $2\pi$
\\ & & 
oscillations across each non-oscillatory 
& \\ & & 
ball (and hence the ball radius) 
& \\
\hline
$N_{\rm ball}$ & $\N$ & 
Number of rays used 
& 16
\\ & & 
when determining the ball radius 
& \\
\hline
$\delta_{\rm ball}$ & $(0,1)$ & 
Governs when overlapping balls 
& $10^{-3}/(2\max(J-2,1))$ %
\\ & & 
should be amalgamated 
& \\
\hline
$\delta_{\rm ODE}$ & $(0,1)$ & 
Governs the local step size in 
& $0.1$
\\ & & 
the ODE solver for SD path tracing 
& \\
\hline
$\delta_{\rm coarse}$ & $(0,1)$ & 
Tolerance for the increment in the 
& $10^{-2}$
\\ & & 
Newton iteration in the SD path tracing 
& \\
\hline
$\delta_{\rm fine}$ & $(0,1)$ & 
Tolerance for the increment in the 
& $10^{-13}$
\\ ($<\delta_{\rm coarse}$) & & 
Newton iteration in the quadrature 
& \\
\hline
$\delta_{\rm quad}$ & $(0,1)$ & 
Governs when the contribution 
& $10^{-16}$
\\ & & 
from an integral on the quasi-SD 
& \\ & & 
deformation is computed &\\
\hline
$N$ & $\N$ & 
Number of quadrature points to use 
& no default
\\ & & 
in each integral evaluated in step 76
& \\
\hline
\end{tabular}
\caption{User-specified parameters and their default values in PathFinder.}
\label{tab:Parameters}
\end{table}

\subsection{Small $\omega$} 
\label{sec:Smallomega}

While our algorithm is geared towards the case where $\omega$ is moderate or large, we make a brief comment on the case where $\omega$ is small. 
If $\Gamma$ is infinite then the integral \eqref{eq:I} typically diverges  for $\omega= 0$. However, if $\Gamma$ is finite then the integral converges for $\omega=0$ and for small enough $\omega$ it is non-oscillatory. In PathFinder we detect and deal with this case in the following way. If both endpoints are finite, then before starting step 1 of the algorithm we construct non-oscillatory balls around the endpoints (using the process in \S\ref{sec:DefNonOsc}) and check whether the balls intersect non-trivially. If so, we apply standard Gauss-Legendre quadrature to evaluate \eqref{eq:I}; if not, the balls are discarded and we proceed with the rest of the algorithm.

\subsection{The case $J=1$}
\label{sec:Jequalsone}

In the case $J=1$ (linear phase) there are no stationary points, and our algorithm simplifies dramatically. Furthermore, the SD contours are simply parallel straight lines in the direction of the single valley at angle $\pi/2-\arg(\alpha_1)$, and there is no need to trace them numerically. Hence when $J=1$ PathFinder skips the ODE contour tracing step and exploits the exact characterization of the SD contours mentioned above.

\subsection{Specifying infinite endpoints} 
\label{sec:InfiniteEndpoints}

In the description of our algorithm in \S\ref{sec:Algorithm} we made the assumption that any infinite endpoint of the contour $\Gamma$ should be at a valley $v\in \cV$. 
PathFinder is actually more flexible than this. The user is permitted to specify an infinite endpoint at any $\theta\in [v-\pi/(2J),v+\pi/(2J)]$ and the code will automatically adjust this to equal $v$. 
The case $\theta = v\pm\pi/(2J)$ is delicate because the highest order term in the phase does not provide exponential decay along the contour. Nonetheless, we include it, because in applications one often encounters this case, with the integral converging conditionally (under appropriate assumptions on $f$) and the contour deformation to $v$ being justified by Jordan's Lemma.

\section{Numerical results}		
\label{sec:Numerics}

In this section we present numerical results illustrating the performance of our algorithm and its implementation in PathFinder.  
All results in this section were produced using PathFinder Version 1.0 \cite{PathFinder}.

\subsection{A ``generic'' example}
\label{sec:DigitsOfPiExample}

\begin{figure}[t!]
\def\h{60mm}
	\centering
\subfloat[$\omega = 0.01$]{
	\includegraphics[height=\h]{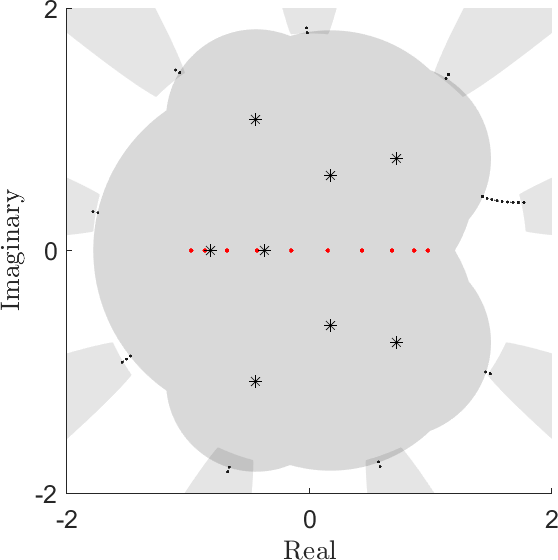}}
\subfloat[$\omega = 1$]{
	\includegraphics[height=\h]{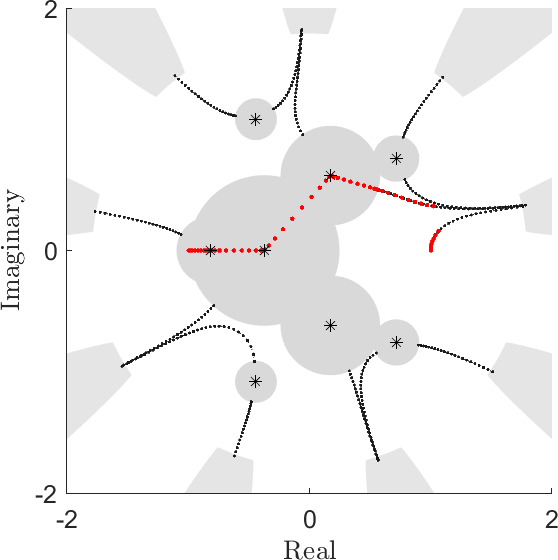}}\\
	\subfloat[$\omega = 5$]{
	\includegraphics[height=\h]{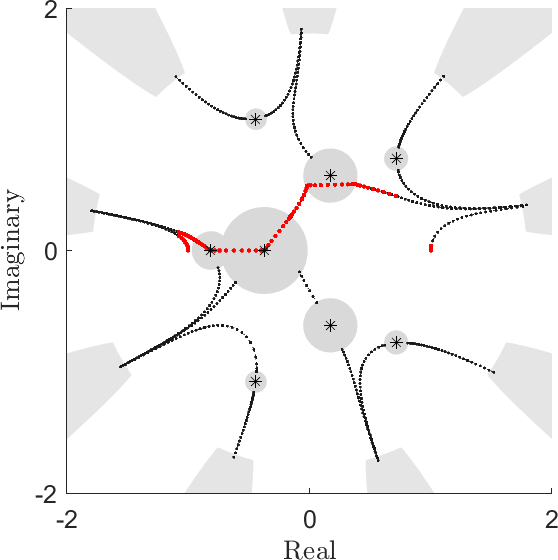}}
\subfloat[$\omega = 50$]{
	\includegraphics[height=\h]{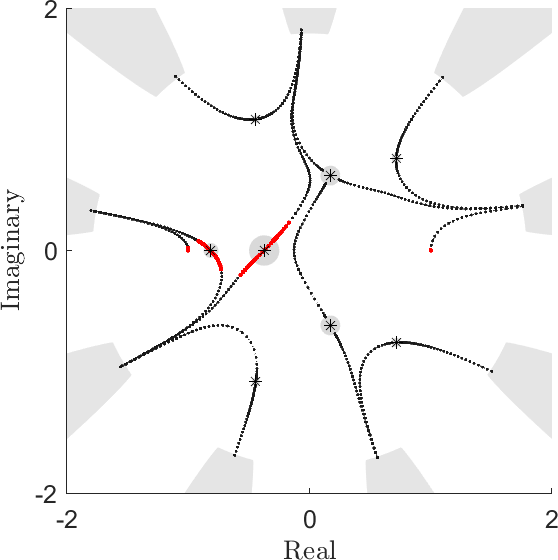}}
	\caption{PathFinder output (cf.~Figure \ref{fig:intro_eg}) with $N=10$ for the approximation of \eqref{eq:DigitsOfPiExample}.}
	\label{fig:DigitsOfPiDeformation}
\end{figure}

We begin by illustrating the performance of PathFinder on the integral
\begin{align}
\label{eq:DigitsOfPiExample}
I = \int_{-1}^1 (2z^4+7z^3+z^2+8z+2) \ee^{\ii\omega(3z^9 + z^8 + 4 z^7 + z^6 + 5 z^5 + 9 z^4 + 2 z^3 + 
      6 z^2 + 5 z + 3)}\,\dd z,
\end{align}
where, to convey the message that our approach is applicable to truly ``generic'' amplitudes and polynomial phase functions, the coefficients of $f$ and $g$ are chosen to be the first 5 digits of $\ee$ and the first 10 digits of $\pi$, respectively. 
This can be approximated by PathFinder via the Matlab code (cf.\ \eqref{eq:PathFinderCode})
\SaveVerb{VerbA}|PathFinder(-1,1,@(z) 2*z.^4+7*z.^3+z.^2+8*z+2,...|%
\SaveVerb{VerbB}|[3 1 4 1 5 9 2 6 5 3],omega,N)|%
\begin{align*}
  &\UseVerb{VerbA} \\
  &\qquad\UseVerb{VerbB}
\end{align*}
In Figure \ref{fig:DigitsOfPiDeformation} we plot the quasi-SD deformations and quadrature point distributions (using the PathFinder \texttt{\textquotesingle plot\textquotesingle} option) for \eqref{eq:DigitsOfPiExample} for $\omega\in\{0.01,1,5,50\}$ and $N=10$. As explained in \S\ref{sec:DefNonOsc}, for smaller $\omega$ the non-oscillatory balls are larger, and can overlap, while for larger $\omega$ they shrink around the stationary points. 
In more detail, in Figure \ref{fig:DigitsOfPiDeformation}(a) ($\omega=0.01$), $\omega$ is small enough that both endpoints are inside the same non-oscillatory ball. Hence the integral is treated as non-oscillatory and is approximated by Gauss-Legendre quadrature along a single straight-line contour. 
In Figure \ref{fig:DigitsOfPiDeformation}(b) ($\omega=1$), $\omega$ is still small enough that many of the balls overlap, and the quasi-SD deformation comprises two SD contours (one from an exit and one from an endpoint) plus four straight-line contours in the non-oscillatory region. 
In Figure \ref{fig:DigitsOfPiDeformation}(c) ($\omega=5$), $\omega$ is large enough that only two balls overlap, and the quasi-SD deformation comprises five SD contours (two from endpoints, two from exits to valleys, and one from an exit to an entrance), plus four straight-line contours in the non-oscillatory region. 
Finally, in Figure \ref{fig:DigitsOfPiDeformation}(d) ($\omega=50$), $\omega$ is so large that none of the balls overlap, and the quasi-SD deformation comprises eight contributing SD contours (two from endpoints and six from exits to valleys), plus three straight-line contours in the non-oscillatory region. However, in this case the two SD contours and one straight-line contour associated with the stationary point near $0.2+0.5\ii$ are judged to make a negligible contribution to the integral, so are not assigned any quadrature points. 
We emphasize that this intricate behaviour is fully automated, with no expert input required from the user. 

\begin{figure}[t!]
	\centering
\subfloat[Accuracy]{
	\includegraphics[width=.5\linewidth]{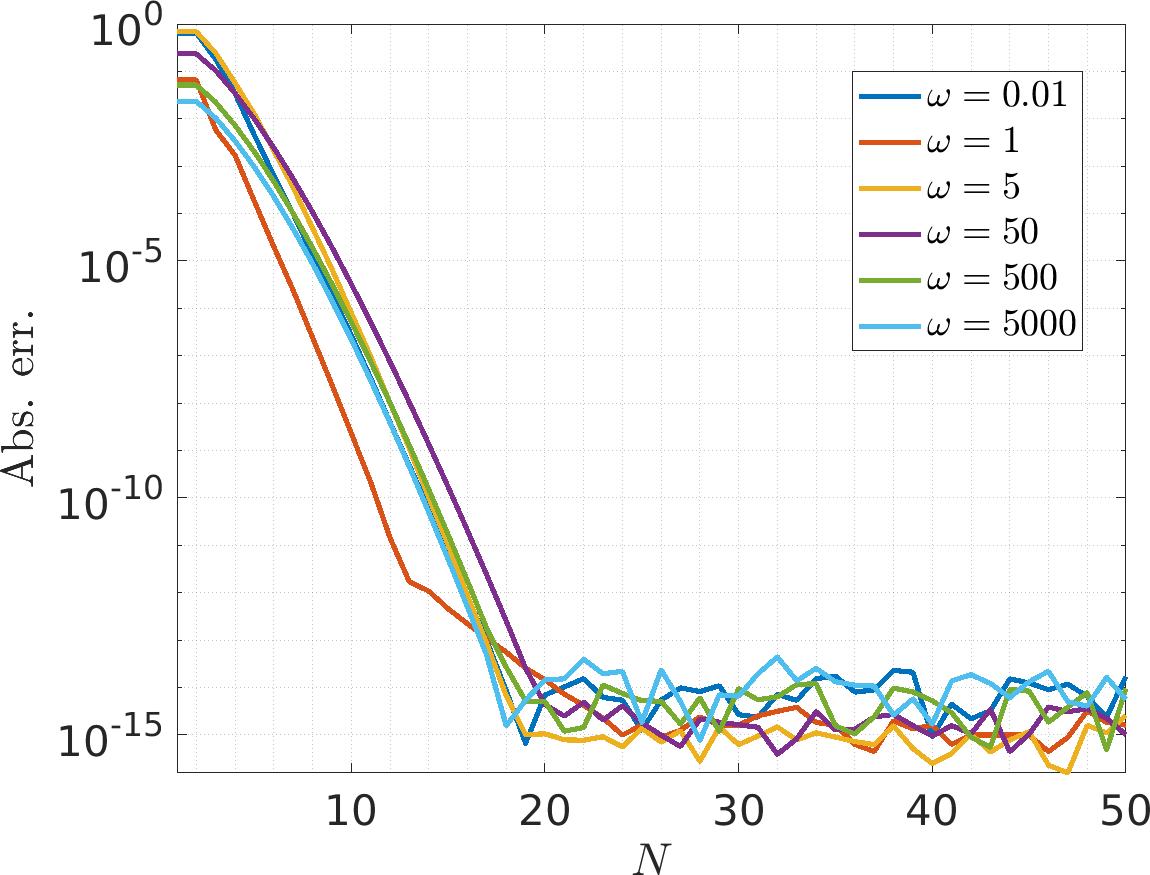}}
\subfloat[Timings]{
	\includegraphics[width=.5\linewidth]{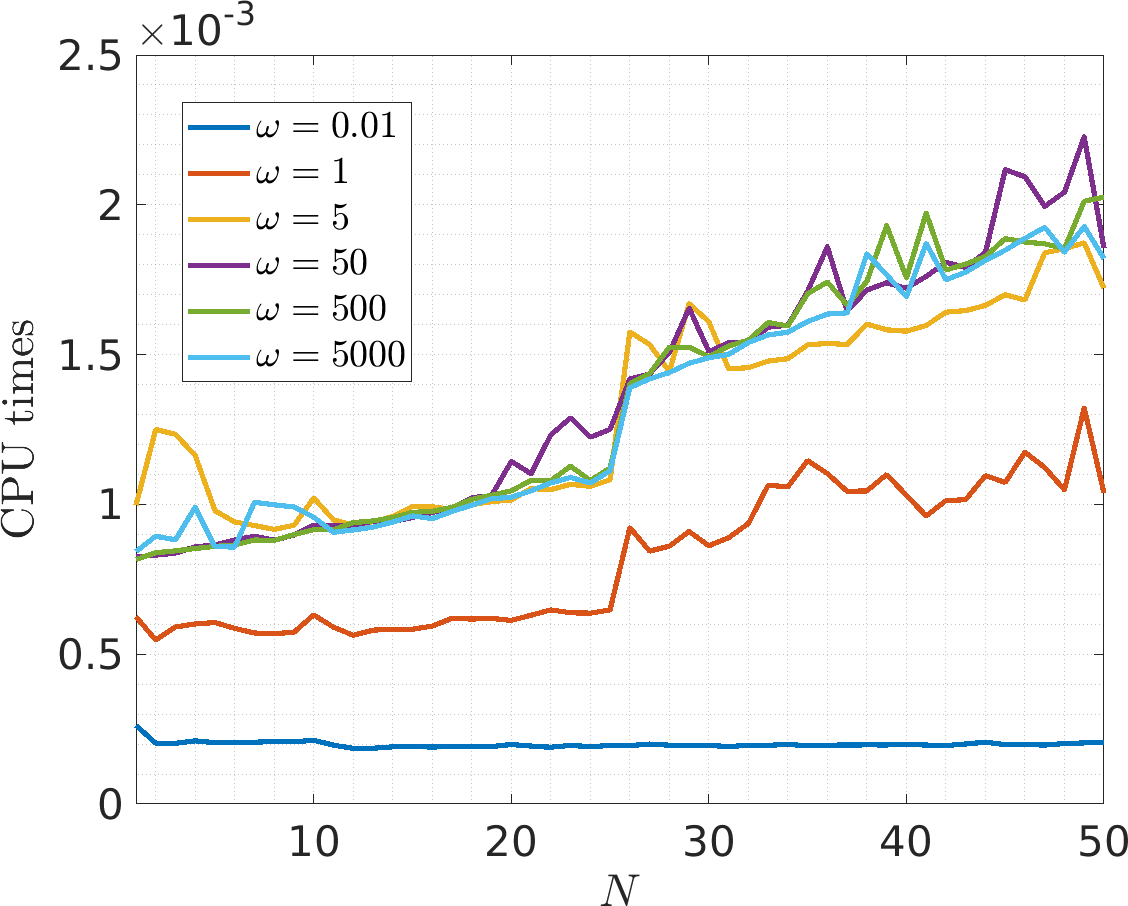}}\\
\caption{Accuracy (a) and timings (b) of the PathFinder approximation of \eqref{eq:DigitsOfPiExample}.}
	\label{fig:DigitsOfPi}
\end{figure}

In Figure \ref{fig:DigitsOfPi}(a) we plot the error in the PathFinder approximation of  \eqref{eq:DigitsOfPiExample}, compared to reference values computed using the Julia \texttt{QuadGK} package 
when $\omega<500$, and using PathFinder with $N=500$ when $\omega\geq 500$. 
For fixed $\omega$ we observe rapid convergence as $N\to\infty$, at a rate that appears independent of $\omega$. In Figure \ref{fig:DigitsOfPi}(b) we show the associated computation times, which remain bounded as $\omega$ increases. 
\subsection{Coalescence and the Airy function}
\label{sec:Airy}
The canonical example of an integral with two coalescing stationary points is provided by the integral representation for the Airy function, viz.\ (see \cite[9.5.4]{DLMF}) 
\begin{align}
\label{eq:AiryRep}
\Ai(x) = \int_{\infty \ee^{-\ii\pi/3}}^{\infty \ee^{\ii\pi/3}} \ee^{z^3/3-xz}\,\dd z= \int_{\infty \ee^{-\ii\pi/3}}^{\infty \ee^{\ii\pi/3}} \ee^{\ii(-\ii(z^3/3-xz))}\,\dd z, \qquad x\in \C, 
\end{align}
which 
is of the form \eqref{eq:I} with $f\equiv 1$, $\omega=1$ and $g(z;x)=-\ii(z^3/3- xz)$.  
Up to a change of variable this is the same example for which, as mentioned in \S\ref{sec:intro}, a bespoke, complex Gaussian quadrature rule was developed in \cite{HuJuLe:19}. 
$\Ai$ can be approximated by PathFinder %
via the Matlab code %
\begin{align*}
&\texttt{Ai = @(x) PathFinder(-pi/3,pi/3,[],...}\\
&\,\,\qquad \texttt{-1i*[1/3 0 -x 0],1,N,\textquotesingle infcontour\textquotesingle,[true true])}
\end{align*}
where the input \texttt{[]} for \texttt{f} indicates that $f\equiv 1$. 

\begin{figure}[t!]
\def\h{40mm}
	\centering
\subfloat[$x=-5$]{
	\includegraphics[height=\h]{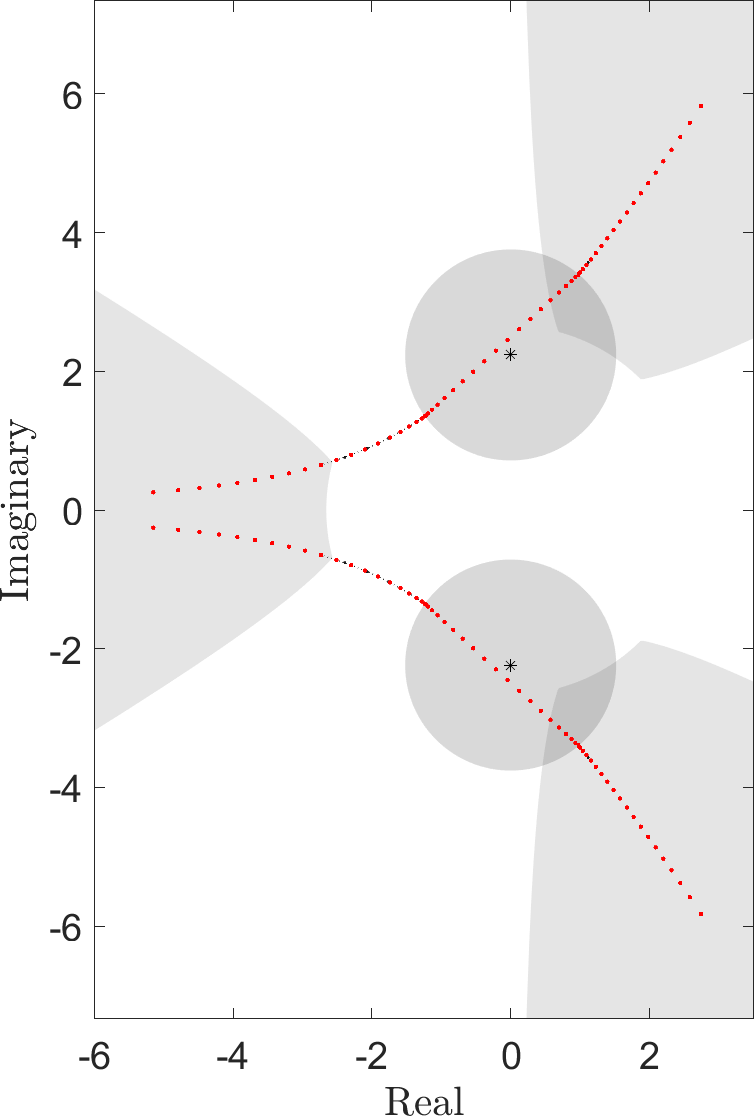}}
\subfloat[$x=-1$]{
	\includegraphics[height=\h]{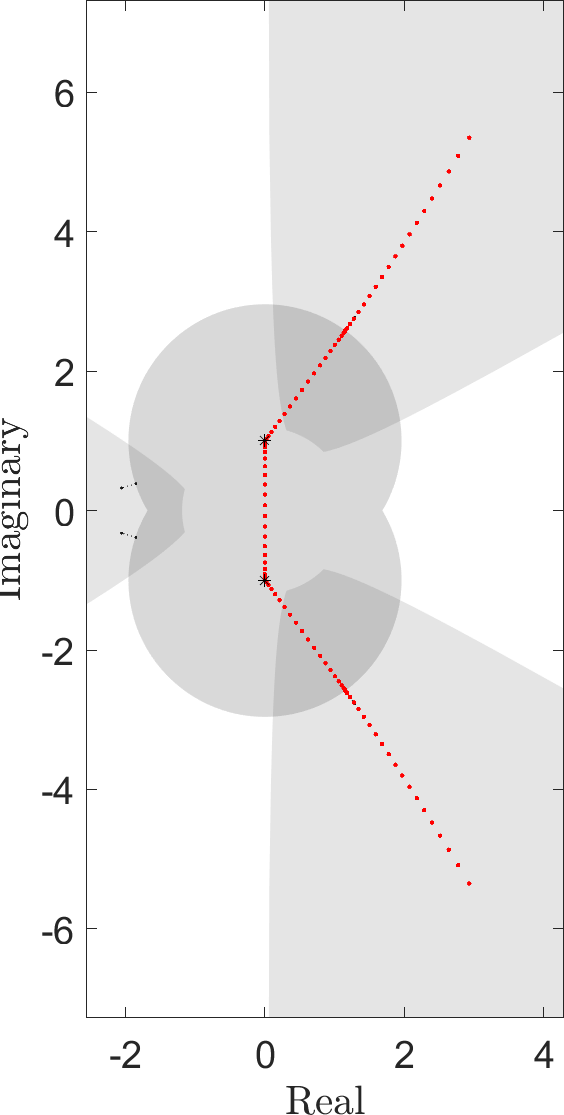}}
\subfloat[$x=-0.5$]{
	\includegraphics[height=\h]{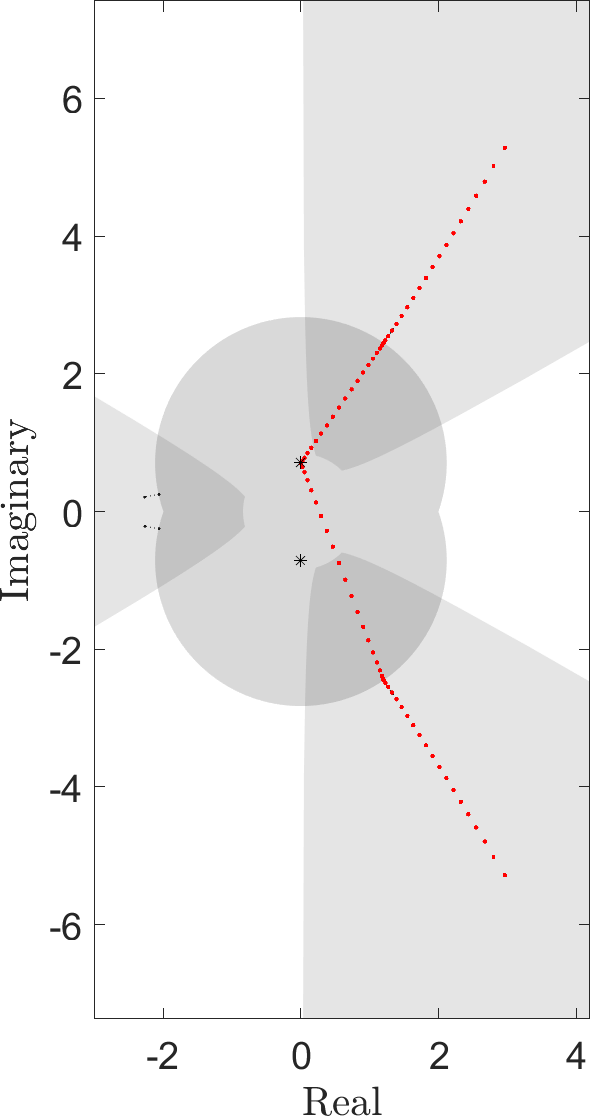}}
	\subfloat[$x=0$]{
	\includegraphics[height=\h]{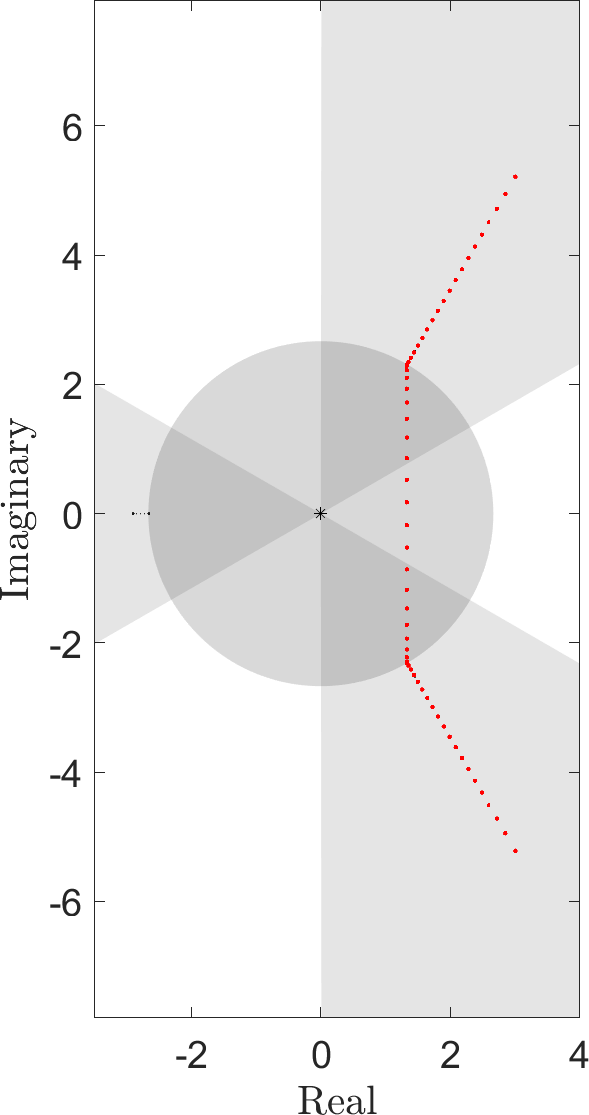}}
	\subfloat[$x=5$]{
	\includegraphics[height=\h]{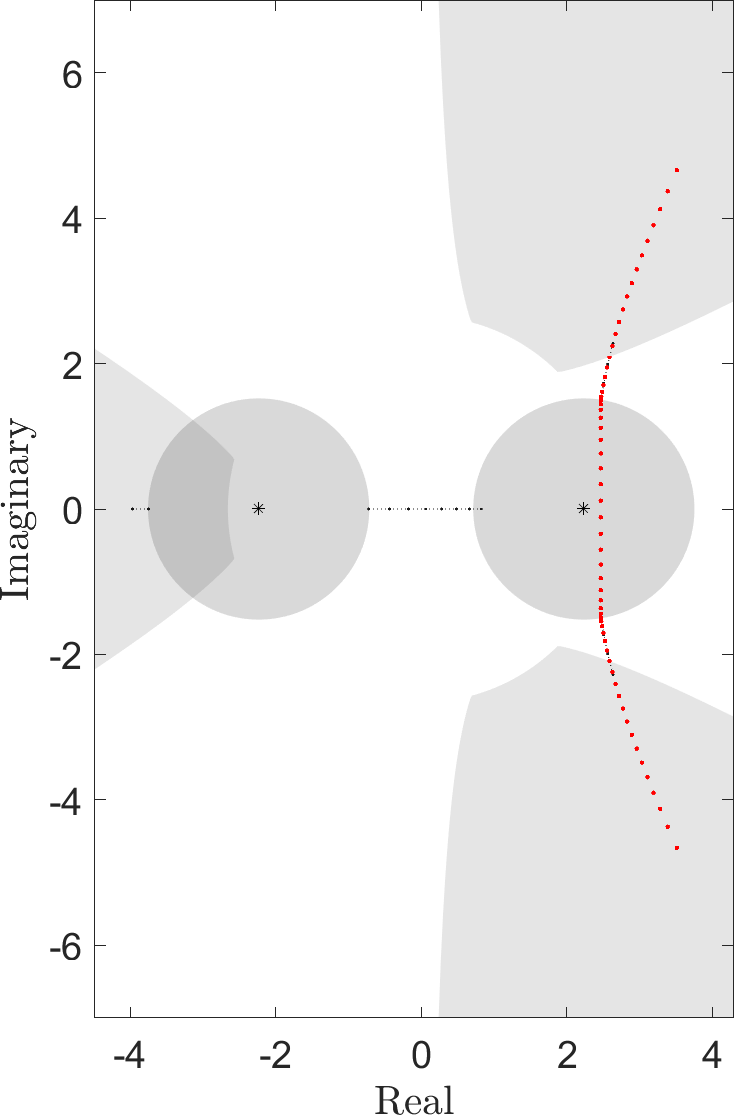}}
	\caption{PathFinder output 
	with $N=20$ for the approximation of $\Ai(x)$ via \eqref{eq:AiryRep} at various $x$, showing the stationary point coalescence at $x=0$.}
	\label{fig:airydeformation}
\end{figure}

In Figure \ref{fig:airydeformation} we plot the quasi-SD deformations, along with the distribution of quadrature points for $N=20$, for the evaluation of $\Ai(x)$ at $x\in\{-5,-1,-0.5,0,5\}$. 
Here one observes in detail how our algorithm deals with stationary point coalescence, as the non-oscillatory balls overlap, merge, then split. 
In Figure \ref{fig:airydeformation}(a) the quasi-SD deformation comprises four SD contours from exits, plus two straight-line contours inside balls (which do not go via stationary points). 
In Figure \ref{fig:airydeformation}(b) the balls overlap and this changes to two SD contours from exits plus three straight-line contours inside balls (which go via both stationary points). 
In Figure \ref{fig:airydeformation}(c) the balls overlap enough that both stationary points are contained in both balls, so we get two SD contours from exits plus just two straight-line contours inside balls (which go via only one of the stationary points). 
In Figure \ref{fig:airydeformation}(d) the balls have merged completely and in addition to the two SD contours from exits there is just one straight-line contour inside a ball (which does not go via the stationary point). 
In Figure \ref{fig:airydeformation}(e) the balls have split again, but we see the same deformation structure as in Figure \ref{fig:airydeformation}(d).
Again, we emphasize that these calculations are fully automated. 

\begin{figure}[t!]
	\centering
	\includegraphics[width=0.7\linewidth]{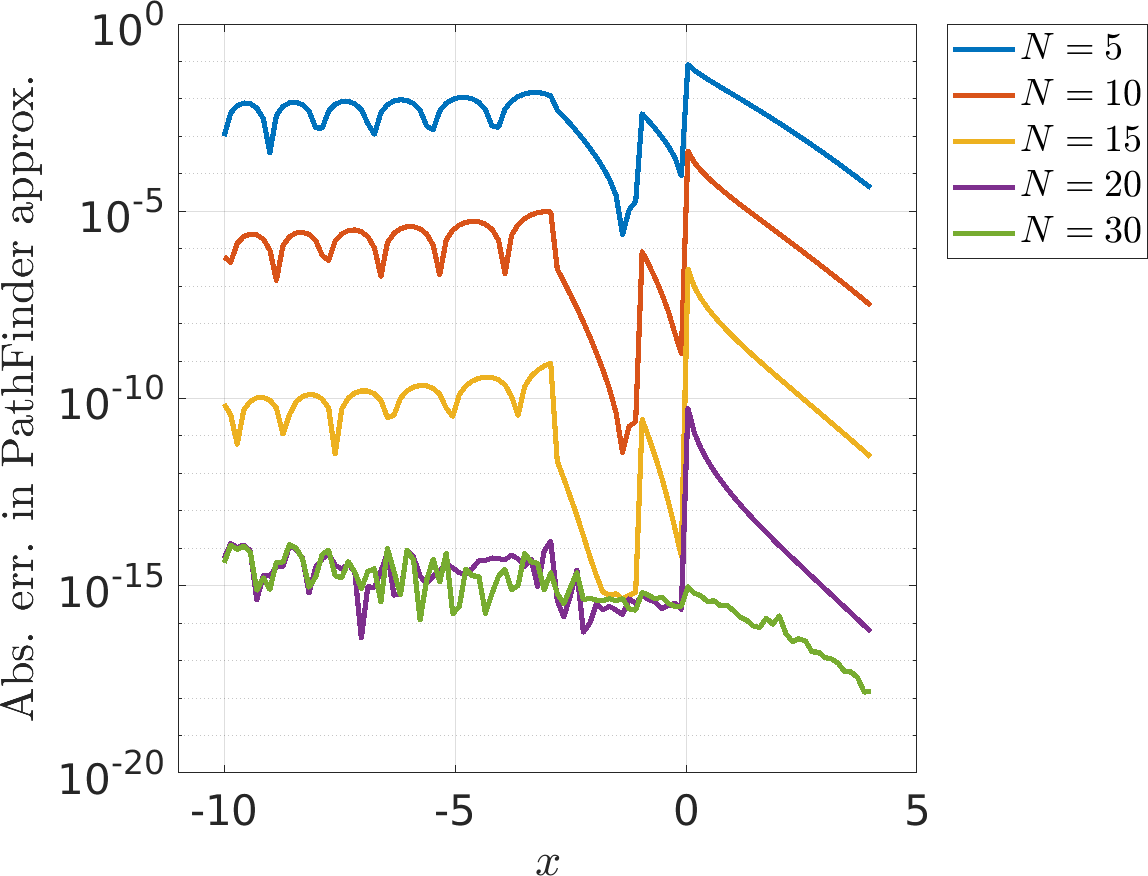}
	\caption{Accuracy of PathFinder approximation of $\Ai(x)$ for different $N$.}
	\label{fig:airyerror}
\end{figure}

In Figure \ref{fig:airyerror} we show the accuracy of the PathFinder approximation for this example as a function of $x\in[-10,4]$, for different $N$. Our reference is the built-in Matlab command \texttt{airy}. %
We note that between $x=-3$ and $x=0$ the error for the smaller values of $N$ undergoes some jumps. These are due to the fact that near stationary point coalescence  
the topology of the quasi-SD deformation, the number of contours constituting it, and hence the total number of quadrature points along it (recall \eqref{eq:Ntot}), all change discontinuously as a function of $x$ (as illustrated in Figure \ref{fig:airydeformation}).  However, as $N$ increases we see a clear, approximately exponential decrease in the error, and, although the rate of decrease depends slightly on $x$ (because of the factors mentioned above), for $N=30$ we achieve approximately $10^{-13}$ error uniformly across the interval.

\subsection{A high order stationary point - comparison with Mathematica's implementation of Levin quadrature}

\begin{figure}[t!]
	\centering
	\includegraphics[width=.8\linewidth]{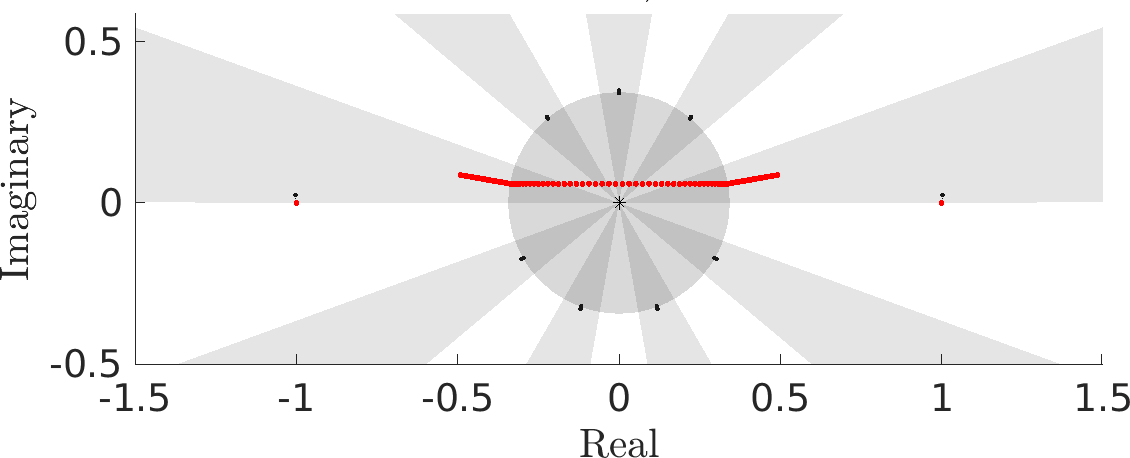}
	\caption{PathFinder output for \eqref{eq:montest} with $\omega=100,000$ and $N=50$.}	
	\label{fig:mathematicamonomialdeformation}
\end{figure}

We now consider the integral
\begin{equation}\label{eq:montest}
I = \int_{-1}^1\sin(z)\ee^{\ii\omega z^9}\dd z,
\end{equation}
which has a stationary point of order 8 at the origin. The integral \eqref{eq:montest} can be approximated by PathFinder via the command
\[
\texttt{PathFinder(-1,1,@(z) sin(z),[1 0 0 0 0 0 0 0 0 0],omega,N)}
\]
Figure \ref{fig:mathematicamonomialdeformation} shows the quasi-SD deformation and quadrature point distribution obtained by PathFinder for $\omega=100,000$ and $N=50$. There are small contributions from the endpoints, but the main contribution comes from the ball containing the stationary point. 

In the Mathematica documentation \cite[pp75-86]{MathematicaHandbook}, it is stated that oscillatory integrals with monomial phase functions such as \eqref{eq:montest} can be 
evaluated efficiently using the built-in Mathematica function \texttt{NIntegrate}, via
its implementation of Levin quadrature (which is described, e.g.,\ in \cite[\S3.3]{DeHuIs:18}). To do this one can use the Mathematica command:
\SaveVerb{VerbA}|NIntegrate[Sin[x]Exp[omega*I*x^9],{x,-1,1},|%
\SaveVerb{VerbB}|Method->{"LevinRule","Kernel"->Exp[omega*I*x^9]}]|%
\begin{align*}
  &\UseVerb{VerbA} \\
  &\qquad\UseVerb{VerbB}
\end{align*}

\begin{figure}[t!]
	\centering
	\subfloat[Accuracy (compared to Mathematica)]{\includegraphics[height=52mm]{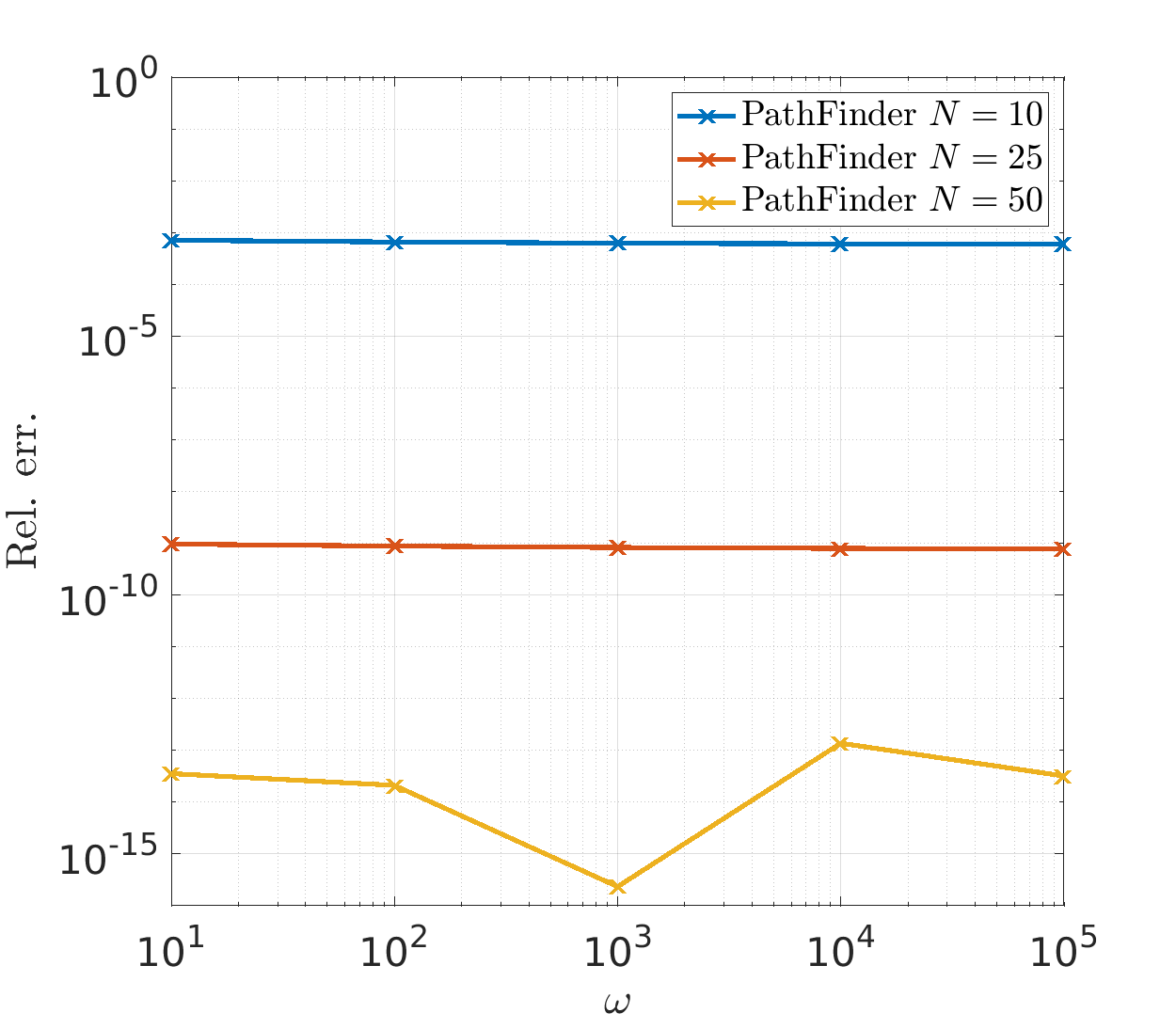}}
		\subfloat[Timings]{\includegraphics[height=52mm]{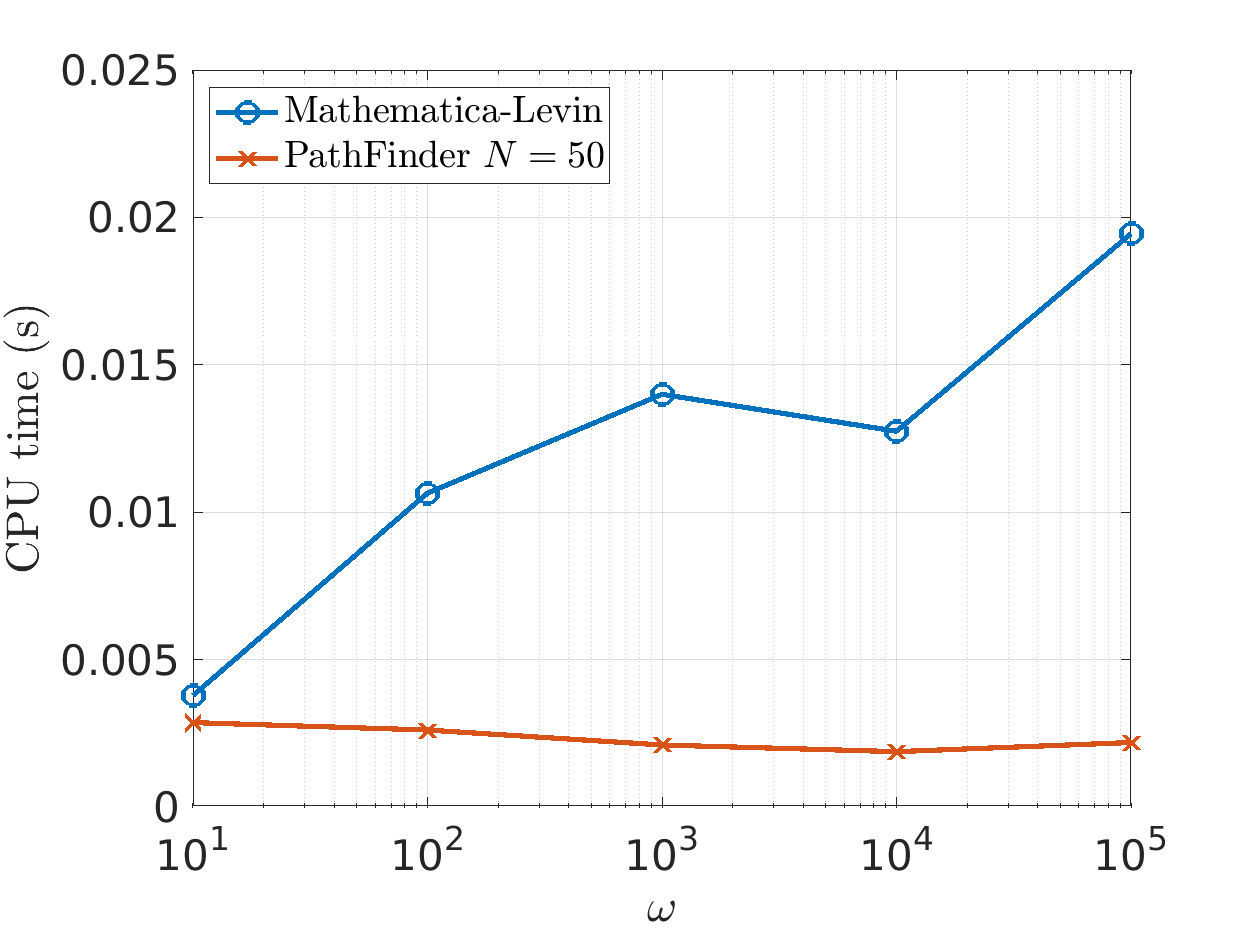}}
	\caption{Accuracy (a) and timings (b) of the PathFinder approximation of \eqref{eq:montest}, compared to Mathematica's \texttt{NIntegrate} command.}
	\label{fig:mathematicamonomial}
\end{figure}
In Figure \ref{fig:mathematicamonomial}(a) we show a plot of the relative accuracy of our PathFinder approximation, compared to the Mathematica approximation (using the default settings), as a function of $\omega$, for different $N$ values. For all three $N$ values the accuracy of our approximation is approximately uniform in $\omega$, and for $N=50$ our approximation agrees with Mathematica's to approx $13$ digits. 
In Figure \ref{fig:mathematicamonomial}(b) we report the corresponding computation times (averaged over 100 identical runs) for the Mathematica routine and for the PathFinder approximation with $N=50$. 
These results were obtained on a laptop (i7-1185G7, 32GB RAM) running Mathematica v13.0 and Matlab v2021b. The results suggest that PathFinder is highly competitive with Mathematica for this problem, especially for large $\omega$.

\subsection{Coalescence to a high order stationary point}
\label{sec:Coalescence}

\begin{figure}[t!]
	\centering
\subfloat[$r=0.001$]{
	\includegraphics[height=30mm]{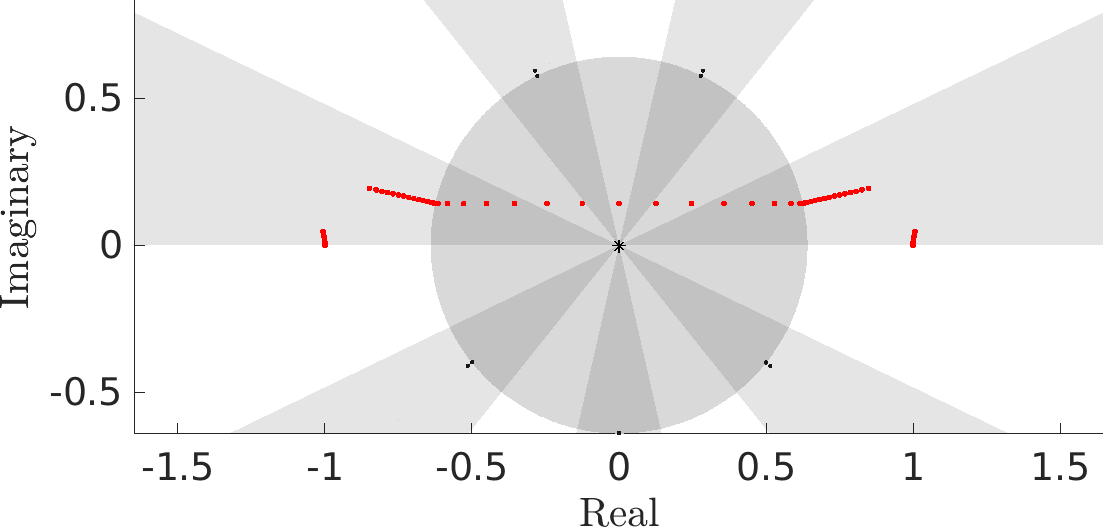}}
\subfloat[$r=0.35$]{
	\includegraphics[height=34mm]{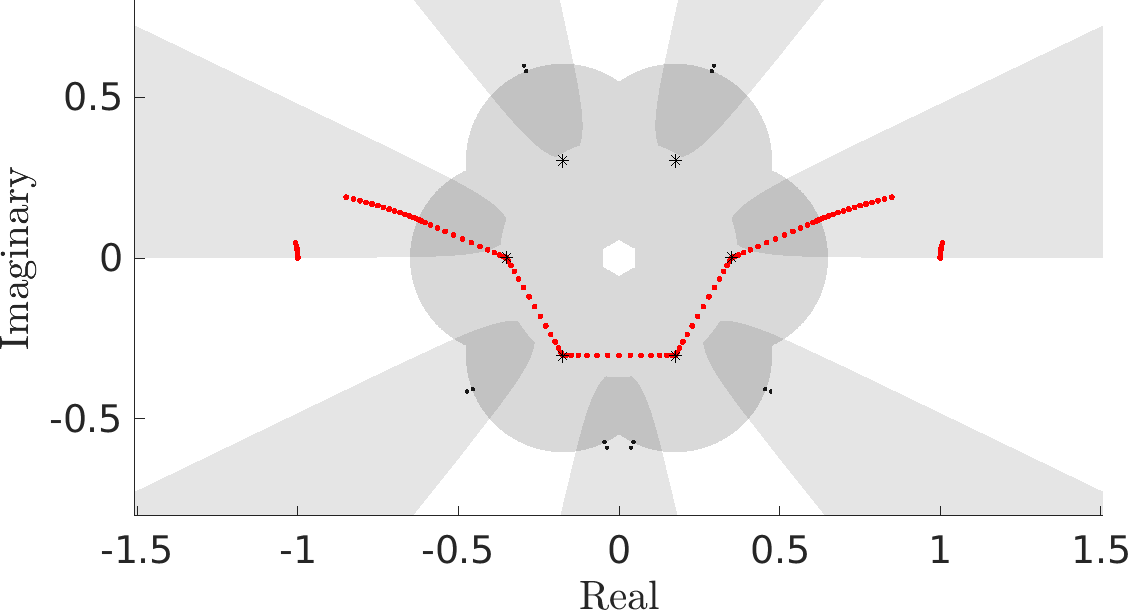}}\\
	\subfloat[$r=0.5$]{
	\includegraphics[height=45mm]{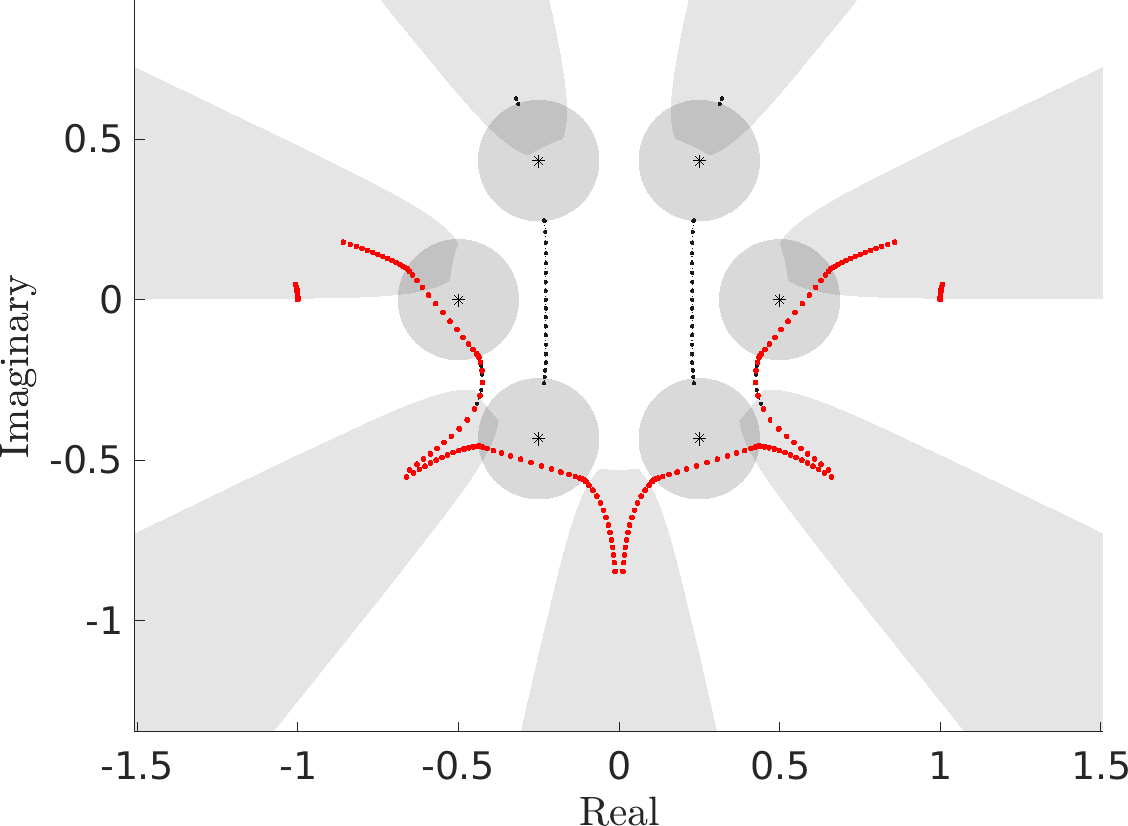}}
\subfloat[$r=1$]{
	\includegraphics[height=45mm]{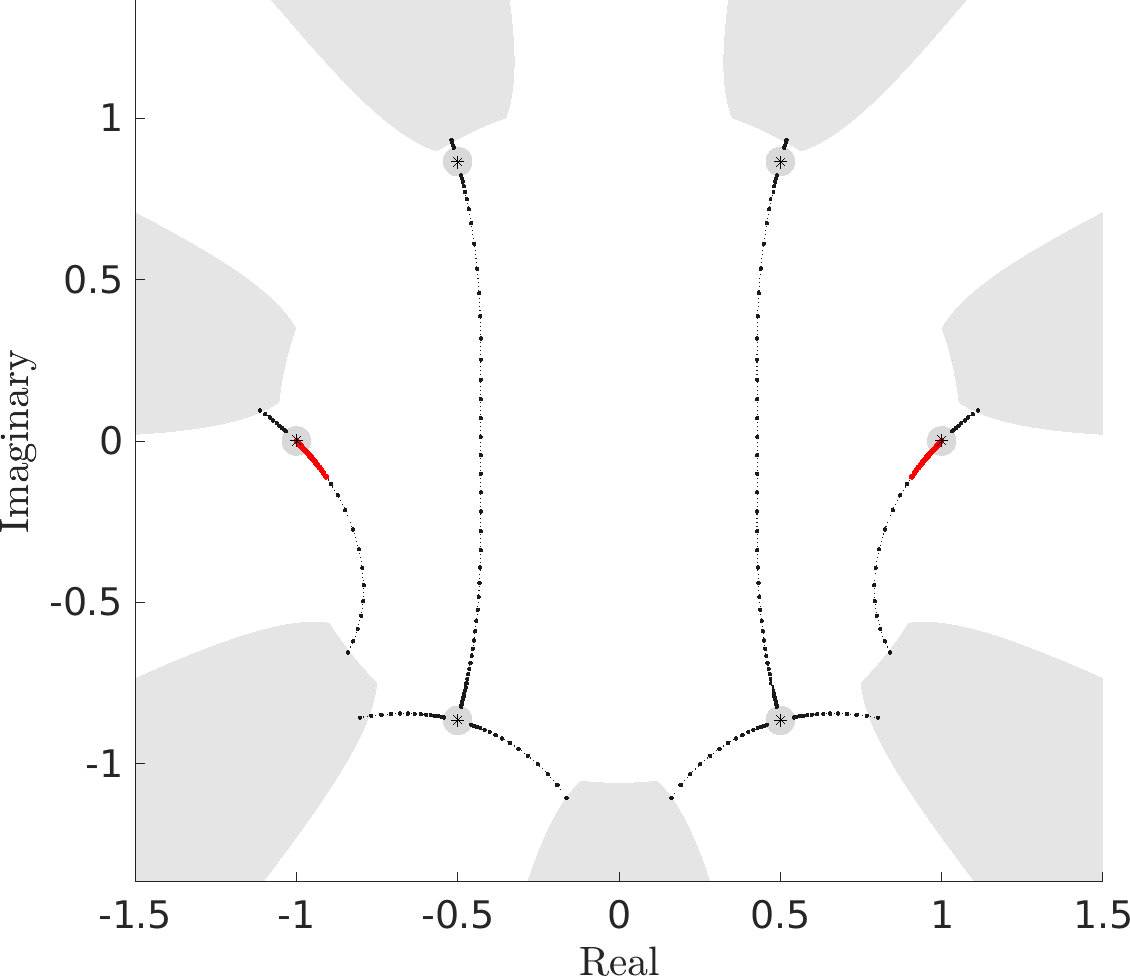}}
	\caption{PathFinder output 
for the approximation of \eqref{eq:Coalescence} with $\omega=1000$ and $N=15$.}
	\label{fig:Coalescence}
\end{figure}

We now investigate the robustness of our algorithm in the presence of a large number of coalescing stationary points. Specifically, we consider the integral
\begin{align}
\label{eq:Coalescence}
\int_{-1}^1 \ee^{\ii \omega(z^7/7-r^6 z)}\,\dd z,
\end{align}
where $r\geq 0$ is a parameter controlling the coalescence. For $r>0$ there are 6 stationary points with $|\xi|=r$, namely the solutions of $\xi^6 = r^6$, and for $r=0$ there is a single stationary point of order $6$. To evaluate this integral in PathFinder for a given $r$, one can use the command
\SaveVerb{VerbA}|PathFinder(-1,1,[],[1/7 0 0 0 0 0 -r^6 0],omega,N)|%
\begin{align*}
\UseVerb{VerbA}
\end{align*}
In Figure \ref{fig:Coalescence} we plot the quasi-SD deformations and quadrature point distributions for a some different $r$ values, showing how the balls first intersect and then merge as $r\to0$. 

\begin{figure}[t!]
	\centering
	\subfloat[Convergence]{
	\includegraphics[width=0.5\linewidth]{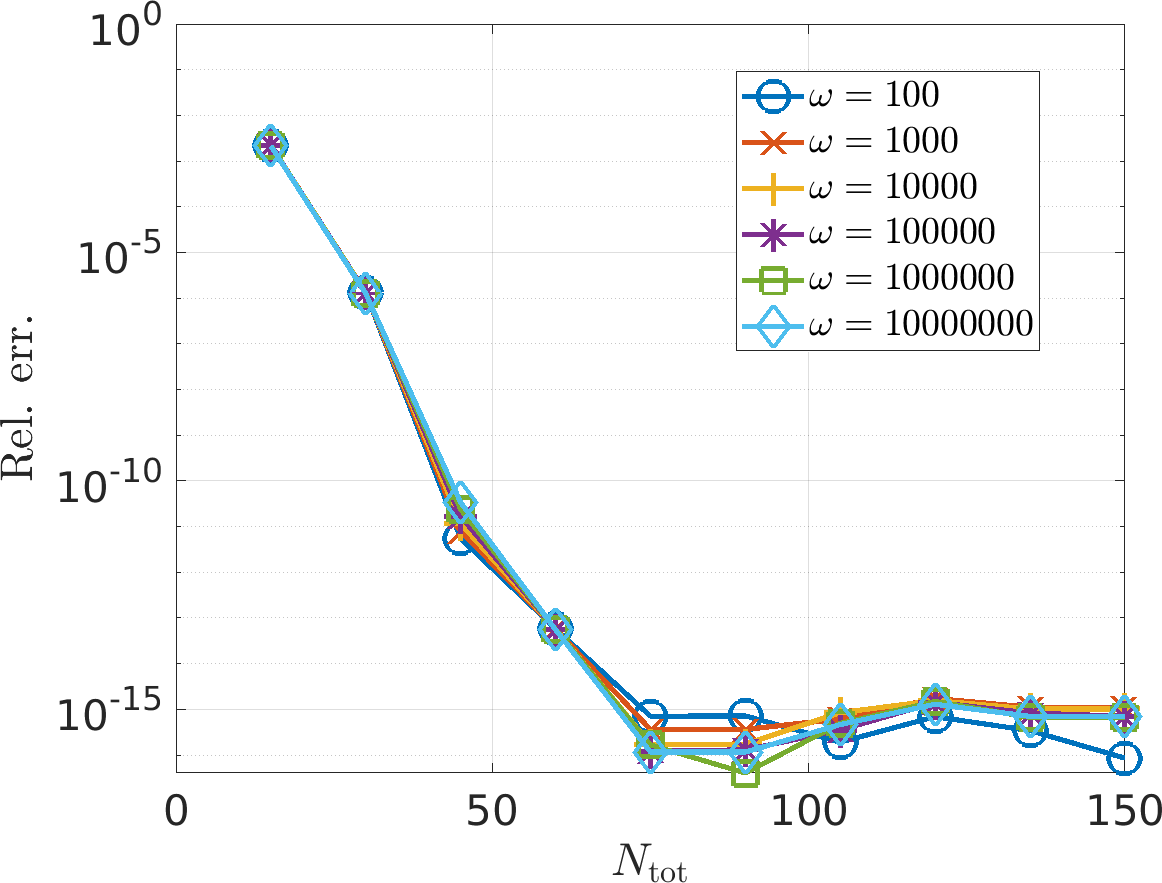}}
		\subfloat[Timings]{
	\includegraphics[width=0.5\linewidth]{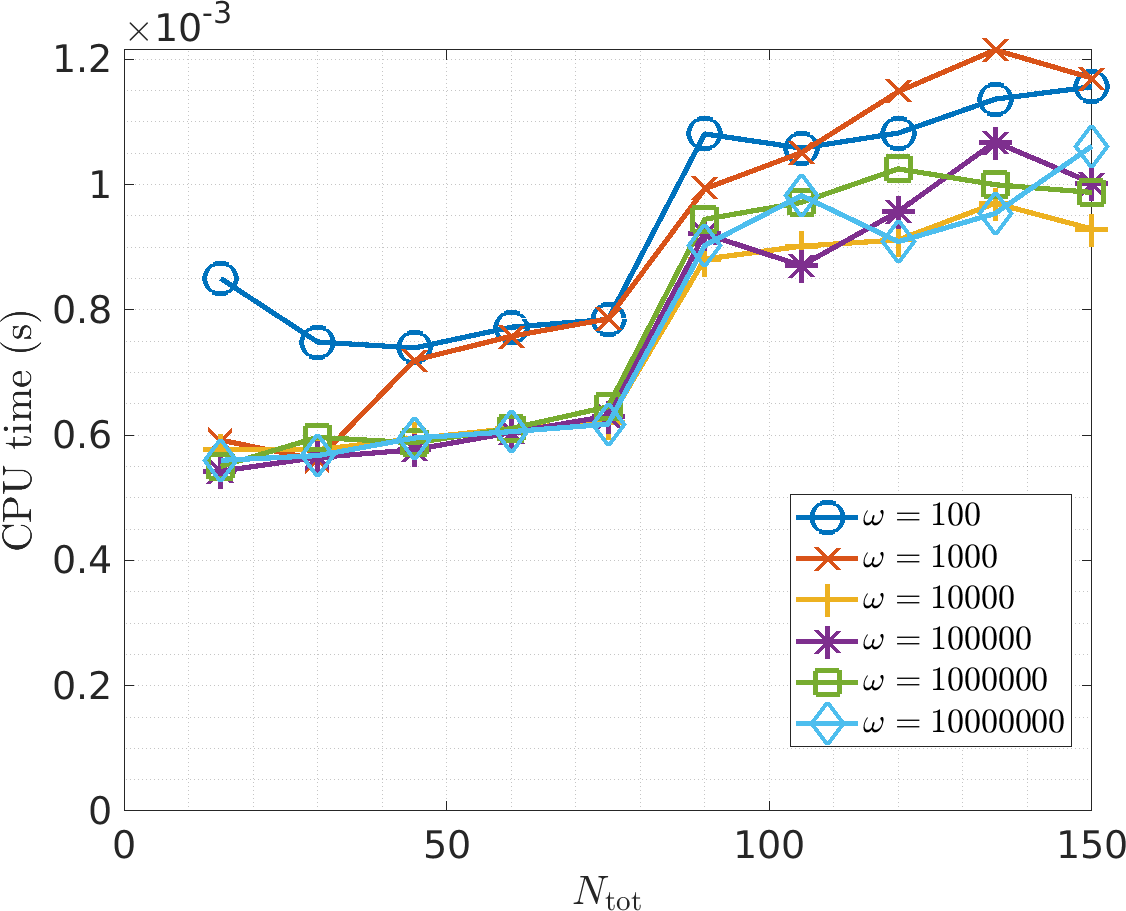}}
	\caption{Accuracy (a) and timings (b) for \eqref{eq:Coalescence} for $p=6$ and $r=0.01$. }
	\label{fig:coalescencecpur100p6a0}
\end{figure}

\begin{figure}[t!]
	\centering
		\subfloat[$N=10$]{
	\includegraphics[width=0.5\linewidth]{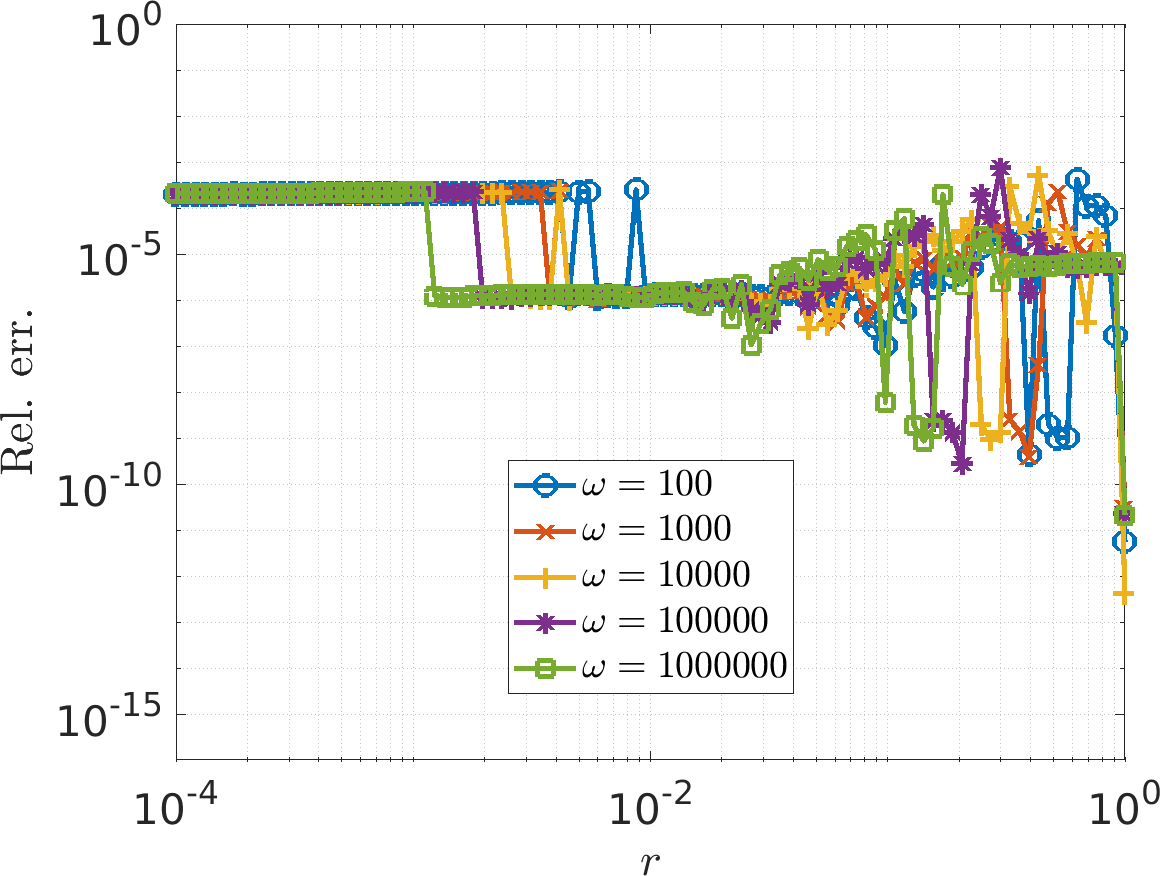}}
	\subfloat[$N=50$]{
	\includegraphics[width=0.5\linewidth]{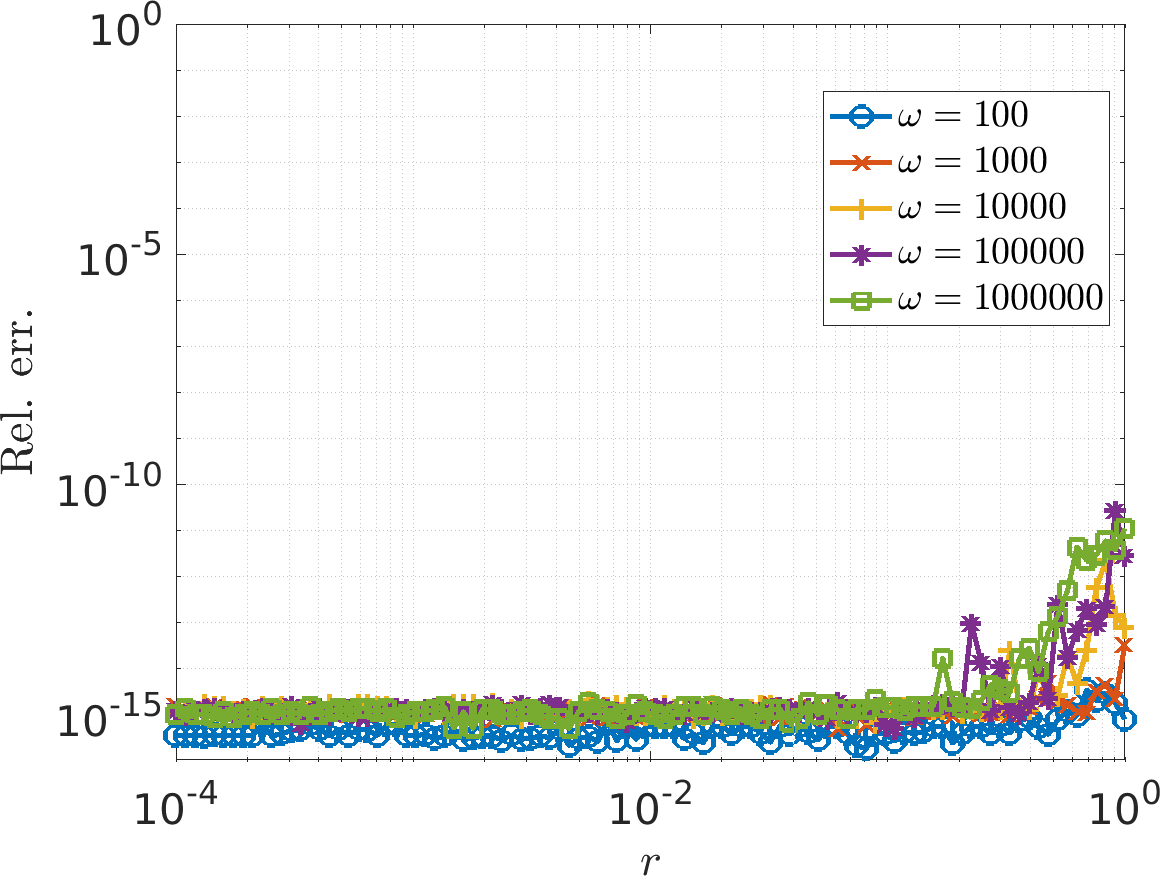}}
	\caption{Accuracy for \eqref{eq:Coalescence} as $r\to0$ for $p=6$, for $N=10$ (a) and $N=50$ (b).}
	\label{fig:coalescencetozero10p6}
\end{figure}

In Figure \ref{fig:coalescencecpur100p6a0} we show convergence (with respect to a PathFinder reference with $N=500$) and CPU times (averaged over 100 runs) for fixed $r=0.01$. We see that both the error and the CPU time are essentially independent of $\omega$ in this case. 
In Figure \ref{fig:coalescencetozero10p6} we plot errors for two fixed $N$ values $N=10,50$, as a function of $r$. We observe that as $r\to0$, the error stays bounded. 
For $N=10$ the error jumps up between $r=10^{-3}$ and $r=10^{-2}$, at a point depending on $\omega$. This represents the point at which the balls around the stationary points merge, resulting in a reduction of $N_{\rm tot}$, and hence a reduction in accuracy. But after this point we observe no further reduction in accuracy as $r\to0$. We remark that for sufficiently small $r>0$ the six stationary points are numerically indistinguishable, but this isn't a problem for our algorithm because in that case the problem will be treated identically to that of a monomial phase. 

\subsection{Canonical cuspoid integrals and their generalisations}
\label{sec:Applications}

In this section we show how our algorithm can be applied to the computation of some of the canonical integrals catalogued by Berry and Howls in \cite[\S36]{DLMF}, which, as mentioned already in \S\ref{sec:intro}, are 
of fundamental importance in numerous application areas including optics, acoustics and quantum mechanics. 

In this context, our algorithm is related to that of \cite{KiCoHo:00}, where an adaptive contour deformation approach was applied to evaluate the cuspoid integrals considered in \S\ref{sec:Cuspoid}. The algorithm in \cite{KiCoHo:00} is similar in spirit to our approach, in that it deforms the integration contour so that it terminates in valleys at infinity, and splits the contour into portions close to stationary points plus portions away from stationary points. However, in contrast to our approach, the algorithm in \cite{KiCoHo:00} does not attempt to trace SD contours, and hence is susceptible to rounding errors associated with the ``violent'' behaviour of the exponential factor $\ee^{\ii \omega g(z)}$ when one is not on a true SD contour - see \cite[\S2]{KiCoHo:00}. 
Furthermore, while the algorithm in \cite{KiCoHo:00} was specialised to the case of integration over the real line, our algorithm can handle much more general contours, as we illustrate in \S\ref{sec:Generalisations}.

\subsubsection{Cuspoid integrals}
\label{sec:Cuspoid} 
 
The so-called ``cuspoid integrals'' listed in \cite[\S36.2.4]{DLMF} are all of the form \eqref{eq:I} with polynomial phase $g$ and unit amplitude $f\equiv 1$, unit frequency $\omega=1$, and integration along the real line. Our algorithm is ideally suited to the evaluation of these integrals, and to demonstrate this we compute two of them. In the notation of \cite[\S36]{DLMF}, we consider the cusp catastrophe integral
\begin{align}
\label{eq:Pearcey}
\Psi_2(x,y) = P(y,x) = \int_{-\infty}^\infty \ee^{\ii (t^4+yt^2 + x t)}\,\dd t,
\end{align}
where $P$ is the Pearcey function, 
and the swallowtail catastrophe integral
\begin{align}
\label{eq:Swallowtail}
\Psi_3(x,y,z) = \int_{-\infty}^\infty \ee^{\ii (t^5+zt^3 + yt^2 + x t)}\,\dd t.
\end{align}
Both exhibit coalescence of stationary points on certain algebraic varieties (see \cite[\S36.5(ii)]{DLMF}) on which both the first and second derivatives of the phase function vanish. 
In the case of \eqref{eq:Pearcey} this occurs when
\begin{align}
\label{eq:PearceyCoalescence}
y = -\frac{3}{2}|x|^{2/3},
\end{align}
and for \eqref{eq:Swallowtail} this occurs when
\begin{align}
\label{eq:SwallowtailCoalescence}
400x^3-360x^2z^2-135y^4-27y^2z^3 + 540xy^2z + 81xz^4 = 0.
\end{align}

\begin{figure}[t!]
\subfloat[$|\Psi_2(x,y)|$]{\includegraphics[width=.5\linewidth]{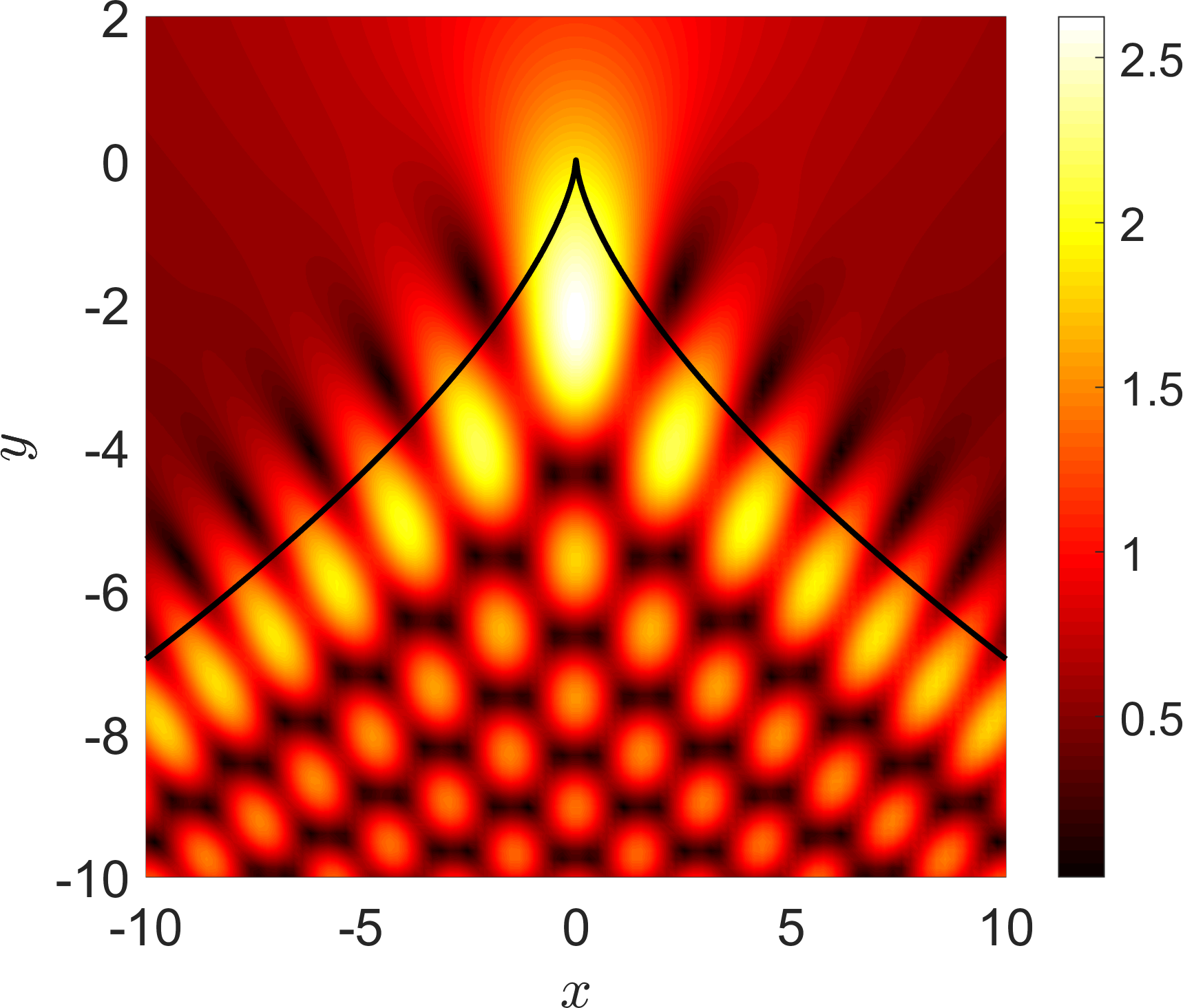}}
\hspace{2mm}\subfloat[$|\Psi_3(x,y,-7.5)|$]{\includegraphics[width=.5\linewidth]{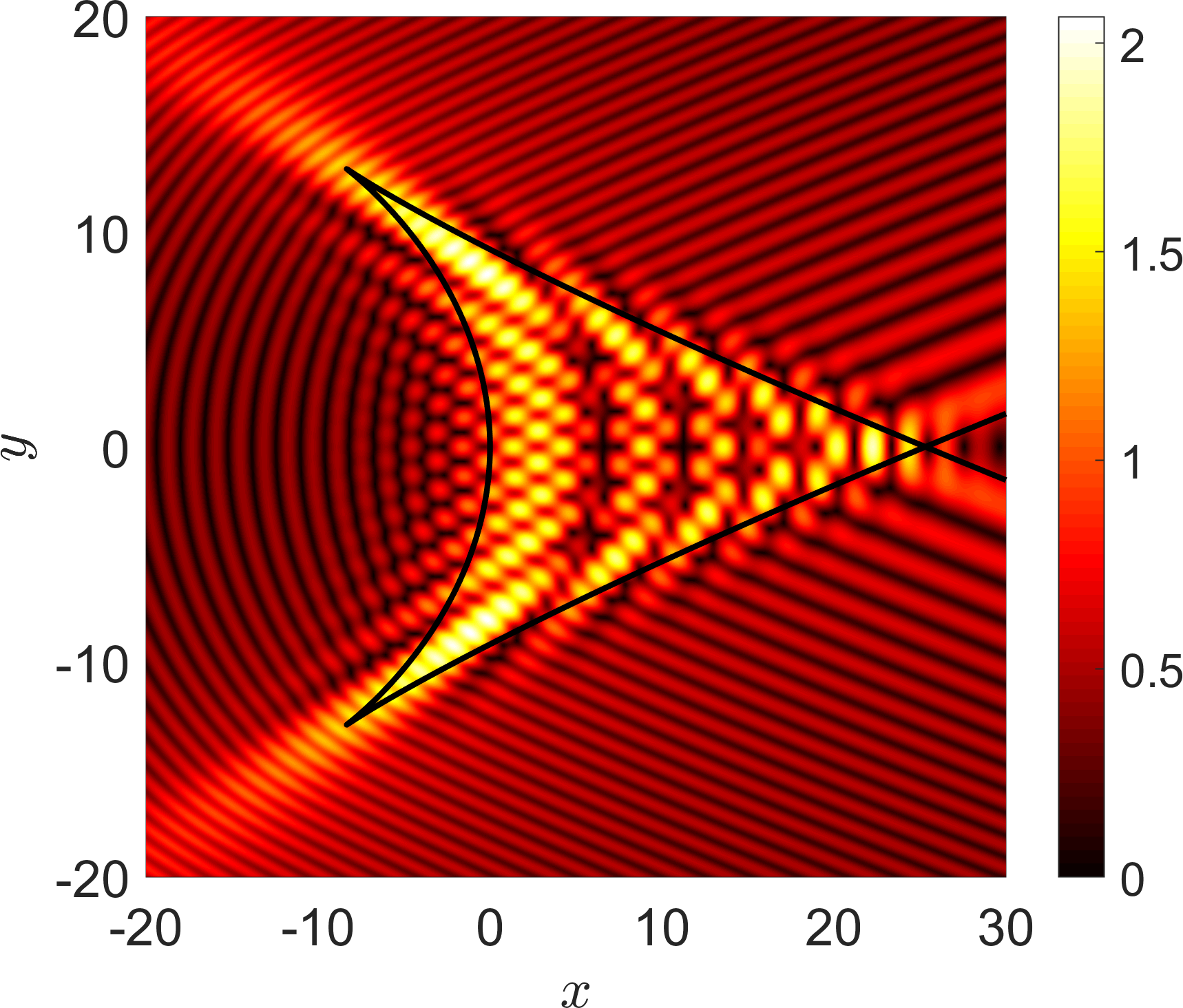}}
\caption{Magnitude plots of \eqref{eq:Pearcey} and \eqref{eq:Swallowtail}, with coalescence curves \eqref{eq:PearceyCoalescence} and \eqref{eq:SwallowtailCoalescence} (the latter with $z=-7.5$) superimposed in black. 
The computational grid was of size $100\times100$ for (a) and $500\times500$ for (b).}
\label{fig:DLMF}
\end{figure}

The integrals \eqref{eq:Pearcey} and \eqref{eq:Swallowtail} can be computed in Pathfinder via the commands
\begin{align*}
&\texttt{Psi2 = @(x,y) PathFinder(pi,0,[],[1 0 y x 0],...}\\
& \qquad \qquad\qquad\qquad \qquad \qquad\qquad \texttt{1,N,\textquotesingle infcontour\textquotesingle,[true true])}\\
&\texttt{Psi3 = @(x,y,z) PathFinder(pi,0,[],[1 0 z y x 0],...}\\
& \qquad \qquad\qquad\qquad \qquad \qquad\qquad \texttt{1,N,\textquotesingle infcontour\textquotesingle,[true true])}
\end{align*}
Figure \ref{fig:DLMF} shows plots of the magnitude of \eqref{eq:Pearcey} and \eqref{eq:Swallowtail} (the latter over the plane $z=-7.5$), computed using PathFinder with the default settings and $N=50$. The plots agree qualitatively with those presented in \cite[Figs 36.3.1 \& 36.6.5]{DLMF}, and, for \eqref{eq:Pearcey}, agree quantitatively (to all five decimal places presented) with the values presented in \cite[Table 1]{KiCoHo:00}. 
Computation times on a small desktop computer (Intel i7-4790, 32GB RAM) were less than a minute for the cusp (which required the computation of 10000 instances of \eqref{eq:Pearcey}, averaging $0.005$s per instance) and less than an hour for the swallowtail (which required 250000 instances of \eqref{eq:Swallowtail}, averaging $0.01$s per instance). 

\subsubsection{Generalisations}
\label{sec:Generalisations}

In \cite{HeOcSm:19} the authors considered a family of generalisations of certain canonical cuspoid integrals, with integration no longer over the real line, but rather over a complex contour starting and ending at valleys at infinity, and possibly with a non-unit amplitude function. 

A specific aim of \cite{HeOcSm:19} was to investigate the relevance of such integrals to the study of the so-called ``inflection point problem'', a canonical problem in wave scattering originally introduced over 50 years ago by Popov in \cite{Popov79}. 
This problem, which remains unsolved in closed form, concerns two-dimensional time-harmonic wave propagation near a boundary with an inflection point, and seeks a solution for the wave field near the inflection point that describes the transition from an incoming ``whispering gallery wave'' supported on the concave portion of the boundary, to outgoing ``creeping waves'' along the convex portion of the boundary, along with a scattered ``searchlight'' beam (for details and further references see \cite{smyshlyaev2022searchlight}).

In this context, in \cite[\S3.3]{HeOcSm:19} the authors studied the family of integrals 
\begin{align}
\label{eq:AijDef}
A_{ij}(x,y) = 
\int_{\Gamma_{ij}} f(t) \ee^{\ii(2t^5/5 - xt^4/2 -yt^2)}\,\dd t,
\end{align}
where $f(t)$ is some amplitude to be specified, and $\Gamma_{ij}$
denotes any contour from valley $v_i$ to valley $v_j$, where $v_j:=(2(j-1)+1/2)\pi/5$, $j=1,\ldots,5$. 
These integrals have stationary point coalescence on the cubic curve $y+4x^3/27=0$, which suggests that, by appropriately choosing $f$ and $\Gamma_{ij}$, they might exhibit certain features of the solution of the inflection point problem. Indeed, in \cite[\S4]{HeOcSm:19} it was shown that as $x\to-\infty$ near the cubic curve, the integral $A_{32}$ has the character of an incoming whispering gallery type wave, and that, as $x\to+\infty$ near the cubic curve, the integral $A_{52}$ has the character of an outgoing creeping wave. However, plots of the resulting fields could not be presented in \cite{HeOcSm:19} due to the lack of a suitable numerical evaluation method and implementation. 

\begin{figure}[t!]
\subfloat[$|A_{32}(x,y)|$]{\includegraphics[width=0.45\linewidth]{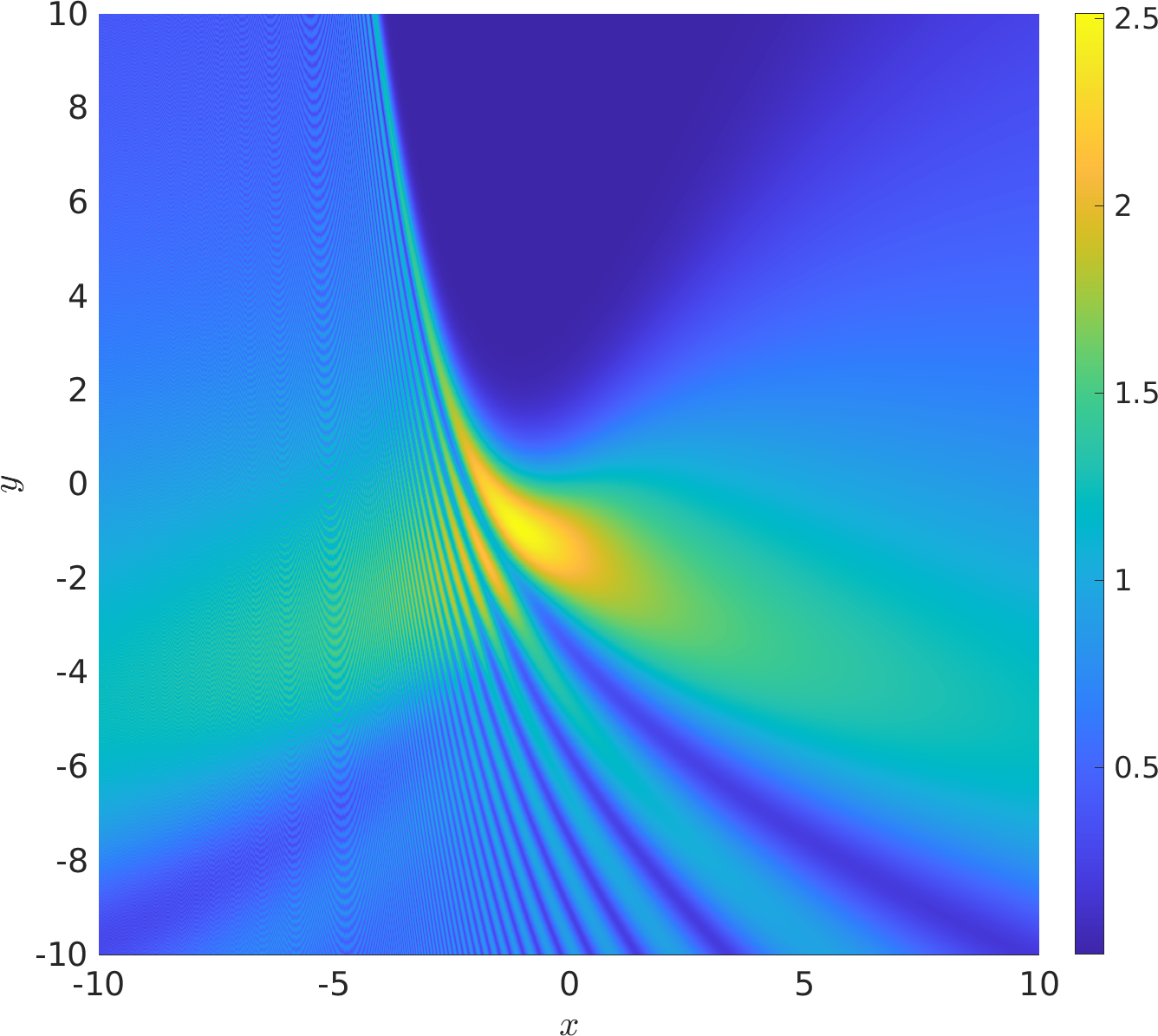}}
\hspace{2mm}
\subfloat[$|A_{52}(x,y)|$]{\includegraphics[width=0.45\linewidth]{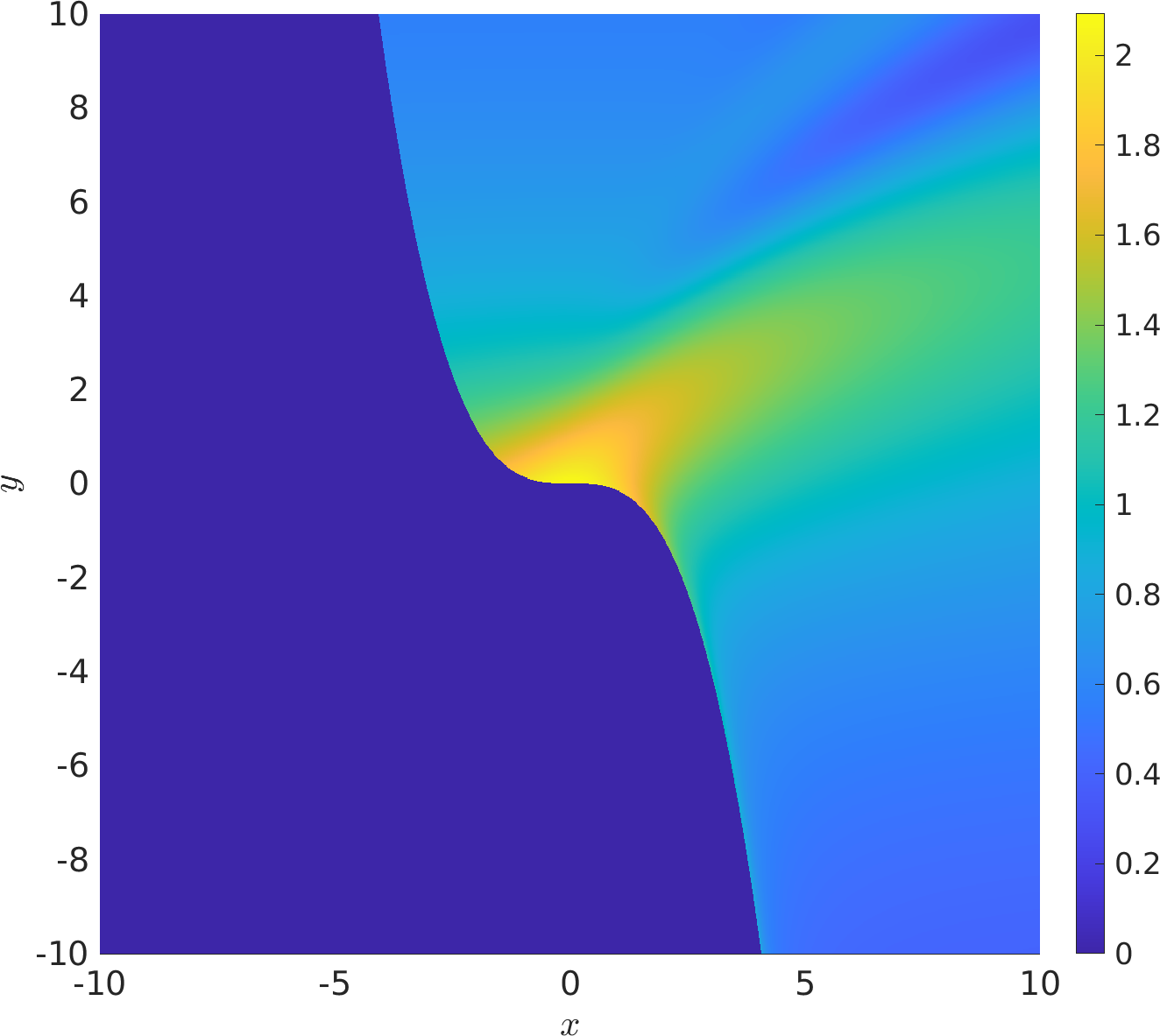}}\\
\subfloat[{$\Re[A_{32}(k^{1/5}x_0,k^{3/5}y_0)\ee^{\ii k x_0}]$}]{\includegraphics[width=\linewidth]{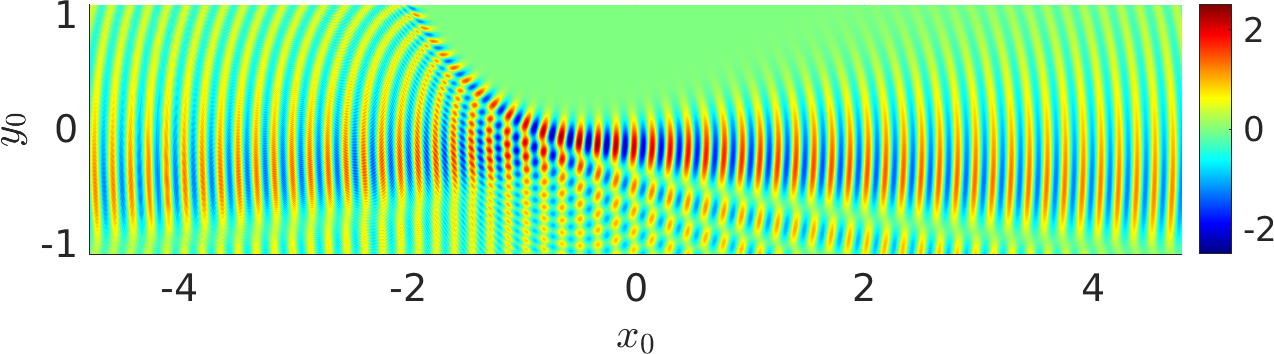}}\\
\subfloat[{$\Re[A_{52}(k^{1/5}x_0,k^{3/5}y_0)\ee^{\ii k x_0}]$}]{\includegraphics[width=\linewidth]{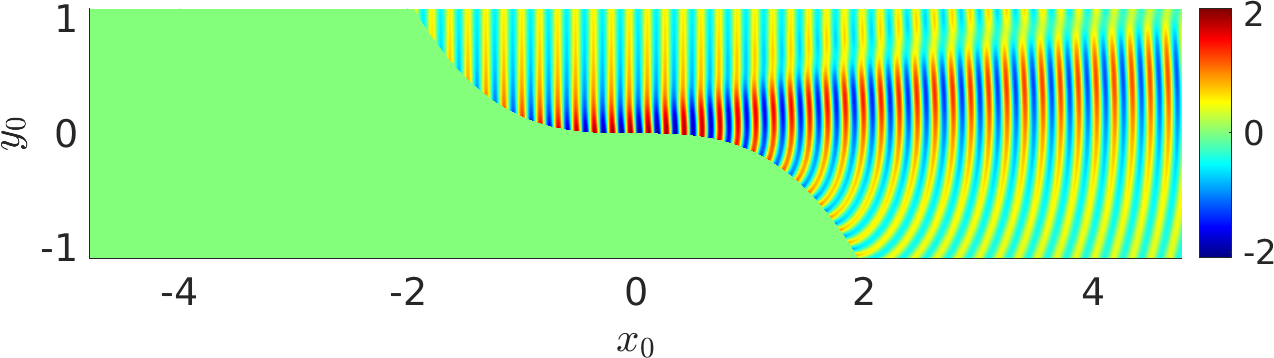}}
\caption{Plots of \eqref{eq:AijDef} with $f\equiv 1$, along with the associated approximate Helmholtz equation solutions for $k=40$.}
\label{fig:PWE}
\end{figure}

Using PathFinder we are able to remedy this. In Figures \ref{fig:PWE}(a) and \ref{fig:PWE}(b) we provide plots of the magnitude of $A_{32}$ and $A_{52}$ with $f\equiv 1$. To evaluate the integrals we used the PathFinder code
\begin{align*}
&\texttt{A32 = @(x,y) PathFinder(9*pi/10,pi/2,[],[2/5 -x/2 0 -y 0 0],...}\\
& \qquad \qquad\qquad\qquad \qquad \qquad\qquad \texttt{1,N,\textquotesingle infcontour\textquotesingle,[true true])}\\
&\texttt{A52 = @(x,y) PathFinder(17*pi/10,pi/2,[],[2/5 -x/2 0 -y 0 0],...}\\
& \qquad \qquad\qquad\qquad \qquad \qquad\qquad \texttt{1,N,\textquotesingle infcontour\textquotesingle,[true true])}
\end{align*}
We only plot $A_{52}$ above the cubic curve $y+4x^3/27=0$, because below this curve $A_{52}$ becomes exponentially large (cf.\ \cite[Fig.~12(i)]{HeOcSm:19}). In Figures \ref{fig:PWE}(c) and \ref{fig:PWE}(d) we present corresponding plots of the modulated plane wave
\[u(x_0,y_0) = A_{ij}(x,y) \ee^{\ii k x_0},\] 
where $(x_0,y_0)$ are outer variables, related to the inner variables $(x,y)$ by $x=k^{1/5}x_0$, $y=k^{3/5}y_0$, which is an asymptotic solution of the Helmholtz equation $\Delta u + k^2 u = 0$ as $k\to\infty$ in the region $x_0=O(k^{-1/5})$, $y_0=O(k^{-3/5})$ \cite[\S1]{HeOcSm:19}. 
Here one observes the predicted incoming whispering gallery type behaviour of $A_{32}$ near the top of Figure \ref{fig:PWE}(c) between $x_0=-2$ and $x_0=-1$, with oscillations giving way to an exponentially small field in the caustic shadow, and the predicted creeping wave type behaviour of $A_{52}$ near the bottom of Figure \ref{fig:PWE}(d) between $x_0=1$ and $x_0=2$, with waves propagating along the cubic curve, shedding rays tangentially.

In ongoing and future studies we plan to use PathFinder to further investigate the properties of integrals of the form \eqref{eq:AijDef}, and generalisations involving different choices of $f$ and higher degree phase functions (see \cite{HeOcSm:19}), 
which we hope may shed new light on the inflection point problem and related problems in high frequency wave propagation. 

\section*{Acknowledgements}
The authors acknowledge support from KU Leuven project C14/15/055 (AG) and EPSRC projects EP/S01375X/1 and EP/V053868/1 (DPH and AG). We are grateful to Alex Townsend, Nick Trefethyn, Marcus Webb, Sheehan Olver and Samuel Groth for helpful discussions in relation to this project.

\bibliographystyle{siam}
\bibliography{refs}

\end{document}